\newtheorem{theorem}{Theorem}
\newtheorem*{acknowledgement*}{Acknowledgement}
\newtheorem{definition}[theorem]{Definition}
\newtheorem{lemma}[theorem]{Lemma}
\newtheorem{proposition}[theorem]{Proposition}
\newtheorem{remark}[theorem]{Remark}
\def\R{\mathbb{R}}
\def\N{\mathbb{N}}
\DeclareMathOperator*{\argmax}{argmax}
\DeclareMathOperator{\E}{\mathbb E}
\newcommand{\kom}[1]{}
\renewcommand{\kom}[1]{{\bf [#1]}}
\newcommand{\ubar}[1]{\underaccent{\bar}{#1}}
\newcounter{komcounter}
\numberwithin{komcounter}{section}
\title{A controller-stopper-game with hidden controller type}
\author{
 Andi Bodnariu\\Department of Mathematics, Stockholm University\\ 
\\
Kristoffer Lindensjö\\Department of Mathematics, Stockholm University
}
\begin{document}

\maketitle

\textbf{Abstract:} 
We consider a continuous time stochastic dynamic game between a stopper (Player $1$, the \textit{owner} of an asset yielding an income) and a controller (Player $2$, the \textit{manager} of the asset), where the manager is either effective or non-effective. An effective manager can choose to exert low or high effort which corresponds to a high or a low positive drift for the accumulated income of the owner with random noise in terms of Brownian motion; where high effort comes at a cost for the manager. The manager earns a salary until the game is stopped by the owner, after which also no income is earned. A non-effective manager cannot act but still receives a salary. For this game we study (Nash) equilibria using stochastic filtering methods; in particular, in equilibrium the manager controls the learning rate (regarding the manager type) of the owner. First, we consider a strong formulation of the game  which requires restrictive assumptions for the admissible controls, and find an equilibrium of (double) threshold type. Second, we consider a weak formulation, where a general set of admissible controls is considered. We show that the threshold equilibrium of the strong formulation is also an equilibrium in the weak formulation.

\section{Introduction}\label{sec:intro}
We consider a continuous time two player stochastic game between a 
stopper (Player 1) and a controller (Player 2). 
The controlled process $(X_t)$ is given by 
\begin{align} \label{control-process}
X_t=\int_0^t\left(\theta\lambda_s-c\right)ds+W_t,
\end{align}
where 
$(\lambda_t)$ is a process chosen by the controller, 
$(W_t)$ is a Brownian motion, 
$\theta$ is an independent Bernoulli random variable with $\mathbb{P}(\theta=1) = 1-\mathbb{P}(\theta=0) =p \in (0,1)$ indicating whether the controller is  effective (or active) or not,
and $c > 0$ is a constant. 
On the other, based on the observations of $(X_t)$, the stopper selects a stopping time $\tau$ at which the game ends. 

For a given stopping-control strategy pair $(\tau,(\lambda_t))$ the reward of the 
stopper is 
\begin{align} \label{reward-employer}
\mathcal{J}^1\left(\tau,(\lambda_t),p\right)=\E\left[\int_0^\tau e^{-rs}\left(\theta \lambda_s-c\right)ds\right]
\end{align}
and the reward of the controller is 
\begin{align} \label{reward-employee}
\mathcal{J}^2\left(\tau,(\lambda_t),p\right)=\E\left[\int_0^\tau e^{-rs}\left(c-(\lambda_s-\ubar{\lambda})^2\right)ds\Bigg|\theta=1\right],
\end{align}
where $r>$ is a constant (discount rate). 

The control process values are restricted to take one of two constants $\{\bar{\lambda},\ubar{\lambda}\}$ at each time $t$, where we assume that $\bar{\lambda}>\ubar{\lambda} > c > 0$. 
The model is further specified in Sections \ref{Strong_sol_approach} and \ref{sec:weak} where we also define notions of (Nash) equilibria corresponding to both players wishing to maximize their respective rewards.

The interpretation is that $(X_t)$ is the accumulated income of Player $1$ who wants to maximize the discounted accumulated income by selecting a time $\tau$ at which the game ends. This income has a positive drift if $\theta(\omega)=1$ and a negative drift if $\theta(\omega)=0$; however, the outcome of $\theta$ cannot be observed by Player $1$ who must make the stopping decision based only on observations of $(X_t)$. The stopping decision will in equilibrium, as we will show, be made based on the probability that Player $1$ assigns to the event $\{\theta=1\}$, which is dynamically updated based on the observations of $(X_t)$. 

On the other hand, an active Player $2$, i.e., in case $\theta(\omega)=1$, can affect the drift of the income $(X_t)$ by dynamically selecting the effort level, i.e., 
$(\lambda_t)$, and thereby, as we shall see, affect the probability that Player $1$ assigns to the event $\{\theta=1\}$. 
The accumulated income of an active Player $2$ is based on the constant income rate $c$ minus the cost rate $(\lambda_t-\ubar{\lambda})^2$ which is zero when effort is low and positive otherwise; cf. \eqref{reward-employee}. Moreover, an active Player $2$ wants to dynamically select the effort level $(\lambda_t)$ in order to maximize the discounted accumulated income of Player $2$ until Player $1$ ends the game. Player $2$ must therefore consider the trade-off between exerting a large effort, which is costly, and a small effort, which implies no cost but decreases the probability that Player $1$ assigns to $\{\theta=1\}$ compared to the large effort. An inactive Player $2$ cannot act at all.

In line with the interpretation above, our ansatz to this problem is to use stochastic filtering methods to search for an equilibrium which depends on the conditional probability of the event $\{\theta=1\}$ based on the observations of $(X_t)$, which corresponds to Player 1's continuously updated belief about Player 2 being active. 
Indeed, we find such an equilibrium of (double) threshold type meaning that we find two thresholds $0<b_1^*<b_2^*<1$ such that an equilibrium is that 
Player $2$ exerts the smaller effort $\ubar{\lambda}$ when the conditional probability of $\{\theta=1\}$ is above $b_2^*$ and the larger 
effort $\bar{\lambda}$ when the conditional probability is below $b_2^*$, 
and Player $1$ stops the game whenever the conditional probability of $\{\theta=1\}$ falls below $b_1^*$; see Remark \ref{rem:threshold.inter} for details.

We study this game in an increasing order of generality regarding the set of admissible control strategies. First using a \textit{strong formulation} and second using a \textit{weak formulation} of the game. The same threshold equilibrium is obtained in both formulations.

\begin{itemize}
\item \textbf{Strong formulation}: 
In the strong formulation we admit control strategies only of Markovian type in the sense that $\lambda_t = \lambda(P_t)$, where $\lambda:[0,1]\rightarrow\{\bar{\lambda},\ubar{\lambda}\}$ (a deterministic function) and $(P_t)$ is defined as a process which in equilibrium coincides with the conditional probability that the stopper assigns to $\{\theta=1\}$. The process $(P_t)$ is here the strong solution to a particular stochastic differential equation; see the beginning of Section \ref{Strong_sol_approach} for details. 
In this formulation the main results are: 
(i) we provide a verification theorem for a double threshold equilibrium, 
and (ii) we prove that a double threshold equilibrium exists under certain parameter restrictions.

\item \textbf{Weak formulation}: In the weak formulation, admissible control strategies correspond to a general set of stochastic processes adapted to a filtration generated by $(X_t)$ taking values in $\{\ubar{\lambda},\bar{\lambda}\}$. Here, however, we start by defining $(X_t)$ as a Brownian motion and we achieve  a controlled process analogous to the one in \eqref{control-process} by means of a measure change, with which we define the reward functions and a corresponding equilibrium; see Section \ref{sec:weak} for details. The main result is that the double threshold equilibrium found in the strong formulation is also an equilibrium in the weak formulation, i.e., when allowing a larger set of admissible control strategies.

\end{itemize}
In Section \ref {prevlit} we survey related previous literature and clarify the contribution of the present paper. 
In Section \ref{filter_theory} we present stochastic filtering arguments which are relevant to the subsequent sections.
The strong formulation of our game is studied in Section \ref{Strong_sol_approach}. 
In particular, the beginning of 
Section \ref{Strong_sol_approach} specifies the strong formulation further, 
Section \ref{sec:search-NE} contains a heuristic derivation of an equilibrium candidate, 
Section \ref{sec:ver-thm} reports the verification result, and 
Section \ref{sec:existNE} reports the equilibrium existence result. The weak formulation is studied in Section \ref{sec:weak}.

\subsection{Previous literature and contribution}\label{prevlit}
The problem studied in the present paper belongs to a new class of dynamic stochastic control and stopping games with the key feature being that the player's may be \text{ghosts} 
(cf. \cite{de2020playing}) in the sense that a player does not necessarily exist, or equivalently is not activity, or not effective.  
This ghost feature was first studied in \cite{de2020playing}, where a two-player stopping game is studied and the term ghost was introduced. 
In \cite{ekstrom2022detect}, a controller-stopper-game where the stopper faces unknown competition in the form of a ghost controller is studied, in the context of a fraud detection application. 
In \cite{ekstrom2022finetti}, a de Finetti controller-stopper-game (of resource extraction) where the controller faces unknown competition in the form of a stopper ghost with the option to extract all the remaining resources instantaneously is studied.

From a game theoretic interpretation our main contribution is that our game is a non-zero-sum game where the player objectives agree in the sense that both players would benefit if the hidden controller were revealed (in the case of an active Player $2$). In this sense, both players are not exactly competing against each other, but rather aiming on finding an agreement that would benefit both. Typically, such situations are complicated since it makes existence of (non-trivial) Nash equilibria sensitive to the specific player payoffs.  
This stands in contrast to previously studied games of these type, see e.g., \cite{ekstrom2022detect} where the profit of one player is an immediate loss for the other, which results in opposite player objectives in the sense that the controller aims at staying hidden, which is the opposite to our situation.

From a technical view-point, our main contribution is twofold. 
First, we constrain the control process to take values in a finite set, i.e., $\{\bar{\lambda},\ubar{\lambda}\}$. 
This means that we can interpret the problem  of the controller as an optimal switching problem without a cost for switching, implying that switching (between the two control values $\{\bar{\lambda},\ubar{\lambda}\}$) may occur infinitely often, which stands in contrast to the usual formulation of optimal switching problems; see e.g., \cite{olofsson2021management} and the references therein. 
Second, we consider a weak formulation for these types of games, based on defining the state process $(X_t)$ as a Brownian motion, and the reward functions in terms of measure changes. 
Then we show that the Nash equilibrium in Markovian strategies (i.e., in the strong formulation), is also a Nash equilibrium in the weak formulation. 

This weak approach is inspired by \cite{erik2022}, which formulates a weak approach in the study of a sequential estimation problem, where the optimizer can choose a bounded control representing the rate at which the information is received and a stopping time at which the experiment ends, in particular, \cite{erik2022} considers an optimization problem and not a game. Weak solution approaches to dynamic stochastic games have previously been considered in a variety of recent papers; 
see e.g., \cite{doi:10.1137/120894907}, and \cite{possamai2020zero} which contain surveys of the related literature.  


In a broader context, the problem studied in the present paper can be regarded as a controller-stopper-game under incomplete information. 
Controller-stopper-games were first studied for zero-sum games. In \cite{karatzas2001controller} a zero-sum game between a controller and a stopper is studied for a one-dimensional diffusion, 
whereas \cite{bayraktar2013multidimensional} considers the game in a multidimensional setting. In \cite{nutz2015optimal} a zero-sum game between a stopper and a controller choosing a probability measure is studied. Singular controls for zero-sum controller-stopper-games were studied in \cite{hernandez2015zero} for a one-dimensional diffusion and in \cite{bovo2022variational,bovo2023zero} for the multidimensional setting. In \cite{hernandez2015games} zero-sum controller-stopper-games with singular control are studied for a spectrally one-sided Lévy process. A zero-sum game between a stopper and a player controlling the jumps of the state process is studied in \cite{bayraktar2019controller}. 

Stochastic games under asymmetric information were first considered in \cite{cardaliaguet2009stochastic}, which considers a zero-sum stochastic differential game between two controllers. In \cite{cardaliaguet2013pathwise} path-wise non-adaptive controls are studied for a zero-sum game between two controllers. 
An asymmetric information Dynkin game with a random expiry time observed by one of the players is studied in \cite{lempa2013dynkin}. 
In \cite{gensbittel2018two} a two-player zero-sum game under asymmetric information is considered where only one player can observe the underlying Brownian motion, while the second the player only observes the strategy chosen by the first player. A zero-sum game where both players observe different processes is studied in \cite{gensbittel2019zero}. 
Non-Markovian zero-sum games under partial information are also considered; see e.g., \cite{de2022value}.

For a background regarding the interpretation of our game as a dynamic signaling game between an owner (Player 1, the stopper) and a manager (Player 2, the controller) see \cite{ekstrom2023hiring} and the references therein.

\subsection{The underlying stochastic filtering theory arguments}\label{filter_theory}
The present section contains a brief account of the stochastic filtering arguments that underlie the analysis of the present paper. 
The section is included as an informal and heuristic precursor for content of the subsequent sections. 
A formal result in the direction of this section is Proposition \ref{filter_prop}.  

Let us first consider the perspective of the stopper. Assuming that the controller uses a control strategy $(\lambda^*_t)$ we obtain---using standard filtering theory; see e.g., \cite[Chapter 8.1]{liptser2001statistics}---that the innovations process defined by 
\begin{align*} 
\hat{W}_t=X_t+ct-\int_0^t\E[\lambda^*_s\theta|\mathcal{F}^X_s]ds
\end{align*}
is a Brownian motion with respect to $((\mathcal{F}^X_t),\mathbb{P})$, 
where $(\mathcal{F}^X_t)$ is defined as the smallest right-continuous filtration to which $(X_t)$ is adapted.

Relying again on basic filtering theory, and arguments similar to those in \cite[Section 2.1]{ekstrom2022detect}, we find that if the strategy $(\lambda^*_t)$ is  $(\mathcal{F}^{X}_t)$-adapted---we shall later see that an equilibrium with this property can indeed be found---then the conditional probability (process) that the stopper assigns to the controller being active, i.e., $\mathbb{P}(\theta=1|\mathcal{F}^X_t)=\mathbb{E}[\theta|\mathcal{F}^X_t], t \geq 0$ is given by the 
stochastic differential equation (SDE)
\begin{align}\label{P-eq-equlibrium}
dP_t=\lambda^*_tP_t(1-P_t)d\hat{W}_t, \enskip P_0=p.
\end{align}
Note that the observations above rely implicitly on the assumption that the control strategy $(\lambda^*_t)$ is fixed in the sense 
that the stopper knows which process ($\lambda^*_t$) that the controller uses. However, in order to verify that a candidate equilibrium strategy $(\lambda^*_t)$  is indeed an equilibrium strategy (cf. Definition \ref{def:NE} below) we must be able to analyze what happens to an equilibrium stopping strategy---which as we shall see will be determined as a threshold time in terms of the conditional probability process---when the controller deviates from the candidate equilibrium strategy. 

To this end observe that if we consider an $(\mathcal{F}^{X}_t)$-adapted candidate equilibrium strategy $(\lambda^*_t)$ and an arbitrary admissible deviation (control) strategy $(\lambda_t)$, and now \textit{define} a process $(P_t)$ to be given by
\begin{align}
dP_t =\lambda^*_tP_t(1-P_t)(\theta\lambda_t-\lambda^*_tP_t)dt+\lambda^*_tP_t(1-P_t)dW_t, \enskip P_0 = p, \label{P_eq-deviation}
\end{align}
then $(P_t)$ depends, of course, on the equilibrium candidate $(\lambda^*_t)$ as well as the deviation strategy $(\lambda_t)$. However, using the observations above it is also directly verified that $P_t = \mathbb{P}(\theta=1|\mathcal{F}^X_t), t \geq 0$ in the special case of no deviation (i.e., with $(\lambda^*_t)=(\lambda_t)$). In other words, $(P_t)$ defined as in \eqref{P_eq-deviation} coincides with the conditional probability process in the case of no deviation, but it also tells us how the controller affects $(P_t)$ in the case of deviation, and we may therefore, as we will see, use this definition of $(P_t)$ to find an equilibrium.

\section{Strong formulation}\label{Strong_sol_approach}
Let $(\Omega,\mathcal{F},\mathbb{P})$ be a probability space supporting the standard one-dimensional Brownian motion $(W_t)$ and the independent Bernoulli random variable $\theta$, where we recall that $\mathbb{P}(\theta=1) = 1- \mathbb{P}(\theta=1)=p \in (0,1)$.

Observe that if both the candidate equilibrium strategy and the deviation strategy in \eqref{P_eq-deviation} are of Markov control type, 
specifically in the sense  that
$\lambda^*_t=\lambda^*(P_t)$ and $\lambda_t=\lambda(P_t)$ where 
$\lambda^*,\lambda:[0,1]\rightarrow\{\bar{\lambda},\ubar{\lambda}\}$, then $P_t = P^{\lambda,\lambda^*}_t$ in \eqref{P_eq-deviation} will be given by the SDE
\begin{align}
dP_t=\lambda^*(P_t)P_t(1-P_t)(\theta\lambda(P_t)-\lambda^*(P_t)P_t)dt+\lambda^*(P_t)P_t(1-P_t)dW_t, \enskip P_0=p. \label{P_def_strong_sol}
\end{align}
(Depending on the context we will, to ease notation, sometimes write $P_t$ and sometimes write $P_t^{\lambda,\lambda^*}$.)

In the present section we will restrict the set of admissible control strategies to be of Markov control type. 
Recall  that Section \ref{sec:weak} contains a weak formulation of our game where we relax the notion of admissible strategies to be a set of general stochastic processes (taking values in $\{\bar{\lambda},\ubar{\lambda}\}$). By restricting to  Markov controls we ensure that $(P_t)$ is obtained as the strong solution to  \eqref{P_def_strong_sol}; see Proposition \ref{prop_strong_ex} in Appendix \ref{app:SDE}.
Furthermore, using the definition of $(X_t)$ in \eqref{control-process} as well as \eqref{P_def_strong_sol} we note that the dynamics of $(P_t)$ can be written as
\begin{align}\label{XP_SDE}
dP_t&=\lambda^*(P_t)P_t(1-P_t)(c-\lambda^*(P_t)P_t)dt+\lambda^*(P_t)P_t(1-P_t)dX_t, \enskip P_0=p,
\end{align} 
and that $(P_t)$ is $(\mathcal{F}^X_t)$-adapted. 
Formally, we restrict the set of admissible control strategies to be of Markov control type by identifying an 
admissible control strategy $(\lambda_t)$ with a deterministic function 
$\lambda:[0,1]\rightarrow\{\bar{\lambda},\ubar{\lambda}\}$ according to $\lambda_t=\lambda(P_t)$, where $(P_t)$ is given by \eqref{P_def_strong_sol}, and where $\lambda$ satisfies the conditions of Definition \ref{def:admissibility} (which also defines the set of admissible stopping strategies). 

\begin{definition}[Admissibility in the strong formulation] \label{def:admissibility}
\enskip
\begin{itemize}
\item A Markov control (deterministic function) $\lambda:[0,1]\rightarrow \{\ubar{\lambda},\bar{\lambda}\}$ is said to be an admissible control strategy if it is RCLL (right-continuous with left hand limits). 
The set of admissible control strategies is denoted by $\mathbb{L}$. 

\item A stopping time $\tau$ is said to be an admissible stopping strategy if it is adapted to $(\mathcal{F}^X_t)$. 
The set of admissible stopping strategies is denoted by $\mathbb{T}$. 
 \end{itemize}
\end{definition}
To clarify, a control process $(\lambda_t)$ is obtained by
\begin{align*}
\lambda_t =\lambda(P^{\lambda,\lambda^*}_t), \enskip t \geq 0, 
\end{align*}
where $P^{\lambda,\lambda^*}_t = P_t$, with $(\lambda,\lambda^*)\in \mathbb{L}^2$, is given by \eqref{P_def_strong_sol}; i.e., 
a control process $(\lambda_t)$ depends generally on a \textit{pair} of admissible strategies 
$(\lambda^*,\lambda)\in \mathbb{L}^2$ which represents the candidate equilibrium strategy $\lambda^*$ and the deviation strategy $\lambda$, respectively.

In line with Section \ref{sec:intro}, both players want to maximize their respective rewards and we define our Nash equilibrium accordingly.
\begin{definition}[Nash equilibrium] \label{def:NE}
 A pair of admissible strategies $(\tau^*,\lambda^*) \in \mathbb{T} \times \mathbb{L}$ is a said to be a Nash equilibrium if 
the corresponding rewards, \eqref{reward-employer}--\eqref{reward-employee}, satisfy
\begin{align}
\begin{cases}\label{NE_cond}
\mathcal{J}^1\left(\tau^*,(\lambda^*(P^{\lambda^*,\lambda^*}_t)),p\right)\geq \mathcal{J}^1\left(\tau,(\lambda^*(P_t^{\lambda^*,\lambda^*})),p\right),\\
\mathcal{J}^2\left(\tau^*,(\lambda^*(P_t^{\lambda^*,\lambda^*})),p\right)\geq \mathcal{J}^2\left(\tau^*,(\lambda(P_t^{\lambda,\lambda^*})),p\right),
\end{cases}
\end{align}
for any pair of deviation strategies $(\tau,\lambda) \in \mathbb{T} \times \mathbb{L} $. 
\end{definition}

\begin{remark}  
In line with the usual interpretation of a Nash equilibrium we note that the 
first condition in \eqref{NE_cond} implies that deviating from the equilibrium is sub-optimal for the stopper, 
and that the second condition implies the same for the controller. 
Note also that the appearance of the equilibrium control $\lambda^*$ in the right hand side of the second condition in \eqref{NE_cond} is due to the role that it plays for the determination of $P_t=P^{\lambda,\lambda^*}_t$ also when the controller deviates from the equilibrium, cf. \eqref{P_def_strong_sol}. 
\end{remark}

\begin{remark} 
A connection between our equilibrium definition and a fixed-point in a suitable best response mapping can be established.    
In fact we will use this connection when proving the equilibrium existence result Theorem \ref{thm_eq_cand_ex}. 
Let $(\tau^*,\lambda^*)$ be any given admissible strategy pair. 
Then we may, in line our equilibrium definition, define the (point-to-set) \textit{best response mapping} of the stopper as
\begin{align*}
\lambda^* \in \mathbb{L} \mapsto \argmax_{\tau\in\mathbb{T}} \mathcal{J}^1\left(\tau,\lambda^*(P^{\lambda^*,\lambda^*}),p\right),
\end{align*}
while the  (point-to-set) \textit{best response mapping} of the controller is given by
\begin{align*}
(\tau^*,\lambda^*) \in \mathbb{T}\times\mathbb{L} \mapsto \argmax_{\lambda\in\mathbb{L}} 
\mathcal{J}^2\left(\tau^*,\lambda(P^{\lambda,\lambda^*}),p\right).
\end{align*}
It is then immediately clear that our equilibrium definition corresponds to a fixed-point in the best response mapping
\begin{align*}
(\tau^*,\lambda^*) \in \mathbb{T}\times\mathbb{L}   \mapsto 
\left(
\argmax_{\tau\in\mathbb{T}} \mathcal{J}^1\left(\tau,\lambda(P^{\lambda^*,\lambda^*}),p\right),
\argmax_{\lambda\in\mathbb{L}} \mathcal{J}^2\left(\tau^*,\lambda(P^{\lambda,\lambda^*}),p\right)
\right).
\end{align*}
\end{remark}

In the following result we conclude this section by establishing that $(P_t)$ does indeed correspond to the conditional probability of an 
active controller, i.e., $\{\theta=1\}$,  in case the controller does not deviate from an equilibrium (candidate).

\begin{proposition}\label{filter_prop}
Let $\lambda^* \in \mathbb{L}$ be an arbitrary admissible control. Suppose $\lambda^* = \lambda$ in \eqref{P_def_strong_sol} and consider a constant $0 < T<\infty$.  
Then the solution $P_t = P_t^{\lambda^*,\lambda^*}$ to \eqref{P_def_strong_sol} satisfies a.s., for each $0\leq t\leq T$, 
\begin{align*}
P_t=\E[\theta|\mathcal{F}_t^X]
\end{align*}
and 
\begin{align}\label{P-SDE-for-innovation}
dP_t=\lambda^*_t(P_t)P_t(1-P_t)d\hat{W}_t,\quad P_0=p,
\end{align}
where  
\begin{align*}
\hat{W}_t=X_t+ct-\int_0^t\lambda^*(P_s)P_sds,
\end{align*}
is a Brownian motion w.r.t. $((\mathcal{F}^X_t),\mathbb{P})$.
\end{proposition}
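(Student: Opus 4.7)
The plan is to show $P_t = \Pi_t$ a.s.\ for each $t\in[0,T]$, where $\Pi_t:=\E[\theta\mid \mathcal{F}^X_t]$ denotes the true filter; once this identification is in hand, the stated formula for $\hat W$ and the SDE \eqref{P-SDE-for-innovation} follow immediately from the innovations-based filtering formalism. I would proceed in three steps: (i) establish $(\mathcal{F}^X_t)$-adaptedness of $(P_t)$, (ii) derive the Kushner--Stratonovich equation for $\Pi_t$, and (iii) use a Grönwall argument to conclude $P \equiv \Pi$.

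For (i), the identity $dW_t = dX_t - (\theta\lambda^*(P_t)-c)\,dt$ lets me rewrite \eqref{P_def_strong_sol} as \eqref{XP_SDE}, an SDE for $P$ driven only by $X$. Its coefficients vanish at $P\in\{0,1\}$ and are polynomial in $P$ up to the bounded factor $\lambda^*(P_t)$, so the strong solution remains in $[0,1]$, and pathwise uniqueness of \eqref{XP_SDE} (invoked via Proposition \ref{prop_strong_ex}) forces $(P_t)$ to be $(\mathcal{F}^X_t)$-adapted. In particular $\lambda^*(P_s)$ is $\mathcal{F}^X_s$-measurable, which is the standing hypothesis needed to apply the filtering results of \cite[Chapter 8]{liptser2001statistics}.

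For (ii), that theory applied to the observation $dX_t = (\theta\lambda^*(P_t)-c)\,dt + dW_t$ gives that $\tilde W_t := X_t + ct - \int_0^t \lambda^*(P_s)\Pi_s\,ds$ is a Brownian motion with respect to $((\mathcal{F}^X_t),\mathbb{P})$, and, using $\theta^2=\theta$ so that $\E[\theta^2\lambda^*(P_t)\mid\mathcal{F}^X_t]=\lambda^*(P_t)\Pi_t$, the Kushner--Stratonovich equation reduces to $d\Pi_t = \lambda^*(P_t)\Pi_t(1-\Pi_t)\,d\tilde W_t$ with $\Pi_0=p$.

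For (iii), I would substitute $dX_t = d\tilde W_t - c\,dt + \lambda^*(P_t)\Pi_t\,dt$ into \eqref{XP_SDE} to express $P$ in terms of $\tilde W$, obtaining a drift proportional to $(\Pi_t - P_t)$ and diffusion coefficient $\lambda^*(P_t)P_t(1-P_t)$. Setting $M_t := P_t - \Pi_t$ and applying Itô to $M_t^2$, the Lipschitz estimate $|y(1-y)-z(1-z)|\le|y-z|$ on $[0,1]$, the bound $|\lambda^*|\le\bar\lambda$, and the fact that the drift contribution to $dM_t^2$ is non-positive yield $\E[M_t^2] \le \bar\lambda^2 \int_0^t \E[M_s^2]\,ds$. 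Grönwall then gives $M\equiv 0$ on $[0,T]$, hence $\tilde W = \hat W$ and \eqref{P-SDE-for-innovation}. The main obstacle is step (i): because $\lambda^*$ is only RCLL and $\{\ubar\lambda,\bar\lambda\}$-valued, the outer $\lambda^*(P_t)$ in \eqref{XP_SDE} is discontinuous in $P$, so pathwise uniqueness of this $X$-driven SDE is not immediate and must be extracted from the specific structure of \eqref{XP_SDE} via Proposition \ref{prop_strong_ex}.
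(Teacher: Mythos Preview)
Your proposal is correct and follows essentially the same strategy as the paper: both arguments first note that $(P_t)$ is $(\mathcal{F}^X_t)$-adapted, then derive the Kushner--Stratonovich equation $d\Pi_t=\lambda^*(P_t)\Pi_t(1-\Pi_t)\,d\tilde W_t$ for the true filter, and finally identify $P$ with $\Pi$. The only cosmetic difference is in that last step: the paper rewrites the $\Pi$-equation in terms of the original Brownian motion $W$ (using $dX_t=(\theta\lambda^*(P_t)-c)\,dt+dW_t$), observes that $P$ and $\Pi$ then solve the \emph{same} SDE driven by $W$, and invokes pathwise uniqueness directly, whereas you carry out the uniqueness argument by hand via Grönwall on $\E[(P_t-\Pi_t)^2]$---which is of course just an explicit proof of that same pathwise uniqueness.
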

\begin{proof}
This proof is similar to that of \cite[Proposition 11]{ekstrom2022detect}. Define $\Pi_t=\mathbb{E}[\theta|\mathcal{F}_t^X]$. 
Then $\mathbb{E}[\theta\lambda^*(P_t)|\mathcal{F}_t^X]= \lambda^*(P_t)\Pi_t$, since $(P_t)$ is $(\mathcal{F}^X_t)$-adapted. 
Relying on standard filtering theory (see e.g., \cite[Chapter 8.1]{liptser2001statistics} and arguments similar to those in the proof of \cite[Proposition 11]{ekstrom2022detect}), 
it can now be seen, for $0\leq t\leq T$, that
\begin{align*}
d\Pi_t=\lambda^*(P_t)\Pi_t(1-\Pi_t)d\bar{W}_t,
\end{align*}
where
\begin{align*}
\bar{W}_t:=X_t+ct-\int_0^t\lambda^*(P_s)\Pi_sds
\end{align*}
is a Brownian motion with respect to $((\mathcal{F}^X_t),\mathbb{P})$. Hence, by the definition of $(X_t)$ in \eqref{control-process} it is directly seen that $(\Pi_t)$ satisfies the SDE
\begin{align*}
d\Pi_t=\lambda^*(P_t)\Pi_t(1-\Pi_t)(\theta\lambda^*(P_t)-\lambda^*(P_t)\Pi_t)dt+\lambda^*(P_t)\Pi_t(1-\Pi_t)dW_t, \enskip \Pi_0 = p.
\end{align*}
Recalling the definition of $(P_t)$ in \eqref{P_def_strong_sol}, we observe that $(P_t)$ and $(\Pi_t)$ are both strong solutions to the same SDE in case $\lambda^* = \lambda$. 
The results follow.
\end{proof}

\subsection{Searching for a threshold equilibrium}\label{sec:search-NE}
The aim of the present section is to search for an equilibrium of threshold type in the sense that the equilibrium strategy pair 
satisfies $(\tau^*,\lambda^*)=(\tau_{b_1^*},\lambda_{b_2^*})$ where 
\begin{align}
\tau_{b_1^*}:&=\inf\{t  \geq 0:P_t\leq b_1^*\}, \label{KL:wqf323avsf1-1}\\
p \rightarrow \lambda_{b_2^*}(p):&=\ubar{\lambda}+(\bar{\lambda}-\ubar{\lambda})I_{\{p< b^*_2\}},\label{KL:wqf323avsf1-2}
\end{align}
with $0<b_1^*<b_2^*<1$.

\begin{remark}\label{rem:threshold.inter}
The (double) threshold strategy pair defined by \eqref{KL:wqf323avsf1-1}--\eqref{KL:wqf323avsf1-2} corresponds to 
(i) stopping the first time that 
$(P_t)$---whose dynamics is in this case given by \eqref{P_def_strong_sol} with $\lambda(P_t) = \lambda^*(P_t)=\lambda_{b_2^*}(P_t)$---falls below $b^*_1$, and
(ii) the controller using the control process $(\lambda_{b_2^*}(P_t))$, which is equal to
the small controller rate $\ubar{\lambda}$ when $P_t\geq b_2^*$ and
the large controller rate $\bar{\lambda}$ when $P_t< b_2^*$. 
\end{remark}

We remark that the content of this section is mainly of motivational value and that a corresponding formal result is the verification theorem reported in Section \ref{sec:ver-thm}, below. 

\subsubsection{The perspective of the controller}\label{sec:controller-perspec}
Given a candidate equilibrium strategy $\lambda^*\in \mathbb{L}$ and supposing that the stopper uses a candidate equilibrium threshold strategy
of the kind \eqref{KL:wqf323avsf1-1} where $b_1^*\in (0,1)$, the controller faces the optimal control problem
\begin{align}\label{controller-value}
v(p,b_1^*):= \sup_{\lambda\in \mathbb{L}} \mathcal{J}^2\left(\tau_{b_1^*},\lambda(P^{\lambda,\lambda^*}_t),p\right),
\end{align} 
where we recall that $(P_t) = (P_t^{\lambda,\lambda^*})$ is given by \eqref{P_def_strong_sol}; however, 
due to the conditioning on $\theta = 1$ in the controller reward $\mathcal{J}^2$ (see \eqref{reward-employee}) we may here set $\theta = 1$ in \eqref{P_def_strong_sol}.

Indeed writing $v(p)=v(p,b_1^*)$ 
and relying on \eqref{reward-employee} with the underlying process $(P_t)$ in the representation \eqref{P_def_strong_sol} with $\theta=1$, 
we expect, using the usual dynamic programming arguments, that the optimal value $v(p)$ satisfies
\begin{align*}
\frac{\lambda^*(p)^2p^2(1-p)^2}{2}v_{pp}(p)
+\left(\lambda^*(p)p(1-p)\lambda -\lambda^*(p)^2p^2(1-p)\right) v_p(p)
-r v(p)+
c-(\lambda-\ubar{\lambda})^2\leq 0,
\end{align*}
for all $\lambda  \in \{\bar{\lambda},\ubar{\lambda}\}$ and $p \in (b_1^*,1)$, while equality should hold in case $\lambda^*(p)=\lambda$, i.e., 
\begin{align*}
\frac{\lambda^*(p)^2p^2(1-p)^2}{2}v_{pp}(p)
+\left(\lambda^*(p)p(1-p)\lambda^*(p) -\lambda^*(p)^2p^2(1-p)\right) v_p(p)
 \\ -r v(p) + c-(\lambda(p)^*-\ubar{\lambda})^2 = 0.
\end{align*}
We will from now on ease the presentation by sometimes writing e.g., $\lambda^*$ instead of $\lambda^*(p)$. 
By subtracting one of the two equations above from the other we obtain
\begin{align*}
(\lambda^*)^2p(1-p)v_p-(\lambda^*-\ubar{\lambda})^2-\lambda^*p(1-p)\lambda v_p+(\lambda-\ubar{\lambda})^2 \geq 0,
\end{align*}
which is equivalent to
\begin{align}\label{KL:1}
(\lambda^*-\lambda)\lambda^*p(1-p)v_p\geq (\lambda^*-\ubar{\lambda})^2-(\lambda-\ubar{\lambda})^2.
\end{align}
We conclude that if $(\tau_{b_1^*}, \lambda^*)$ is an equilibrium then $\lambda^*= \lambda^*(p)$ must satisfy \eqref{KL:1} 
for $\lambda  \in \{\bar{\lambda},\ubar{\lambda}\}$ and all $p \in ({b_1^*},1)$.

We now first consider the case $\lambda^*(p)=\ubar{\lambda}$ with the deviation $\lambda=\bar{\lambda}$ (if $\lambda=\ubar{\lambda}$, then \eqref{KL:1} trivially holds). 
In this case \eqref{KL:1}  becomes 
\begin{align*}
(\ubar{\lambda}-\bar{\lambda})\ubar{\lambda}p(1-p)v_p\geq-(\bar{\lambda}-\ubar{\lambda})^2,
\end{align*}
which is equivalent to
\begin{align}\label{eq-ubarlam}
p(1-p)v_p\leq \frac{\bar{\lambda}-\ubar{\lambda}}{\ubar{\lambda}}=:(A).
\end{align}
Supposing that $p(p-1)v_p$ is decreasing 
(this is under additional assumptions on the model parameters verified in Proposition \ref{dec_f}, below) 
we see, for any given equilibrium strategy $\lambda^*$, that if we can find a value for $p$ that gives equality in \eqref{eq-ubarlam}, then it is a lower threshold for the 
set of points $p$ where $\lambda^*(p)=\ubar{\lambda}$ is possible; 
i.e., for any $p$ smaller than this threshold we must have $\lambda^*(p)=\bar{\lambda}$.  
The interpretation is that if the stopper assigns a small probability to an active controller then the controller will control with the large rate $\bar{\lambda}$.

We now consider the case $\lambda^*(p)=\bar{\lambda}$ and obtain, similarly to the above, the condition
\begin{align*}
(\bar{\lambda}-\ubar{\lambda})\bar{\lambda}p(1-p)v_p\geq(\bar{\lambda}-\ubar{\lambda})^2,
\end{align*}
which in turns gives the condition
\begin{align}\label{eq-barlam}
p(1-p)v_p\geq \frac{\bar{\lambda}-\ubar{\lambda}}{\bar{\lambda}}=:(B).
\end{align}
Similarly to the analysis of (A) above, this gives us an upper threshold for $p$ where $\lambda^*(p)=\bar{\lambda}$ is possible; i.e., for any 
$p$ exceeding this threshold we need $\lambda^*(p) = \ubar{\lambda}$.

In order for \eqref{eq-ubarlam} and \eqref{eq-barlam} to be feasible conditions we need that $(A)$ minus $(B)$ is non-negative, which is is directly verified. Hence, with the observations above as a motivation we will search for an equilibrium strategy $\lambda^*$ of the threshold type \eqref{KL:wqf323avsf1-2}, 
where the threshold switching point $b_2^*$ is a such that
\begin{align}\label{p_hat_cond_NE}
\frac{\bar{\lambda}-\ubar{\lambda}}{\bar{\lambda}}\leq b_2^*(1-b_2^*)v_p(b_2^*)\leq \frac{\bar{\lambda}-\ubar{\lambda}}{\ubar{\lambda}}.
\end{align}
Note that \eqref{p_hat_cond_NE} indicates that there may be multiple Nash equilibria, since every $b_2^*$ satisfying \eqref{p_hat_cond_NE} results in an equilibrium candidate strategy for the controller. 
As our equilibrium controller candidate we will, however, consider a switching point $b_2^*$ that corresponds to equality in the right hand side inequality in \eqref{p_hat_cond_NE}. More precisely, we will search for an equilibrium controller strategy given by \eqref{KL:wqf323avsf1-2}, with $b_2^* \in(b_1^*,1)$ satisfying
\begin{align}\label{p_hat_def_eq}
 b_2^*(1-b_2^*)v_p(b_2^*)= \frac{\bar{\lambda}-\ubar{\lambda}}{\ubar{\lambda}}.
\end{align}
Let us lastly note that if the players use a threshold strategy pair 
$(b^*_1,b_2^*)$, defined as in \eqref{KL:wqf323avsf1-1}--\eqref{KL:wqf323avsf1-2}, with $0<b_1^*<b_2^*<1$, then it can be shown that the corresponding value for the controller, i.e.,

\begin{align*}
\mathcal{J}^2\left(\tau_{b_1^*},\lambda_{b_2^*}(P_t),p\right)&= \E\left[\int_0^{\tau_{b_1^*}} e^{-rs}\left(c-(\lambda_{b_2^*}(P_t)-\ubar{\lambda})^2\right)ds\Bigg|\theta=1\right],
\end{align*}
where  $(P_t)=\left(P^{\lambda_{b_2^*},\lambda_{b_2^*}}_t\right)$  is given by \eqref{P_def_strong_sol} with $\lambda(P_t)=\lambda^*(P_t)=\lambda_{b^*_2}(P_t)$, 
coincides with $v:=v(p,b_1^*,b_2^*)$  defined as the solution to 

\begin{align}\label{ODE_Employee}
\begin{split}
\frac{\bar{\lambda}^2p^2(1-p)^2}{2}v_{pp}(p)+\bar{\lambda}^2(1-p)^2pv_p(p)-rv(p)+c-(\bar{\lambda}-\ubar{\lambda})^2  &= 0,  \enskip p  \in (b_1^*,b_2^*), \\
\frac{\ubar{\lambda}^2p^2(1-p)^2}{2}v_{pp}(p)+\ubar{\lambda}^2(1-p)^2pv_p(p)-rv(p)+c &= 0, \enskip p  \in (b_2^*,b_1^*), \\
v({p})&=0, \enskip p \in [0, b_1^*],\\ 
\enskip v(1)&=\frac{c}{r},\\
v\in \mathcal{C}(0,1) \cap \mathcal{C}^1(b_1^*,1)\cap &\mathcal{C}^2((b_1^*,b_2^*)\cup(b_2^*,1)).
\end{split}
\end{align}
Indeed we will in the subsequent analysis show that we may choose a stopper-controller threshold pair $(b_1^*,b_2^*)$ which is an equilibrium 
with a controller value given by \eqref{ODE_Employee}, under certain parameter assumptions; see Theorems \ref{ver_theorem} and \ref{thm_eq_cand_ex}.

\begin{remark}\label{rem_bound_control} Note that \eqref{ODE_Employee} is a boundary value problem on $(b_1^*,1)$, whose solution $v$ has been extended to be equal to zero on $[0,b_1^*)$.  The boundary conditions of \eqref{ODE_Employee} follow immediately from the boundary cases $p \leq b_1^*$ and $p=1$, 
which result in immediate stopping (corresponding to no income for the controller) and never stopping (corresponding to the income rate $c$ earned forever), respectively.  
\end{remark}

\subsubsection{The perspective of the stopper}
If the players use a threshold strategy pair 
$(b^*_1,b_2^*)$, defined as in \eqref{KL:wqf323avsf1-1}--\eqref{KL:wqf323avsf1-2}, with $0<b_1^*<b_2^*<1$, then the corresponding value for the stopper is 
\begin{align*}
\mathcal{J}^1\left(\tau_{b_1^*},\lambda_{b_2^*}(P_t),p\right) = \E\left[\int_0^{\tau_{b_1^*}} e^{-rs}\left(\theta \lambda_{b_2^*}(P_t)-c\right)ds\right],
\end{align*}
where  $(P_t)=\left(P^{\lambda_{b_2^*},\lambda_{b_2^*}}_t\right)$  is given by \eqref{P_def_strong_sol} with $\lambda(P_t)=\lambda^*(P_t)=\lambda_{b^*_2}(P_t)$. However, since $\lambda=\lambda^*$ we may equivalently consider the dynamics of $(P_t)$ in the representation \eqref{P-SDE-for-innovation}; cf. Proposition \ref{filter_prop}.

Relying again on Proposition \ref{filter_prop} we may moreover use that $P_t=\E[\theta|\mathcal{F}_t^X]$ and iterated expectation to replace $\theta$ in the stopper reward  with $P_t$; in other words we  have the representation
\begin{align*}
\mathcal{J}^1\left(\tau_{b_1^*},\lambda_{b_2^*}(P_t),p\right) = \E\left[\int_0^{\tau_{b_1^*}} e^{-rs}\left(P_t \lambda_{b_2^*}(P_t)-c\right)ds\right],
\end{align*}
where $(P_t)$ is given by \eqref{P-SDE-for-innovation}. 
Based on this it can be shown that the stopper reward coincides with 
$u=u(p,b_1^*,b_2^*)$ defined as the solution to
\begin{align}\label{u_ODE}
\begin{split}
\frac{\bar{\lambda}^2p^2(1-p)^2}{2}u_{pp}(p)-ru(p)+p\bar{\lambda}-c  &=0, \enskip p \in (b^*_1,b^*_2),\\
\frac{\ubar{\lambda}^2p^2(1-p)^2}{2}u_{pp}(p)-ru(p)+p\ubar{\lambda}-c  &=0, \enskip p \in (b^*_2,1),\\
u(p)& =0, \enskip p \in [0,b^*_1],\\
u(1)&=\frac{\ubar{\lambda}-c}{r}, \\
u \in \mathcal{C}(0,1) \cap \mathcal{C}^1(b^*_1,1)\cap &\mathcal{C}^2((b^*_1,b_2^*)\cup(b_2^*,1)).
\end{split}
\end{align}
Note that \eqref{u_ODE} is also a boundary value problem on $(b^*_1,1)$ whose solution $u$ has been extended to be equal to zero on $[0,b^*_1)$. 
The boundary conditions of \eqref{u_ODE} can be interpreted using arguments similar to those in Remark \ref{rem_bound_control}. 

Lastly note that if $(b_1^*,b^*_2)$ corresponds to an equilibrium, then it should hold that 
$u_p(b_1^*,b_1^*,b_2^*)=0$, by the smooth fit principle of optimal stopping theory, which motivates condition \eqref{mainthmcond2} in Theorem \ref{ver_theorem} below.

\subsection{A threshold equilibrium verification theorem}\label{sec:ver-thm}

Here we present our first main result, which is a verification theorem based on the equilibrium conditions that were informally derived in Section \ref{sec:search-NE}.

\begin{theorem}[Verification] \label{ver_theorem} 
Let $b^*_1,b_2^*\in (0,1)$ satisfy $b_1^*<b_2^*$. Let $u(p)=u(p,b_1^*,b_2^*)$ and $v(p)=v(p,b_1^*,b_2^*)$ 
be solutions to the boundary value problems \eqref{ODE_Employee} and  \eqref{u_ODE}. 
Suppose that
\begin{align*}
u(p)&\geq 0,\quad p\in [0,1],\label{mainthmcond1} \tag{I}\\
u_p(b_1^*)&=0,\label{mainthmcond2} \tag{II}\\
\frac{d}{dp}(p(1-p)v_p(p))&<0,\quad p\in (b^*_1,b_2^*)\cup(b_2^*,1),\label{mainthmcond3} \tag{III}\\
b_2^*(1-b_2^*)v_p(b_2^*)&=\frac{\bar{\lambda}-\ubar{\lambda}}{\ubar{\lambda}}.\label{mainthmcond4} \tag{IV}
\end{align*}   
Then the stopper-controller strategy pair $(\tau_{b^*_1},\lambda_{b_2^*})\in \mathbb{T}\times\mathbb{L}$ corresponding to 
\begin{align}\label{KL:wqf323avsf}
\tau_{b^*_1}=\inf\{t  \geq :P_t\leq b^*_1\}
\enskip \text{and} \enskip 
\enskip p \rightarrow \lambda_{b_2^*}(p):=\ubar{\lambda}+(\bar{\lambda}-\ubar{\lambda})I_{\{p< b_2^*\}}
\end{align}
is a Nash equilibrium (Definition \ref{NE_cond}). Moreover, $u$ and $v$ correspond to the equilibrium values for the stopper and the controller respectively, i.e.,
\begin{align*}
u(p)=\mathcal{J}^1\left(\tau_{b^*_1},\lambda_{b_2^*}(P_t),p\right),
\quad 
v(p)=\mathcal{J}^2\left(\tau_{b^*_1},\lambda_{b_2^*}(P_t),p\right).
\end{align*}
\end{theorem}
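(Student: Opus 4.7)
The plan is to verify the two Nash inequalities in \eqref{NE_cond} of Definition~\ref{def:NE} separately and to identify $u, v$ with the candidate equilibrium rewards along the way. For the stopper inequality, fix $\lambda^* = \lambda_{b_2^*}$. By Proposition~\ref{filter_prop}, $(P_t) = (P_t^{\lambda^*, \lambda^*})$ satisfies \eqref{P-SDE-for-innovation} and $P_t = \E[\theta \mid \mathcal{F}^X_t]$, so by the tower property
$\mathcal{J}^1(\tau, \lambda^*, p) = \E\int_0^\tau e^{-rs}(P_s\lambda_{b_2^*}(P_s) - c)\,ds$.
I would apply It\^o to $e^{-rt}u(P_t)$, which is permissible since $u \in \mathcal{C}^1$ on $(0,1)$ (with $u_p(b_1^*)=0$ by (II)) and piecewise $\mathcal{C}^2$; after localization with $\tau_n \uparrow \infty$ and taking expectations this gives
\[
u(p) = \E\!\left[e^{-r(\tau \wedge \tau_n)} u(P_{\tau \wedge \tau_n})\right] + \E\!\int_0^{\tau \wedge \tau_n} e^{-rs}(P_s\lambda_{b_2^*}(P_s) - c)\, I_{\{P_s > b_1^*\}}\, ds,
\]
where the ODE \eqref{u_ODE} is used inside $(b_1^*,1)$ and $u=0$ on $[0,b_1^*]$. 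The desired inequality $u(p) \geq \mathcal{J}^1(\tau, \lambda^*, p)$ then follows from $u \geq 0$ in (I) together with $p\bar{\lambda} - c \leq 0$ on $[0,b_1^*]$ (a consistency condition forced by the smooth-fit (II) together with the positivity (I)); equality holds at $\tau = \tau_{b_1^*}$ since $u(b_1^*) = 0$ and the path stays in $(b_1^*,1)$ until that instant.

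For the controller inequality, fix $\tau^* = \tau_{b_1^*}$, let $\lambda \in \mathbb{L}$ be arbitrary, and use \eqref{P_def_strong_sol} with $\theta = 1$ for $(P_t^{\lambda, \lambda^*})$ since $\mathcal{J}^2$ conditions on $\{\theta = 1\}$. Applying It\^o to $e^{-rt}v(P_t^{\lambda, \lambda^*})$, using $v(b_1^*) = 0$ and analogous localization yields
\[
v(p) - \mathcal{J}^2(\tau^*, \lambda, p) = -\E\!\int_0^{\tau^*} e^{-rs}\bigl[\mathcal{L}^{\lambda,\lambda^*} v - r v + c - (\lambda(P_s) - \ubar{\lambda})^2\bigr] ds,
\]
where $\mathcal{L}^{\lambda,\lambda^*}$ denotes the generator of $(P_t^{\lambda,\lambda^*})$ under $\theta=1$. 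Subtracting the ODE \eqref{ODE_Employee} (which holds with equality when $\lambda = \lambda^*$), the non-positivity of the integrand reduces exactly to inequality \eqref{KL:1}, namely
\[
(\lambda^*(p) - \lambda(p))\,\lambda^*(p)\, p(1-p)\, v_p(p) \geq (\lambda^*(p) - \ubar{\lambda})^2 - (\lambda(p) - \ubar{\lambda})^2,
\]
for $p \in (b_1^*,1)$ and $\lambda(p) \in \{\ubar{\lambda}, \bar{\lambda}\}$. In the two non-trivial cases $\lambda(p) \ne \lambda^*(p)$, this is equivalent to $p(1-p)v_p(p) \leq (\bar{\lambda} - \ubar{\lambda})/\ubar{\lambda}$ on $(b_2^*,1)$ and $p(1-p)v_p(p) \geq (\bar{\lambda} - \ubar{\lambda})/\bar{\lambda}$ on $(b_1^*,b_2^*)$.

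These two inequalities are precisely the role of (III)--(IV). Setting $g(p) := p(1-p)v_p(p)$, which is continuous across $b_2^*$ since $v \in \mathcal{C}^1(b_1^*,1)$, condition (III) makes $g$ strictly decreasing on each of $(b_1^*,b_2^*)$ and $(b_2^*,1)$, and (IV) pins $g(b_2^*) = (\bar{\lambda} - \ubar{\lambda})/\ubar{\lambda}$. Hence $g(p) < (\bar{\lambda} - \ubar{\lambda})/\ubar{\lambda}$ on $(b_2^*,1)$, while on $(b_1^*,b_2^*)$ one has $g(p) > g(b_2^*) = (\bar{\lambda} - \ubar{\lambda})/\ubar{\lambda} \geq (\bar{\lambda} - \ubar{\lambda})/\bar{\lambda}$, the last step using $\bar{\lambda} > \ubar{\lambda}$; equality when $\lambda = \lambda^*$ identifies $v$ with the equilibrium controller reward, and an analogous equality identifies $u$. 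The main technical obstacle is that $u, v$ are only $\mathcal{C}^1$ at $b_1^*, b_2^*$, so a classical It\^o formula does not apply directly; I would resolve this by a standard mollification or It\^o--Tanaka argument, exploiting that $(P_t)$ is a non-degenerate diffusion in $(0,1)$ so that $\{t : P_t \in \{b_1^*, b_2^*\}\}$ has zero Lebesgue measure, together with localization to ensure the stochastic integral term has zero expectation in the limit.
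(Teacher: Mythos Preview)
Your proposal is correct and follows essentially the same route as the paper's proof: It\^o's formula applied to $e^{-rt}u(P_t)$ (using Proposition~\ref{filter_prop} and the tower property) for the stopper, and to $e^{-rt}v(P_t^{\lambda,\lambda^*})$ conditioned on $\theta=1$ for the controller, with the key inequality for the controller reduced to \eqref{KL:1} and verified via (III)--(IV) exactly as you describe. The paper handles the $\mathcal{C}^1$ regularity issue at $b_1^*,b_2^*$ by inserting indicators $1_{\{P_t\notin\{b_1^*,b_2^*\}\}}$ in the drift (your zero-Lebesgue-measure observation), derives $b_1^*\le c/\bar\lambda$ from (I)--(II) and the ODE just as you anticipated, and localizes with a deterministic time horizon $n\to\infty$ rather than stopping times $\tau_n$.
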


\begin{remark} \label{parameters-conditions-forNE:rem}
(i) Recall that \eqref{KL:wqf323avsf} corresponds to the control process being $\lambda_{b_2^*}(P_t)$  
where $(P_t)$ is given by \eqref{P_def_strong_sol} with $\lambda(P_t)=\lambda^*(P_t)=\lambda_{b^*_2}(P_t)$. 
(ii) If a pair $(b_1^*,b_2^*)$ corresponds to an equilibrium as in Theorem \ref{ver_theorem} then the equilibrium values 
$u(p)=u(p,b_1^*,b_2^*)$ and $v(p)=v(p,b_1^*,b_2^*)$ can be determined explicitly by solving 
\eqref{ODE_Employee} and  \eqref{u_ODE}; cf. Section \ref{sec:proof-exist}. 
\end{remark}

\begin{proof} (of Theorem \ref{ver_theorem}.) 
For ease of exposition we write in this proof $\lambda^*=\lambda_{b^*_2}$ and $\tau_{b_1^*}=\tau_{b^*}$. 

\textbf{Optimality of $\tau_{b^*}$}. 
Note that \eqref{mainthmcond1} and \eqref{mainthmcond2}, together with the boundary condition $u(b_1^*)=0$, imply that $u_{pp}(b_1^*+)\geq 0$. Thus, using the ODE in \eqref{u_ODE}, we obtain
\begin{align}\label{parameters-conditions-forNE}
b_1^*\leq \frac{c}{\bar{\lambda}}.
\end{align}
Let $n$ be fixed number. 
Relying on Proposition \ref{filter_prop} which implies that $(P_t)$ solves \eqref{P-SDE-for-innovation}, as well as \eqref{mainthmcond2} and It\^{o}'s formula we obtain for an arbitrary stopping time $\tau$ that
\begin{align*}
e^{-r(\tau\wedge n)}u(P_{\tau \wedge n})&=u(p)  +\int_0^{\tau\wedge n}e^{-rt}\left(\frac{(\lambda^*(P_t)^2P_t^2(1-P_t)^2}{2}u_{pp}(P_t)-ru(P_t) \right)1_{\{P_t \notin \{b^*_1,b^*_2\}\}}dt\\
&\quad +\int_0^{\tau\wedge n}e^{-rt}\lambda^*(P_t)P_t(1-P_t)u_{p}(P_{t})d\hat{W}_t,
\end{align*}
where the It\^{o} integral is a martingale since the integrand is bounded. Now use \eqref{u_ODE} and \eqref{parameters-conditions-forNE} to see that
\begin{align}\label{KL:hejheh}
-\left(\frac{(\lambda^*)^2p^2(1-p)^2}{2}u_{pp}-ru\right) \geq \lambda^*p-c.
\end{align}
Using the above together with Proposition \ref{filter_prop} and iterated expectation, and \eqref{mainthmcond1}, we find that
\begin{align*}
u(p)&= \E\left[e^{-r(\tau\wedge n)}u(P_{\tau \wedge n})-\int_0^{\tau\wedge n}e^{-rt}\left(\frac{(\lambda^*(P_t)^2P_t^2(1-P_t)^2}{2}u_{pp}(P_t)-ru(P_t) \right)
1_{\{P_t \notin \{b^*_1,b^*_2\}\}}dt\right]\\
&\geq  \E\left[e^{-r(\tau\wedge n)}u(P_{\tau \wedge n})+\int_0^{\tau\wedge n}e^{-rt}\left(\lambda^*(P_t) P_t -c\right)dt\right]\\
&\geq  \E\left[\int_0^{\tau\wedge n}e^{-rt}\left(\lambda^*(P_t)\mathbb{E}[\theta|\mathcal{F}^X_t] -c\right)dt\right]\\
&=\E\left[\int_0^{\tau\wedge n}e^{-rt}\left(\theta\lambda^*(P_t)-c\right)dt\right].
\end{align*}
By sending $n\rightarrow \infty$ and relying on dominated convergence we thus obtain
\begin{align*}
u(p)\geq  \E\left[\int_0^{\tau}e^{-rt}\left(\theta\lambda^*(P_t)-c\right)dt\right]=\mathcal{J}^2\left(\tau,\lambda^*,p\right).
\end{align*}
Using similar arguments as above with $\tau = \tau_{b^*}$ we find, by observing that we have equality in \eqref{KL:hejheh} for $p\in (b^*_1,1)$, that 
\begin{align*}
u(p)&=\E\left[e^{-r(\tau_{b^*}\wedge n)}u(P_{\tau_{b^*} \wedge n})\right] + \E\left[\int_0^{\tau_{b^*}\wedge n}e^{-rt}\left(\theta\lambda^*(P_t) -c\right)dt\right].
\end{align*}
(Note that the equality above is trivial when $p \leq b^*_1$, since $u(0)=0$ for $p \leq b^*_1$). 

Using that $u$ is bounded together with $u(b^*_1)=0$ we find using dominated convergence 
that the first expectation above converges to zero as 
$\rightarrow \infty$. Hence, using dominated convergence again, we find that 
\begin{align*}
u(p)&=\E\left[\int_0^{\tau_{b^*}}e^{-rt}\left(\theta \lambda^*(P_t) -c\right)dt\right]= \mathcal{J}^{1}\left(\tau_{b^*}, \lambda^*,p\right).
\end{align*}
We conclude that 
\begin{align*}
u(p) = \mathcal{J}^{1}\left(\tau_{b^*}, \lambda^*,p\right) = \sup_{\tau \in \mathbb{T} }\mathcal{J}^{1}\left(\tau, \lambda^*,p\right).
\end{align*}

\textbf{Optimality of $\lambda^*$}.
The controller reward \eqref{reward-employee} is conditioned on $\theta= 1$. 
Hence, in order to find the optimal strategy for the controller, 
we consider the process $(P_t)$ defined by \eqref{P_def_strong_sol} with $\theta=1$; in particular, if the controller selects an admissible control $\lambda$, 
then $(P_t)$ is given by 
\begin{align*}
dP_t=\lambda^*(P_t)P_t(1-P_t)(\lambda(P_t)-\lambda^*(P_t)P_t)dt+\lambda^*(P_t)P_t(1-P_t)dW_t.
\end{align*}
We now define the process $(N_t)$ given by
\begin{align*}
N_t=e^{-r(t\wedge \tau_{b^*})}v(P_{t\wedge\tau_{b^*}})+\int_0^{t\wedge\tau_{b^*}}e^{-rs}(c-(\lambda(P_s)-\ubar{\lambda})^2)ds.
\end{align*}
Consider now an arbitrary admissible control strategy $\lambda \in \mathbb{L}$. Using It\^{o}'s formula we obtain for $t\leq \tau_{b^*}$ that
\begin{align*}
dN_t&=e^{-rt}\left(\frac{1}{2}(\lambda^*(P_t))^2P_t^2(1-P_t)^2v_{pp}(P_t)-rv(P_t)\right)1_{\{P_t \neq b^*_2\}}dt\\
&\quad +e^{-rt}\lambda^*(P_t)P_t(1-P_t)(\lambda(P_t)-\lambda^*(P_t)P_t)v_p(P_t) 1_{\{P_t \neq b^*_2\}}dt\\
&\quad +e^{-rt}(c-(\lambda(P_t)-\ubar{\lambda})^2) 1_{\{P_t \neq b^*_2\}}dt +e^{-rt}\lambda^*(P_t)P_t(1-P_t)v_p(P_t)dW_t.
\end{align*}
Hence, $(N_t)$ is an It\^o process with a drift coefficient given, for $p\in(b^*_1,b^*_2) \cup (b^*_2,1)$, by  
\begin{align*}
e^{-rt} \left(
\frac{1}{2}(\lambda^*(p))^2p^2(1-p)^2v_{pp}(p) -rv(p) + \lambda^*(p)p(1-p)(\lambda(p)-\lambda^*(p)p)v_p(p) + c-(\lambda(p)-\ubar{\lambda})^2\right). 
\end{align*}
Note that it also holds, for $p\in(b^*_1,b^*_2) \cup (b^*_2,1)$, that 
\begin{align*}
\frac{1}{2}(\lambda^*(p))^2p^2(1-p)^2v_{pp}(p)-rv(p) +\lambda^*(p)p(1-p)(\lambda^*(p)-\lambda^*(p)p)v_p(p) + c-(\lambda^*(p)-\ubar{\lambda})^2
=0.
\end{align*}
To see this use \eqref{ODE_Employee} and that $\lambda^*=\lambda_{b_2*}$ is given in \eqref{KL:wqf323avsf}. 

Multiplying the equation above by $e^{-rt}$ and subtracting the resulting left hand side (which is zero) from the drift coefficient of $(N_t)$ yields that the drift coefficient of $(N_t)$ can, for 
$p\in(b^*_1,b^*_2) \cup (b^*_2,1)$, be written as 
\begin{align*}
e^{-rt}
\left(
(\lambda(p)-\lambda^*(p))\lambda^*(p)p(1-p)v_p(p) + (\lambda^*(p)-\ubar{\lambda})^2 - (\lambda(p)-\ubar{\lambda})^2
\right).
\end{align*}
With arguments similar to those in Section \ref{sec:controller-perspec} we find that conditions \eqref{mainthmcond3} and \eqref{mainthmcond4} imply that 
the expression above is non-positive (compare the above expression with \eqref{KL:1}), i.e., the drift of $(N_t)$ is non-positive (regardless of the choice of $\lambda \in \mathbb{L}$).

We conclude that $(N_t)$ is a bounded process with non-positive drift. Using optional sampling we find
\begin{align*}
v(p)=N_0\geq\mathbb{E}[N_{\tau_{b^*}\wedge n}|\theta=1]
\end{align*}
for any $n\in\N$. Using dominated convergence and $\lim_{n\to \infty}e^{-r(\tau_{b^*}\wedge n)}v(P_{\tau_{b^*} \wedge n})=0$ a.s. we find
\begin{align*}
v(p)=N_0\geq\mathbb{E}\left[\lim_{n\to\infty}N_{\tau_{b^*}\wedge n}|\theta=1\right]
=\E\left[\int_0^{\tau_{b^*}}e^{-rt}(c-(\lambda(P_t)-\ubar{\lambda})^2)dt\Bigg|\theta=1\right]=
\mathcal{J}^{2}\left(\tau_{b^*},\lambda,p\right).
\end{align*}
Repeating the same arguments with $\lambda=\lambda^*$ we obtain that the drift of $(N_t)$ vanishes and that
\begin{align*}
v(p)=N_0=\E\left[\lim_{n\to\infty}N_{\tau_{b^*}\wedge n}|\theta=1\right]]=\E\left[\int_0^{\tau_{b^*}}e^{-rt}(c-(\lambda^*(P_t)-\ubar{\lambda}))^2dt\Bigg|\theta=1\right]=
\mathcal{J}^{2}\left(\tau_{b^*},\lambda^*,p\right).
\end{align*}
We conclude that
\begin{align*}
v(p)=\mathcal{J}^{2}\left(\tau_{b^*}, \lambda^*,p\right)=\sup_{\lambda \in \mathbb{L} }
\mathcal{J}^{2}\left(\tau_{b^*},\lambda,p\right).
\end{align*}
\end{proof}

\subsection{Equilibrium existence}\label{sec:existNE}
The main result of this section is Theorem \ref{thm_eq_cand_ex} which reports conditions on the primitives of the model that guarantee the existence of a threshold equilibrium. 
The proof of this result, which is reported in Section \ref{sec:proof-exist}, relies on the Poincaré-Miranda theorem and is in this sense a fixed-point type proof. In particular, the Poincaré-Miranda theorem follows from the Brouwer fixed-point theorem; cf. e.g., \cite{poincaremiranda2019}.

The following notation will be used throughout this section
\begin{align}\label{eq_alfa}
\alpha_1(\lambda):=\frac{1}{2} + \frac{\sqrt{8\frac{r}{\lambda^2}+1}}{2}, 
\enskip
\alpha_2(\lambda):=\frac{1}{2}-\frac{\sqrt{8\frac{r}{\lambda^2}+1}}{2}.
\end{align}
In particular, we will use these to express solutions to the ODEs in \eqref{ODE_Employee} and  \eqref{u_ODE}.

\begin{theorem}[Equilibrium existence]\label{thm_eq_cand_ex}
Suppose the model parameters $\ubar{\lambda},\bar{\lambda},c$ and $r$ are such that 
\begin{align}\label{cond_lemma_employee}
-\alpha_2(\bar{\lambda})\left(\frac{(\bar{\lambda}-\ubar{\lambda})^2}{r}-\frac{\bar{\lambda}-\ubar{\lambda}}{\alpha_1(\ubar{\lambda})\ubar{\lambda}}\right)<\frac{\bar{\lambda}-\ubar{\lambda}}{\ubar{\lambda}}<\frac{c}{r}
\end{align}
and 
\begin{align}
(\bar{\lambda}-\ubar{\lambda})^2\leq c \leq(1-\alpha_1(\ubar{\lambda}))\bar{\lambda}+\alpha_1(\ubar{\lambda})\ubar{\lambda}.\label{existence_thm_cond}
\end{align}
Then there exists constants $0<b^*_1<b^*_2<1$ 
such that the strategy pair $(\tau_{b^*_1},\lambda_{b^*_2})$ given by \eqref{KL:wqf323avsf} is a Nash equilibrium.
\end{theorem}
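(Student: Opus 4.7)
The plan is to reduce the existence of a threshold equilibrium to the existence of a joint root of a two-dimensional system of equations, and then to establish this joint root via the Poincar\'e--Miranda theorem, as indicated in the opening paragraph of this section.

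First I would construct the candidate value functions $u(p;b_1,b_2)$ and $v(p;b_1,b_2)$ as the explicit solutions of the boundary value problems \eqref{u_ODE} and \eqref{ODE_Employee} for arbitrary $0<b_1<b_2<1$. On each of the two sub-intervals $(b_1,b_2)$ and $(b_2,1)$ the ODE is linear of second order with a constant value of $\lambda^*(p)\in\{\bar{\lambda},\ubar{\lambda}\}$, so its general solution is the sum of a particular solution (handling the inhomogeneity) and a linear combination of fundamental solutions built from the exponents $\alpha_1(\lambda),\alpha_2(\lambda)$ in \eqref{eq_alfa}. The four integration constants (two per sub-interval) are pinned down by the boundary conditions at $b_1$ and at $1$, together with the $C^1$-matching at $b_2$ required by \eqref{u_ODE} and \eqref{ODE_Employee}. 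This yields closed-form expressions for $u$ and $v$ and in particular for the quantities $u_p(b_1;b_1,b_2)$ and $v_p(b_2;b_1,b_2)$.

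Next, I would set
\begin{align*}
F_1(b_1,b_2)&:=u_p(b_1;b_1,b_2),\\
F_2(b_1,b_2)&:=b_2(1-b_2)v_p(b_2;b_1,b_2)-\frac{\bar{\lambda}-\ubar{\lambda}}{\ubar{\lambda}},
\end{align*}
so that a joint root of $(F_1,F_2)$ encodes exactly conditions \eqref{mainthmcond2} and \eqref{mainthmcond4} of the verification Theorem~\ref{ver_theorem}. The key step is then to exhibit a rectangle $R=[b_1^-,b_1^+]\times[b_2^-,b_2^+]$ contained in $\{0<b_1<b_2<1\}$ on whose boundary $F_1$ has opposite signs on the two faces $b_1=b_1^\pm$ (for every $b_2$) and $F_2$ has opposite signs on the two faces $b_2=b_2^\pm$ (for every $b_1$). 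Continuity of $F_1,F_2$ then yields, by Poincar\'e--Miranda, a point $(b_1^*,b_2^*)\in R$ with $F_1=F_2=0$. A natural choice is $b_1^-$ close to $0$ and $b_1^+$ close to $c/\bar{\lambda}$ (the upper bound $b_1\le c/\bar{\lambda}$ was already recorded in the proof of Theorem~\ref{ver_theorem}), while for $b_2$ one needs an interval on which $b_2(1-b_2)v_p(b_2)$ straddles the value $(\bar{\lambda}-\ubar{\lambda})/\ubar{\lambda}$. The parameter hypotheses \eqref{cond_lemma_employee}--\eqref{existence_thm_cond} enter precisely here as the conditions that make the required sign pattern consistent: the right-hand inequality $(\bar{\lambda}-\ubar{\lambda})/\ubar{\lambda}<c/r$ in \eqref{cond_lemma_employee} controls one face in $b_2$, the left-hand inequality of \eqref{cond_lemma_employee} controls the other, and \eqref{existence_thm_cond} governs the $b_1$-faces via the boundary values of $u$ and the magnitude of the forcing.

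Finally, for the point $(b_1^*,b_2^*)$ produced by Poincar\'e--Miranda, I would verify that the remaining conditions \eqref{mainthmcond1} and \eqref{mainthmcond3} of Theorem~\ref{ver_theorem} hold, so that the verification theorem applies and gives the Nash equilibrium $(\tau_{b_1^*},\lambda_{b_2^*})$. Condition \eqref{mainthmcond1}, $u\ge 0$ on $[0,1]$, should follow from the boundary condition $u(b_1^*)=0$, the smooth fit $u_p(b_1^*)=0$, the sign structure of \eqref{u_ODE}, and the inequality $b_1^*\le c/\bar{\lambda}$ which forces $u_{pp}(b_1^*+)\ge 0$. Condition \eqref{mainthmcond3}, the strict decrease of $p\mapsto p(1-p)v_p(p)$ on each sub-interval, should follow from the explicit form of $v$ and the hypothesis \eqref{existence_thm_cond}; this is the content foreshadowed by the reference to Proposition~\ref{dec_f} in Section~\ref{sec:search-NE}. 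The main obstacle of the whole argument is the Poincar\'e--Miranda sign-pattern verification: once the closed-form expressions for $u$ and $v$ are in hand, this amounts to a careful but elementary comparison of these expressions on the four faces of $R$, and it is exactly at this point that the somewhat intricate hypothesis \eqref{cond_lemma_employee} is needed in full.
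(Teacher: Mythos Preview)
Your overall strategy is exactly the paper's: build the explicit solutions $u,v$ for arbitrary $(b_1,b_2)$, define the two residual functions corresponding to \eqref{mainthmcond2} and \eqref{mainthmcond4}, obtain a joint root by Poincar\'e--Miranda, and then check \eqref{mainthmcond1} and \eqref{mainthmcond3} separately. So the architecture is right, and your identification of which parameter hypothesis governs which boundary face is correct.

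However, there is a genuine gap in the Poincar\'e--Miranda step. You propose a rectangle $R=[b_1^-,b_1^+]\times[b_2^-,b_2^+]$ \emph{contained in} the open triangle $\{0<b_1<b_2<1\}$. This cannot produce the required sign pattern. The positive sign of $F_2$ comes from Lemma~\ref{p_hat_ex_lemma}, which says $b_2(1-b_2)v_p(b_2-;b_1,b_2)\to\infty$ as $b_2\searrow b_1$; so $F_2(b_1,b_2^-)>0$ forces $b_2^-$ close to $b_1$. On the other side, the positive sign of $F_1$ comes from Lemma~\ref{lim_b_lemma}, which needs $b_1\nearrow 1$ (not $b_1\nearrow c/\bar\lambda$ as you suggest; the bound $b_1^*\le c/\bar\lambda$ is a \emph{consequence} of the equilibrium conditions, not a tool for locating the rectangle). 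Thus $b_1^+$ must be close to $1$, which forces $b_2^->b_1^+$ close to $1$ as well, and then $F_2(b_1^-,b_2^-)$ is governed by the $b_2\nearrow 1$ limit in Lemma~\ref{p_hat_ex_lemma} and is negative, not positive. The two face requirements are incompatible on any rectangle inside the triangle.

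The paper resolves this by (i) replacing $F_2$ by $\arctan(F_2)$ so that the $+\infty$ limit becomes a finite value $\pi/2$, (ii) continuously extending both maps to the full rectangle $[\ubar b_1,\bar b_1]\times[0,1]$, with the extension of the second map equal to $\pi/2$ on the forbidden region $\{b_2\le b_1\}$, and (iii) applying Poincar\'e--Miranda on this larger rectangle and then checking a posteriori that any root must lie in the admissible triangle (because the extended $F_2$ does not vanish outside it). This extension device is the missing idea in your proposal; once you incorporate it, together with the correct endpoint $b_1^+$ near $1$ from Lemma~\ref{lim_b_lemma}, the rest of your outline goes through as in the paper.
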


Figure \ref{fig:valueplot} contains a numerical example. 
 
\begin{remark} 
(i) The conditions \eqref{cond_lemma_employee}--\eqref{existence_thm_cond} of Theorem \ref{thm_eq_cand_ex} can be directly examined for any given parameter specification. 
(ii) If we set $\bar{\lambda}= \ubar{\lambda} + h$, then we can write these conditions as
\begin{align*}
-\frac{\alpha_2(\bar{\lambda})}{r}h^2+\frac{\alpha_2(\bar{\lambda})}{\alpha_1(\ubar{\lambda})\ubar{\lambda}}h
<\frac{h}{\ubar{\lambda}}<\frac{c}{r}
\end{align*}
and 
\begin{align*}
h^2&\leq c \leq(1-\alpha_1(\ubar{\lambda}))h + \ubar{\lambda}.
\end{align*}
Using this observation it is easily verified that there exists, for fixed $c$ and $r$,  a constant $\bar{h}\in(0,\infty)$ such that these conditions are satisfied for each $h\leq \bar{h}$. 
In other words, the conditions of Theorem \ref{thm_eq_cand_ex} hold, i.e., an equilibrium exists, whenever $\ubar{\lambda}$ and $\bar{\lambda}$ are sufficiently close to each other.
\end{remark}

\begin{figure}[h]
    \centering
   \includegraphics[scale=0.5]{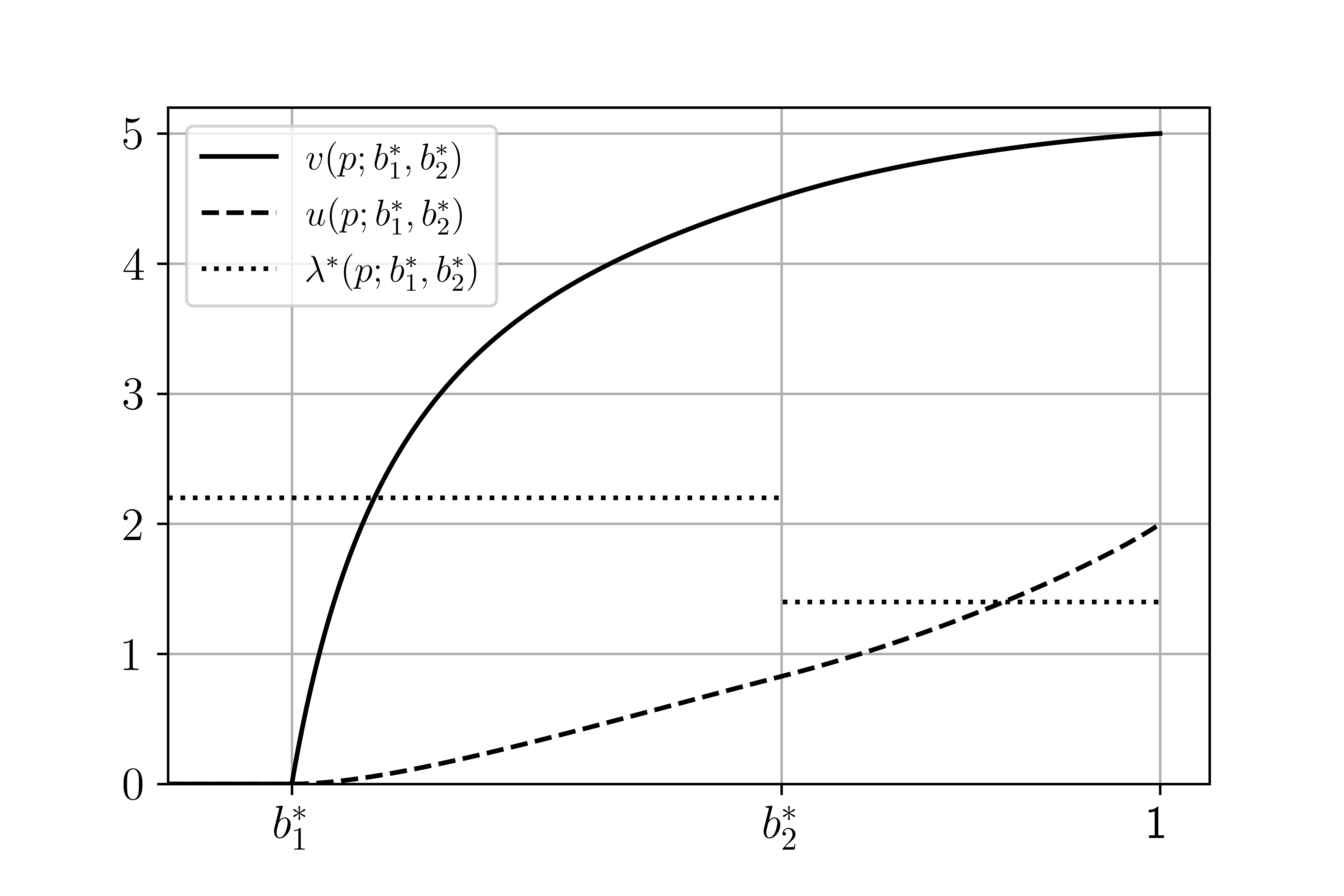}
    \caption{The value functions $v$ (controller) and $u$ (stopper), 
		as well as the equilibrium thresholds $(b_1^*\thickapprox 0.125,b_2^*\thickapprox 0.618)$ and the equilibrium controller strategy $\lambda_{b^*_2}$. 
		The parameters are $c=1,r=0.2,\bar{\lambda}=2.2$ and $\underline{\lambda}=1.4$.}
    \label{fig:valueplot}
\end{figure}

\subsection{The proof of Theorem \ref{thm_eq_cand_ex}}\label{sec:proof-exist}

The proof of Theorem \ref{thm_eq_cand_ex} is found in Section \ref{sec:the-proof-exist}. 
It relies on the content of Sections \ref{sec:obs-cont-thres}--\ref{sec:obs-stop-thres}.

\subsubsection{Observations regarding Equation \eqref{ODE_Employee}}\label{sec:obs-cont-thres}

Let $0<b_1^*<b_2^*<1$ be arbitrary constants. It can be verified that the solution $v(p)=v(p,b_1^*,b_2^*)$ to \eqref{ODE_Employee} is

\begin{align}\label{general-sol-v}
v(p,b_1^*,b_2^*)&=\begin{cases}
0,\quad p\leq b_1^*,\\
k_1\left(\frac{1-p}{p}\right)^{\alpha_1(\bar{\lambda})}+k_2\left(\frac{1-p}{p}\right)^{\alpha_2(\bar{\lambda})} + \frac{c-(\bar{\lambda}-\ubar{\lambda})^2}{r},\quad  b^*_1 < p< b^*_2,\\
k_3\left(\frac{1-p}{p}\right)^{\alpha_1(\ubar{\lambda})}+k_4\left(\frac{1-p}{p}\right)^{\alpha_2(\ubar{\lambda})} + \frac{c}{r},\quad p\geq b^*_2,
\end{cases}
\end{align}
where the constants $k_i,i=1,..,4$ can be determined by the boundary and smoothness conditions in \eqref{ODE_Employee}.
(Recall that $\alpha_i(\lambda),i=1,2$ are defined in \eqref{eq_alfa}.) 
However, instead of directly 
determining $k_i,i=1,..,4$ to attain these conditions we will determine these constants in order to attain only the boundary conditions and the continuity in 
\eqref{ODE_Employee} as well as the condition
\begin{align}\label{KL:aksldawrv}
v_p(b^*_2+,b_1^*,b_2^*)=\frac{\bar{\lambda}-\ubar{\lambda}}{b^*_2(1-b^*_2)\ubar{\lambda}}.
\end{align}
(The interpretation of \eqref{KL:aksldawrv} is that condition \eqref{mainthmcond4} in Theorem \ref{thm_eq_cand_ex} holds from the right.) After this we will show that $b^*_2$ can be chosen so that 
\eqref{KL:aksldawrv} also holds from the left (i.e., so that $v$ satisfies all conditions of \eqref{ODE_Employee} as well as \eqref{mainthmcond4}); 
see Lemma \ref{p_hat_ex_lemma} below.

First use that $v(1,b_1^*,b_2^*)=\frac{c}{r}$ implies that $k_4=0$. Note also that \eqref{KL:aksldawrv} implies that
\begin{align*}
k_3=-\frac{\bar{\lambda}-\ubar{\lambda}}{\alpha_1(\ubar{\lambda})\ubar{\lambda}}\left(\frac{1-b_2^*}{b_2^*}\right)^{-\alpha_1(\ubar{\lambda})}.
\end{align*}
Using these constants we obtain from \eqref{general-sol-v} that
\begin{align}\label{v_jump}
v(b^*_2+,b_1^*,b_2^*)=\frac{c}{r}-\frac{\bar{\lambda}-\ubar{\lambda}}{\alpha_1(\ubar{\lambda})\ubar{\lambda}}.
\end{align}
Using the condition $v(b_1^*,b_1^*,b_2^*)=0$ we obtain
\begin{align*}
k_2=\frac{(\bar{\lambda}-\ubar{\lambda})^2-c}{r}\left(\frac{1-b_1^*}{b_1^*}\right)^{-\alpha_2(\bar{\lambda})}-k_1\left(\frac{1-b_1^*}{b_1^*}\right)^{\alpha_1(\bar{\lambda})-\alpha_2(\bar{\lambda})}
\end{align*}
and hence, using also continuity $v(b^*_2-,b_1^*,b_2^*)=v(b^*_2+,b_1^*,b_2^*)$, we obtain
\begin{align*}
k_1=\frac{\frac{(\bar{\lambda}-\ubar{\lambda})^2}{r}-\frac{\bar{\lambda}-\ubar{\lambda}}{\alpha_1(\ubar{\lambda})\ubar{\lambda}}+\frac{c-(\bar{\lambda}-\ubar{\lambda})^2}{r}\left(\frac{1-b_1^*}{b_1^*}\right)^{-\alpha_2(\bar{\lambda})}\left(\frac{1-b^*_2}{b^*_2}\right)^{\alpha_2(\bar{\lambda})}}
{\left(\frac{1-b^*_2}{b^*_2}\right)^{\alpha_1(\bar{\lambda})}-\left(\frac{1-b_1^*}{b_1^*}\right)^{\alpha_1(\bar{\lambda})-\alpha_2(\bar{\lambda})}\left(\frac{1-b^*_2}{b^*_2}\right)^{\alpha_2(\bar{\lambda})}}.
\end{align*}
We need the following technical result in the proof of Theorem \ref{thm_eq_cand_ex} (in Section \ref{sec:the-proof-exist}). 
The proof can be found in Appendix \ref{app:lemmas}.

\begin{lemma}\label{p_hat_ex_lemma} 
Suppose \eqref{cond_lemma_employee} holds. (i) Let $v(p,b_1^*,b_2^*)$ be given by \eqref{general-sol-v}  with the constants $k_i,i=1,\dots,4$ determined above. 
Then 
\begin{align}\label{lim_p_to_1}
\lim_{b^*_2 \nearrow 1}b^*_2(1-b^*_2)v_p(b^*_2-,b_1^*,b_2^*)= 
-\alpha_2(\bar{\lambda})\left(\frac{(\bar{\lambda}-\ubar{\lambda})^2}{r}-\frac{\bar{\lambda}-\ubar{\lambda}}{\alpha_1(\ubar{\lambda})\ubar{\lambda}}\right)
\end{align}
and
\begin{equation}\label{lim_p_b}
\lim_{b^*_2\searrow b_1^*}b^*_2(1-b^*_2)v_p(b^*_2-,b_1^*,b_2^*)=\infty.
\end{equation} 
In particular,
\begin{align}\label{lemma_ex_p_eq}
\lim_{b^*_2\nearrow 1}b^*_2(1-b^*_2)v_p(b^*_2-,b_1^*,b_2^*)<\frac{\bar{\lambda}-\ubar{\lambda}}{\ubar{\lambda}}<\lim_{b^*_2\searrow b^*_1} b^*_2(1-b^*_2)v_p(b^*_2-,b_1^*,b_2^*).
\end{align}
(ii) For any fixed $b_1^*\in (0,1)$ there exists a $b^*_2\in(b_1^*,1)$ such that the solution 
$v(p)$ to \eqref{ODE_Employee} satisfies \eqref{mainthmcond4}.
\end{lemma}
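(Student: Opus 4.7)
The plan is to (a) derive a closed-form expression for $b_2^*(1-b_2^*)v_p(b_2^*-, b_1^*, b_2^*)$ directly from \eqref{general-sol-v}, (b) evaluate the two boundary limits claimed in part (i), and (c) conclude (ii) via continuity and the intermediate value theorem.

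First I would differentiate the middle-region expression for $v$ in \eqref{general-sol-v}. Using $\frac{d}{dp}\left(\frac{1-p}{p}\right)=-\frac{1}{p^2}$ and multiplying by $p(1-p)$, I obtain the compact representation
\begin{align*}
p(1-p)v_p(p) \;=\; -\alpha_1(\bar{\lambda})\, k_1 \left(\tfrac{1-p}{p}\right)^{\alpha_1(\bar{\lambda})} - \alpha_2(\bar{\lambda})\, k_2 \left(\tfrac{1-p}{p}\right)^{\alpha_2(\bar{\lambda})}, \quad p\in(b_1^*,b_2^*).
\end{align*}
Inserting the explicit formulas for $k_1$ and $k_2$ that appear in the text just above the lemma then expresses $b_2^*(1-b_2^*)v_p(b_2^*-)$ entirely in terms of $b_1^*$, $b_2^*$ and the model constants.

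Next I would evaluate each limit. Since $\alpha_1(\bar{\lambda})>0>\alpha_2(\bar{\lambda})$, as $b_2^*\nearrow 1$ we have $\left(\frac{1-b_2^*}{b_2^*}\right)^{\alpha_1(\bar{\lambda})}\to 0$ and $\left(\frac{1-b_2^*}{b_2^*}\right)^{\alpha_2(\bar{\lambda})}\to +\infty$. Both the numerator and denominator of $k_1$ are dominated by the $\left(\frac{1-b_2^*}{b_2^*}\right)^{\alpha_2(\bar{\lambda})}$-term; after cancelling this leading singularity, the term $k_1 \left(\frac{1-b_2^*}{b_2^*}\right)^{\alpha_1(\bar{\lambda})}$ vanishes in the limit, and the residual $k_2\left(\frac{1-b_2^*}{b_2^*}\right)^{\alpha_2(\bar{\lambda})}$-contribution collapses to exactly the right-hand side of \eqref{lim_p_to_1}. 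For the second limit, as $b_2^*\searrow b_1^*$ the denominator of $k_1$ tends to $0$ because its two terms both reduce to $\left(\frac{1-b_1^*}{b_1^*}\right)^{\alpha_1(\bar{\lambda})}$, while the numerator has a finite non-zero limit under \eqref{cond_lemma_employee}; a sign check then gives $b_2^*(1-b_2^*)v_p(b_2^*-)\to +\infty$, establishing \eqref{lim_p_b}. Combining the two limits with the left inequality in \eqref{cond_lemma_employee} yields \eqref{lemma_ex_p_eq}.

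Part (ii) is then immediate: the map $b_2^*\mapsto b_2^*(1-b_2^*)v_p(b_2^*-, b_1^*, b_2^*)$ is continuous on $(b_1^*,1)$ as a composition of elementary continuous functions of $b_2^*$, so the intermediate value theorem together with \eqref{lemma_ex_p_eq} furnishes some $b_2^*\in(b_1^*,1)$ satisfying \eqref{mainthmcond4}.

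The main obstacle is the $b_2^*\nearrow 1$ limit: identifying the exact leading-order cancellation in $k_1$ and checking that the residual expression matches \eqref{lim_p_to_1} requires careful algebraic bookkeeping of competing terms of orders $\left(\frac{1-b_2^*}{b_2^*}\right)^{\alpha_1(\bar{\lambda})}$ and $\left(\frac{1-b_2^*}{b_2^*}\right)^{\alpha_2(\bar{\lambda})}$. A secondary subtlety is the sign check in \eqref{lim_p_b}, where one must ensure the divergence is to $+\infty$ and not $-\infty$; this is precisely where the left inequality in \eqref{cond_lemma_employee} enters, since it keeps the numerator of $k_1$ bounded away from $0$ with the correct sign as $b_2^*\searrow b_1^*$.
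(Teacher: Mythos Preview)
Your proposal is correct and follows essentially the same route as the paper: write $p(1-p)v_p(p)$ in closed form using the middle-region expression in \eqref{general-sol-v}, analyze the two boundary limits via the asymptotics of $\left(\frac{1-b_2^*}{b_2^*}\right)^{\alpha_i(\bar{\lambda})}$, and deduce (ii) from the intermediate value theorem.

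One small correction: in your final paragraph you attribute the sign check in \eqref{lim_p_b} to the \emph{left} inequality in \eqref{cond_lemma_employee}, but it is actually the \emph{right} inequality $\frac{\bar{\lambda}-\ubar{\lambda}}{\ubar{\lambda}}<\frac{c}{r}$ that is needed there. As $b_2^*\searrow b_1^*$ the numerator of $k_1$ tends to $\frac{c}{r}-\frac{\bar{\lambda}-\ubar{\lambda}}{\alpha_1(\ubar{\lambda})\ubar{\lambda}}$, and positivity of this quantity follows from the right inequality together with $\alpha_1(\ubar{\lambda})>1$. The left inequality is used only where you first said, namely to obtain the first strict inequality in \eqref{lemma_ex_p_eq} from the value of the limit \eqref{lim_p_to_1}.
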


\subsubsection{Observations regarding Equation  \eqref{u_ODE}}\label{sec:obs-stop-thres}

Let $0<b_1^*<b_2^*<1$ be arbitrary constants. It can be verified that the solution  $u(p)=u(p,b_1^*,b_2^*)$ to \eqref{u_ODE} is 
\begin{align}\label{value_employer}
u(p)=\begin{cases}
0, \quad p\leq b_1^*,\\
p\left(c_1\left(\frac{1-p}{p}\right)^{\alpha_1(\bar{\lambda})}+c_2\left(\frac{1-p}{p}\right)^{\alpha_2(\bar{\lambda})}\right)+\frac{p\bar{\lambda}-c}{r},\quad b^*_1 < p< b^*_2,\\
p\left(c_3\left(\frac{1-p}{p}\right)^{\alpha_1(\ubar{\lambda})}+c_4\left(\frac{1-p}{p}\right)^{\alpha_2(\ubar{\lambda})}\right)+\frac{p\ubar{\lambda}-c}{r},\quad p\geq b^*_2,
\end{cases}
\end{align}
where the constants $c_i$ are determined by the boundary and smoothness conditions in \eqref{u_ODE}. The boundary condition $u(1,b_1^*,b_2^*)=\frac{\ubar{\lambda}-c}{r}$ gives us $c_4=0$, while $u(b_1^*,b_1^*,b_2^*)=0$ gives us
\begin{align}\label{asfqwfvwfqwg}
c_2=\frac{c-b_1^*\bar{\lambda}}{b_1^*r}\left(\frac{1-b_1^*}{b_1^*}\right)^{-\alpha_2(\bar{\lambda})}-c_1\left(\frac{1-b_1^*}{b_1^*}\right)^{\alpha_1(\bar{\lambda})-\alpha_2(\bar{\lambda})}.
\end{align}
Finally, the remaining two conditions give us
\begin{align*}
c_3=\left(c_1\left(\frac{1-b^*_2}{b^*_2}\right)^{\alpha_1(\bar{\lambda})}+c_2\left(\frac{1-b^*_2}{b^*_2}\right)^{\alpha_2(\bar{\lambda})}+\frac{\bar{\lambda}-\ubar{\lambda}}{r}\right)
\left(\frac{1-b^*_2}{b^*_2}\right)^{-\alpha_1(\ubar{\lambda})}
\end{align*}
and
\begin{align}\label{c_1_eq}
c_1=\frac{\frac{\alpha_1(\ubar{\lambda})(\bar{\lambda}-\ubar{\lambda})}{r}\left(\frac{1-b^*_2}{b^*_2}\right)^{-\alpha_2(\bar{\lambda})}+(\alpha_2(\bar{\lambda})-\alpha_1(\ubar{\lambda}))\frac{b_1^*\bar{\lambda}-c}{b_1^*r}\left(\frac{1-b_1^*}{b_1^*}\right)^{-\alpha_2(\bar{\lambda})}}{(\alpha_1(\bar{\lambda})-\alpha_1(\ubar{\lambda}))\left(\frac{1-b^*_2}{b^*_2}\right)^{\alpha_1(\bar{\lambda})-\alpha_2(\bar{\lambda})}-(\alpha_2(\bar{\lambda})-\alpha_1(\ubar{\lambda}))\left(\frac{1-b_1^*}{b_1^*}\right)^{\alpha_1(\bar{\lambda})-\alpha_2(\bar{\lambda})}}.
\end{align}
We will make use of the following technical result in the proof of Theorem \ref{thm_eq_cand_ex}. The proof can be found in Appendix \ref{app:lemmas}. 

\begin{lemma}\label{lim_b_lemma}
Suppose \eqref{existence_thm_cond} holds. Then, for the solution to \eqref{u_ODE} it holds that 
\begin{align*}
\lim_{b_1^*\searrow 0} \sup_{b^*_2\in(b_1^*,1)}u_p(b_1^*+,b_1^*,b_2^*) = -\infty,
\enskip \text{and}\enskip 
\lim_{b_1^*\nearrow 1}\inf_{b^*_2\in(b_1^*,1)}u_p(b_1^*+,b_1^*,b_2^*) =\infty.
\end{align*}
\end{lemma}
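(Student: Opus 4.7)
The plan is to work directly from the closed-form expression \eqref{value_employer} for $u$. On $(b_1^*, b_2^*)$ I differentiate, use the boundary condition $u(b_1^*, b_1^*, b_2^*) = 0$ to eliminate the combination $c_1 y_1^{\alpha_1(\bar{\lambda})} + c_2 y_1^{\alpha_2(\bar{\lambda})}$ (where $y_1 := (1-b_1^*)/b_1^*$), and use \eqref{asfqwfvwfqwg} to eliminate $c_2$, obtaining
\begin{align*}
u_p(b_1^*+, b_1^*, b_2^*) = \frac{c}{b_1^* r} - \frac{c_1\,(\alpha_1(\bar{\lambda}) - \alpha_2(\bar{\lambda}))\, y_1^{\alpha_1(\bar{\lambda})-1}}{b_1^*} - \frac{\alpha_2(\bar{\lambda})(c - b_1^* \bar{\lambda})}{r\, b_1^*(1-b_1^*)}.
\end{align*}
All dependence on $b_2^*$ is now carried by $c_1 = c_1(b_1^*, b_2^*)$, which is given in closed form by \eqref{c_1_eq}.

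For the first limit ($b_1^* \searrow 0$, so $y_1 \to \infty$), I rank the four summands $A, B, C, D$ in the numerator and denominator of \eqref{c_1_eq} by their growth in $y_1$ and $y_2 := (1-b_2^*)/b_2^*$. Since $\alpha_2(\bar{\lambda}) < 0 < 1 < \alpha_1(\bar{\lambda})$ and $y_2 < y_1$ (because $b_2^* > b_1^*$), the $b_1^*$-dependent terms $B, D$ dominate the $b_2^*$-dependent ones $A, C$ uniformly in $b_2^* \in (b_1^*, 1)$, giving $c_1 \sim (c/r)\, y_1^{-\alpha_1(\bar{\lambda})}$. Substituting back, all three terms in the displayed formula are of order $1/b_1^*$ and they combine to $\frac{c(1-\alpha_1(\bar{\lambda}))}{r b_1^*}(1+o(1))$ uniformly in $b_2^*$, which tends to $-\infty$ since $\alpha_1(\bar{\lambda}) > 1$.

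For the second limit ($b_1^* \nearrow 1$), I write $\epsilon := 1 - b_1^*$ and $\delta := 1 - b_2^*$ with $0 < \delta < \epsilon \to 0$ and reparametrise by $t := \delta/\epsilon \in (0, 1]$. Now all four summands in \eqref{c_1_eq} vanish, and a careful balancing yields $c_1 \sim \epsilon^{-\alpha_1(\bar{\lambda})} R(t)$ for an explicit rational function $R$. The structural facts to verify are that $R(t) < 0$ on $(0,1]$ and $|R(t)|$ is bounded below by a positive constant, uniformly in $t$. Monotonicity in $t$ of the numerator and denominator of $R$ (which in turn uses that $\alpha_1$ is decreasing in $\lambda$) reduces these to checking the boundary values $t = 0$ and $t = 1$, where condition \eqref{existence_thm_cond} delivers the required strict inequalities. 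Consequently the middle and third terms in the displayed formula for $u_p(b_1^*+)$ are each of order $1/\epsilon$ with opposite signs, and they combine to leave a strictly positive leading coefficient (equal to $\alpha_1(\bar{\lambda})(\bar{\lambda}-c)/r$ at $t=0$ and $\alpha_1(\ubar{\lambda})(\ubar{\lambda}-c)/r$ at $t=1$), whence $\inf_{b_2^*} u_p(b_1^*+, b_1^*, b_2^*) \to +\infty$.

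The main obstacle is the uniformity in $b_2^*$ in the second limit. The middle and third terms in $u_p(b_1^*+)$ each blow up at rate $1/\epsilon$ with opposite signs, and proving that their sum has a strictly positive leading coefficient — uniformly in $t \in (0,1)$ — requires a careful sign/magnitude analysis of the rational function $R(t)$; it is here that condition \eqref{existence_thm_cond} enters crucially. The first limit is comparatively routine, since the dominant order of $c_1$ is essentially independent of $b_2^*$ and the sign of the leading coefficient follows at once from $\alpha_1(\bar{\lambda}) > 1$.
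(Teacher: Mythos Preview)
Your route—reduce $u_p(b_1^*+)$ to a closed form in $c_1$ and then control $c_1$ uniformly in $b_2^*$—is the same as the paper's. The paper, however, packages the $b_2^*$-uniformity into an auxiliary monotonicity lemma (Lemma~\ref{c_1 lemma}): for $b_1^*\le c/\bar\lambda$ it shows $c_1$ is strictly decreasing in $b_2^*$ and evaluates at $b_2^*=1$ to get a lower bound; for $b_1^*\ge c/\bar\lambda$ it splits the numerator of \eqref{c_1_eq} into its two summands and bounds them at $b_2^*=b_1^*$ and $b_2^*=1$ respectively. These yield explicit $b_2^*$-free inequalities for $u_p(b_1^*+)$ which are then sent to $\pm\infty$, with \eqref{existence_thm_cond} entering only in the final step of the second limit. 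Your asymptotic parametrisation by $t=\delta/\epsilon$ is a legitimate alternative, but it is heavier and your writeup leaves two genuine gaps.

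First, for $b_1^*\searrow 0$ your stated asymptotic $c_1\sim (c/r)\,y_1^{-\alpha_1(\bar\lambda)}$ is off by a factor $1/b_1^*$: in fact $B/D=\frac{c-b_1^*\bar\lambda}{b_1^* r}\,y_1^{-\alpha_1(\bar\lambda)}$, which is exactly Lemma~\ref{c_1 lemma}(a). With your exponent the middle term would be $O(1)$ and the three terms would combine to $\frac{c(1-\alpha_2(\bar\lambda))}{r b_1^*}\to+\infty$, the wrong sign; so either the exponent or the final coefficient in your sketch is a slip. Note also that the two-sided asymptotic for $c_1$ is \emph{not} uniform in $b_2^*$ (the denominator $C+D$ carries a $b_2^*$-dependent factor ranging over $[1-\rho,1]$ with $\rho=\frac{\alpha_1(\ubar\lambda)-\alpha_1(\bar\lambda)}{\alpha_1(\ubar\lambda)-\alpha_2(\bar\lambda)}$); only the one-sided lower bound $c_1\ge B/D$ is uniform, and that is what you need for an upper bound on $u_p$.

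Second, and more substantively, for $b_1^*\nearrow 1$ you verify $R(t)<0$ and $|R|$ bounded below via endpoint checks, and you compute the leading coefficient $L(t)$ of $u_p$ at $t=0$ and $t=1$, but you never show $L(t)>0$ for all $t\in(0,1)$. Monotonicity of the numerator and denominator of $R$ does not give monotonicity of $R$ (nor of $L$), so the endpoint values of $L$ do not control the interior. What you actually need is that your uniform lower bound $|R(t)|\ge |N(1)|/D(0)$ exceeds the threshold $\frac{-\alpha_2(\bar\lambda)(\bar\lambda-c)}{r(\alpha_1(\bar\lambda)-\alpha_2(\bar\lambda))}$; this does hold (it reduces, via \eqref{existence_thm_cond}, to $\alpha_1(\bar\lambda)(\alpha_1(\ubar\lambda)-\alpha_2(\bar\lambda))>\alpha_1(\bar\lambda)-\alpha_2(\bar\lambda)$), but it is a separate computation that your sketch omits. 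The paper sidesteps this entirely by producing a single $b_2^*$-free upper bound on $c_1$ from the outset.
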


\subsubsection{The proof}\label{sec:the-proof-exist}

\begin{proof}(Of Theorem \ref{thm_eq_cand_ex}.) 
The idea of the proof is to establish existence of a threshold strategy pair $(b_1^*,b_2^*)$ satisfying the conditions of Theorem \ref{ver_theorem}. 
The proof consists of several parts. Here  we establish existence of a pair $(b_1^*,b_2^*)$ with 
$0<b_1^*<b_2^*<1$ and corresponding functions 
$u(p)=u(p,b_1^*,b_2^*)$ and 
$v(p)=v(p,b_1^*,b_2^*)$ such that \eqref{ODE_Employee} and \eqref{u_ODE}, as well as \eqref{mainthmcond2} and \eqref{mainthmcond4} hold. 
The remaining conditions are established in Appendix \ref{app:ODE-exist}. In particular, \eqref{mainthmcond1} follows from Proposition \ref{u>0_prop} and 
\eqref{mainthmcond3} follows from Proposition \ref{dec_f}.

Consider a pair $(b_1^*,b^*_2)$ with $0<b_1^*<b^*_2<1$ and let $u(p,b_1^*,b^*_2)$ and $u(p,b_1^*,b^*_2)$ be given by \eqref{general-sol-v} and \eqref{value_employer} with the constants $c_i,k_i,i=1,...,4$
determined as in Sections \ref{sec:obs-cont-thres} and  \ref{sec:obs-stop-thres}. Then all we have left to do to is to show that the pair $(b_1^*,b_2^*)$ can be chosen so that
\begin{align}
u_p(b^*_1+,b^*_1,b^*_2)&=0, \label{smooth_cond}\\
b^*_2(1-b^*_2)v_p(b^*_2-,b^*_1,b^*_2))-\frac{\bar{\lambda}-\ubar{\lambda}}{\ubar{\lambda}}&=0. \label{after_smooth_cond}
\end{align}
To this end we introduce the notation 
\begin{align*}
f(b_1,b_2)&= u_p (b_1+,b_1,b_2),\\
g(b_1,b_2)&= \arctan\left(b_2(1-b_2)v_p(b_2-,b_1,b_2)-\frac{\bar{\lambda}-\ubar{\lambda}}{\ubar{\lambda}}\right),
\end{align*}
for an arbitrary threshold strategy pair $(b_1,b_2)\in A:= \{(x,y)\in \R^2:0<x<y<1\}$.

Using Lemma \ref{lim_b_lemma}, it is easy to see that there exist constants $0<\ubar{b}_1<\bar{b}_1<1$ such that:
(i) $f(\ubar{b}_1,b_2)<0$ for all $b_2>\ubar{b}_1$, and 
$f(\bar{b}_1,b_2)>0$ for all $b_2>\bar{b}_1$, and 
(ii) $f$ is continuous on the set $\tilde{A}:=A\cap ([\ubar{b}_1,\bar{b}_1]\times [0,1])$. 

Fix two such values $\ubar{b}_1$ and $\bar{b}_1$ (arbitrarily). We can now find a continuous extension of $f$ on the whole rectangle 
$[\ubar{b}_1,\bar{b}_1]\times [0,1]$ by
\begin{align*}
\tilde{f}(b_1,b_2)=\begin{cases}
f(b_1,b_2),\quad (b_1,b_2)\in \tilde A,\\
f(b_1,b_1),\quad (b_1,b_2)\not\in \tilde A.
\end{cases}
\end{align*}
We conclude that $\tilde{f}$ is continuous on $[\ubar{b}_1,\bar{b}_1]\times [0,1]$
with the properties that 
$f(\ubar{b}_1,b_2)<0$ for all $b_2 \in [0,1]$ 
and 
$f(\bar{b}_1,b_2)>0$ for all $b_2 \in [0,1]$.

Using Equation \eqref{lim_p_to_1} and \eqref{lim_p_b} we find a continuous extension of $g$ on the whole rectangle 
$[\ubar{b}_1,\bar{b}_1]\times [0,1]$ by
\begin{align}\label{asfjlkqwfuoveq}
\tilde{g}(b_1,b_2)=
\begin{cases}
g(b_1,b_2), \quad b_1<b_2,\\
\frac{\pi}{2},\quad\quad\quad\quad b_1\geq b_2.
\end{cases} 
\end{align}
Based on Lemma \ref{p_hat_ex_lemma} (in particular the left hand side inequality of \eqref{lemma_ex_p_eq}) we may now conclude that:
$\tilde{g}(b_1,1)<0$, for any $b_1 \in [\ubar{b}_1,\bar{b}_1]$ and 
$\tilde{g}(b_1,0)= \pi/2 >0$, for any $b_1 \in [\ubar{b}_1,\bar{b}_1]$.

The conclusions noted for $\tilde{f}$ and $\tilde{g}$ imply that 
we may use the Poincaré-Miranda theorem (cf. \cite{poincaremiranda2019}). 
In particular, it implies that there exists a pair $(b^*_1,b^*_2)\in [\ubar{b}_1,\bar{b}_1]\times [0,1]$ 
such that 
\begin{align}\label{asrlqrn}
\tilde{f}(b^*_1,b^*_2)=\tilde{g}(b^*_1,b^*_2)=0.
\end{align} 
Moreover, using \eqref{asfjlkqwfuoveq} and $\tilde{g}(b_1,1)<0$ (cf. above) we obtain $\tilde{g}(b_1,b_2)  \neq 0$ for $(b_1,b_2) \not\in \tilde A$, and hence 
$(b^*_1,b^*_2)\in \tilde A  \subset A$, i.e., $0<b^*_1<b^*_2<1$. 

It is now directly seen by the definitions of $f,\tilde f, g$ and $\tilde g$ that the pair $(b^*_1,b^*_2)$ satisfying \eqref{asrlqrn} is such that also 
\eqref{smooth_cond}--\eqref{after_smooth_cond} hold and we are done. 
\end{proof}

\section{Weak formulation}\label{sec:weak}
The purpose of this section is to consider a more general class of admissible control strategies compared to that of the strong formulation in Section \ref{Strong_sol_approach}. To this end we consider here a \textit{weak formulation} of our game based measure changes and Girsanov's theorem.  
We remark that this formulation is closely related to \cite{erik2022}, where a similar weak solution approach is used for an optimal control problem with discretionary stopping. The main finding of the present section is that the double threshold equilibrium of Theorem \ref{ver_theorem} is a Nash equilibrium also in the weak formulation. 

Let $(\Omega, \mathcal{A},\mathbb{P})$ be a probability space supporting a one-dimensional Brownian motion $(X_t)$ and a Bernoulli random variable $\theta$ with $\mathbb{P}(\theta=1)=p\in (0,1)$. 
Denote by $(\mathcal{F}_t^X)$ the smallest right continuous filtration to which $(X_t)$ is adapted.  
Define the terminal filtration according to $\mathcal{F}^X_\infty:=\sigma\left(\bigcup_{0\leq t\leq \infty}\mathcal{F}_t^X\right)$.
Define $(\mathcal{F}_t^{X,\theta})$ and $\mathcal{F}_\infty^{X,\theta}$  analogously. 

\begin{definition}[Admissibility in the weak formulation] \label{def:admissibility-weak} \enskip

\begin{itemize}
\item A process $(\lambda_t)$ is said to be an admissible control process if it has RCLL paths, is adapted to $(\mathcal{F}_t^{X,\theta})$, and takes values in 
$\{\bar{\lambda},\ubar{\lambda}\}$. The set of admissible control processes is denoted by $\tilde{\mathbb{L}}$. 

\item A stopping time $\tau$ is said to be an admissible stopping strategy if it is adapted to $(\mathcal{F}^X_t)$. 
The set of admissible stopping strategies is denoted by $\mathbb{T}$. 

\end{itemize}

\end{definition}

\begin{remark} The set of admissible stopping strategies in the weak formulation is analogous to set of admissible stopping strategies in the strong formulation. 
The main difference is instead that we define $(X_t)$ as a Brownian motion in the weak formulation, whereas $(X_t)$ is given by \eqref{control-process} in the strong formulation. 
\end{remark}

Now for any given control process $(\lambda_t)\in\tilde{\mathbb{L}}$ we define the process $(W^{\lambda}_t)$ according to 
\begin{align}\label{KL:X-in-weak}
X_t = \int_0^t(\theta\lambda_s-c)ds+W^{\lambda}_t.
\end{align}
By Girsanov's theorem (\cite[Chapter 3.5]{Karatzas2}) there exists a measure $\mathbb{P}_t^\lambda\sim \mathbb{P}$ on 
$(\Omega,(\mathcal{F}_t^{X,\theta}))$, given by 
\begin{align} \label{KL:RD-der}
\frac{d\mathbb{P}_t^{\lambda}}{d\mathbb{P}}\bigg|_{\mathcal{F}_t^{X,\theta}}= \exp\left(\int_0^t(\theta\lambda_t-c)dX_t-\frac{1}{2}\int_0^t(\theta\lambda_t-c)^2dt \right):=\Lambda^\lambda_t,
\end{align}
such that $\{W_t^\lambda,\mathcal{F}_t^{X,\theta};0\leq t\leq T\}$ is a Brownian motion on $(\Omega, \mathcal{F}^{X,\theta}_T,\mathbb{P}_T^\lambda)$ for each fixed $T\in [0,\infty)$. 
Moreover, we note that $(\Lambda^\lambda_t$) is a martingale by Novikov's condition. 
Thus, the theory of the Föllmer measure gives us the existence of a measure $\mathbb{P}^\lambda$ on $\mathcal{F}_\infty^{X,\theta}$, which satisfies $\mathbb{P}^\lambda(A)=\mathbb{P}_t^\lambda(A)$ for every $t\in[0,\infty)$ and $A\in\mathcal{F}^{X,\theta}_t$; see \cite[Section 2]{erik2022}, and also \cite{Fllmer1972TheEM} and \cite[p.192]{Karatzas2}.

This allows us to give definitions of the reward functions based on measure changes.

\begin{definition}\label{alternative_reward_def}
Given a strategy pair $\left(\tau,(\lambda_t)\right)\in \mathbb{T}\times\tilde{\mathbb{L}}$ we define the payoff of the stopper as
\begin{align}
\tilde{\mathcal{J}}^1\left(\tau,(\lambda_t),p\right)=\E^{\mathbb{P}^\lambda}\left[\int_0^\tau e^{-rs}\left(\theta\lambda_s-c\right)ds\right],
\end{align}
and the payoff of the controller as
\begin{align}
\tilde{\mathcal{J}}^2\left(\tau,(\lambda_t),p\right)=\E^{\mathbb{P}^\lambda}\left[\int_0^\tau e^{-rs}\left(c-(\lambda_s-\ubar{\lambda})^2\right)ds\Bigg|\theta=1\right].
\end{align}
\end{definition}

\begin{remark}\label{mot:weak-rewards} Let us motivate Definition \ref{alternative_reward_def} further. 
In the strong formulation (Section \ref{Strong_sol_approach}) we consider a fixed probability measure and define the controlled process $(X_t)$ in terms of a control process $(\lambda_t)$ and a given Brownian motion $(W_t)$; cf. \eqref{control-process}. 
In the present weak formulation we instead define $(X_t)$ as a Brownian motion, and let the control process $(\lambda_t)$ imply a measure change 
$\mathbb{P}^\lambda$, such that $W^\lambda$ defined by \eqref{KL:X-in-weak} is a Brownian motion under this measure. 
By comparing the resulting weak formulation equation for $(X_t)$ (i.e., \eqref{KL:X-in-weak}) and the equation for $(X_t)$ in the strong formulation (in \eqref{control-process}) 
the connection between the formulations becomes clear.  
\end{remark} 

The Nash equilibrium is now defined in the usual way: 

\begin{definition}\label{weak_sol_NE}
 A pair of admissible strategies $(\tau^*,(\lambda^*_t)) \in \mathbb{T} \times \mathbb{\tilde{L}}$ is a said to be a Nash equilibrium if 
\begin{align}\label{NE_cond2}
\begin{cases}
\tilde{\mathcal{J}}^1\left(\tau^*,(\lambda^*_t),p\right)\geq \tilde{\mathcal{J}}^1\left(\tau,(\lambda^*_t),p\right),\\
\tilde{\mathcal{J}}^2\left(\tau^*,(\lambda^*_t),p\right)\geq \tilde{\mathcal{J}}^2\left(\tau^*,(\lambda_t),p\right),
\end{cases}
\end{align}
for any pair of deviation strategies $(\tau,(\lambda_t)) \in \mathbb{T} \times \tilde{\mathbb{L}} $. 
\end{definition}
In line with the strong formulation solution approach (cf. \eqref{KL:wqf323avsf1-1} and \eqref{KL:wqf323avsf1-2}) we define a double threshold strategy pair $(\tau_{b_1^*},\lambda_{b_2^*})$, where $0<b_1^*<b_2^*<1$, by
\begin{align}
\tau_{b_1^*}:=\inf\{t\geq 0:P_t\leq b_1^*\}\label{tau_weak}\\
\lambda_{b_2^*}(P_t)=\ubar{\lambda}+(\bar{\lambda}-\ubar{\lambda})I_{\{P_t<b_2^*\}}\label{lambda_weak},
\end{align}
where $(P_t)$ is (in analogy with \eqref{XP_SDE}) given by the SDE
\begin{align}\label{P-equation}
dP_t=\lambda^*(P_t)P_t(1-P_t)(c-\lambda^*(P_t)P_t)dt+\lambda^*(P_t)P_t(1-P_t)dX_t, \enskip P_0=p.
\end{align}
(Recalling that $(X_t)$ is a Brownian motion we find that \eqref{P-equation} has a strong solution using analogues arguments as in the strong solution formulation; cf. Proposition \ref{prop_strong_ex}). 

The main result of the present section is that the double threshold equilibrium investigated in the strong formulation is also an equilibrium in the  weak formulation. Note that this implies that equilibrium existence in the weak formulation is guaranteed by the same parameter conditions as in Theorem \ref{thm_eq_cand_ex}.

\begin{theorem}
Suppose $(b^*_1,b^*_2)$ with $0<b^*_1<b^*_2<1$ are such that the conditions of Theorem \ref{ver_theorem} hold (implying that they correspond to a double threshold equilibrium 
\eqref{KL:wqf323avsf1-1}--\eqref{KL:wqf323avsf1-2}, in the strong formulation). Then, $(\tau_{b^*_1},\lambda_{b^*_2})$ given by 
\eqref{tau_weak}, \eqref{lambda_weak} and \eqref{P-equation} is an equilibrium in the weak formulation (Definition \ref{weak_sol_NE}). 
\end{theorem}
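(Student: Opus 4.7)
My strategy is to leverage the structure of the weak formulation to reduce, as much as possible, to the strong formulation. Concretely, under the measure $\mathbb{P}^{\lambda^*}$ the weak formulation becomes distributionally equivalent to the strong formulation with controller strategy $\lambda^*$, which handles the stopper's side of the Nash inequality immediately from Theorem \ref{ver_theorem}. The controller's side then requires a verification argument under $\mathbb{P}^\lambda$, but this essentially re-runs the proof of Theorem \ref{ver_theorem} because the HJB-type inequality used there is pointwise in $(p,\lambda)$ and is therefore indifferent to whether the deviation strategy $(\lambda_t)$ is Markov.

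\textbf{Stopper side.} First I would verify that under $\mathbb{P}^{\lambda^*}$ (Girsanov, via \eqref{KL:RD-der}), $W^{\lambda^*}$ is a Brownian motion, $\theta$ retains its Bernoulli$(p)$ distribution (because $\Lambda^{\lambda^*}_0=1$), and is independent of $W^{\lambda^*}$ (because the latter is a BM in the filtration $(\mathcal{F}^{X,\theta}_t)$ which contains $\sigma(\theta)$). Since $\lambda^*=\lambda_{b_2^*}(P_t)$ is $(\mathcal{F}^X_t)$-adapted through \eqref{P-equation}, and since $(\mathcal{F}^X_t)$ and $\mathbb{T}$ are the same in both formulations, the joint law of $(\theta,X)$ under $\mathbb{P}^{\lambda^*}$ in the weak formulation coincides with its joint law under $\mathbb{P}$ in the strong formulation with controller $\lambda^*$. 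Hence $\tilde{\mathcal{J}}^1(\tau,\lambda^*,p)=\mathcal{J}^1(\tau,\lambda^*,p)$ for every $\tau\in\mathbb{T}$, and the first inequality in \eqref{NE_cond2} follows directly from Theorem \ref{ver_theorem}.

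\textbf{Controller side.} Fix a deviation $(\lambda_t)\in\tilde{\mathbb{L}}$ and work under $\mathbb{P}^\lambda$. Substituting $dX_t=(\theta\lambda_t-c)dt+dW^\lambda_t$ into \eqref{P-equation} and using $\theta=1$ (the conditioning in $\tilde{\mathcal{J}}^2$) yields
$$dP_t=\lambda^*(P_t)P_t(1-P_t)(\lambda_t-\lambda^*(P_t)P_t)dt+\lambda^*(P_t)P_t(1-P_t)dW^\lambda_t,$$
which is exactly \eqref{P_def_strong_sol} with the Markov function $\lambda(P_t)$ replaced by the possibly non-Markov process $\lambda_t$. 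I then apply It\^o's formula to
$$N_t=e^{-r(t\wedge\tau_{b^*_1})}v(P_{t\wedge\tau_{b^*_1}})+\int_0^{t\wedge\tau_{b^*_1}}e^{-rs}(c-(\lambda_s-\ubar{\lambda})^2)ds$$
and, after subtracting the ODE \eqref{ODE_Employee} satisfied by $v$ at the feedback $\lambda^*(P_t)$, identify the drift as
$$e^{-rt}\bigl[(\lambda_t-\lambda^*(P_t))\lambda^*(P_t)P_t(1-P_t)v_p(P_t)+(\lambda^*(P_t)-\ubar{\lambda})^2-(\lambda_t-\ubar{\lambda})^2\bigr].$$
The computation of Section \ref{sec:controller-perspec} together with conditions \eqref{mainthmcond3}--\eqref{mainthmcond4} shows that this quantity is non-positive for every $(p,\lambda)\in\bigl((b^*_1,b^*_2)\cup(b^*_2,1)\bigr)\times\{\ubar{\lambda},\bar{\lambda}\}$. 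Since $\lambda_t(\omega)\in\{\ubar{\lambda},\bar{\lambda}\}$ for every $(t,\omega)$, the drift of $(N_t)$ is non-positive regardless of the pathwise structure of $(\lambda_t)$. Optional sampling combined with dominated convergence (valid because $v$ is bounded, $r>0$, the running cost is bounded, and $e^{-r(\tau_{b^*_1}\wedge n)}v(P_{\tau_{b^*_1}\wedge n})\to 0$) gives $v(p)=N_0\geq \E^{\mathbb{P}^\lambda}[N_{\tau_{b^*_1}}\mid\theta=1]=\tilde{\mathcal{J}}^2(\tau_{b^*_1},\lambda,p)$, with equality when $\lambda_t\equiv\lambda^*(P_t)$.

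\textbf{Main obstacle.} The conceptual core is the recognition that the controller-side argument of Theorem \ref{ver_theorem} never used more than the pointwise value $\lambda_t(\omega)\in\{\ubar{\lambda},\bar{\lambda}\}$; the Markov feedback structure was used only to define $(P_t)$, which here is still driven by the fixed equilibrium feedback $\lambda^*$. Once this observation is in place, the remaining technical items---the martingale property of $(\Lambda^\lambda_t)$ via Novikov's condition (trivially, since all integrands are bounded) and the consistent extension of $\mathbb{P}^\lambda$ to $\mathcal{F}^{X,\theta}_\infty$ via the F\"ollmer construction---are routine and already set up in the statement of the formulation.
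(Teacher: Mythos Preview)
Your proposal is correct and follows essentially the same route as the paper: identify the dynamics of $(P_t)$ under $\mathbb{P}^\lambda$ as the weak-formulation analogue of \eqref{P_def_strong_sol}, observe that the equilibrium values coincide across formulations, and then re-run the verification argument of Theorem \ref{ver_theorem}, whose controller-side HJB inequality is pointwise in $(p,\lambda)$ and hence insensitive to whether $(\lambda_t)$ is Markov. The only minor difference is on the stopper side: the paper simply remarks that Proposition \ref{filter_prop} carries over and re-runs the stopper verification under $\mathbb{P}^{\lambda^*}$, whereas you argue directly via distributional equivalence of $(\theta,X)$ under $\mathbb{P}^{\lambda^*}$ with the strong formulation and invoke Theorem \ref{ver_theorem} as a black box---a slightly cleaner shortcut, but the same content.
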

\begin{proof} In this proof we write $\lambda^*=\lambda_{b_2^*}$. For any admissible deviation strategy $(\lambda_t)$ it follows from \eqref{KL:X-in-weak} and \eqref{P-equation} that 
$(P_t)$ has the representation 
\begin{align}\label{KL:P-def-deviation-weak}
dP_t=\lambda^*(P_t)P_t(1-P_t)(\theta\lambda_t-\lambda^*(P_t)P_t)dt+\lambda^*(P_t)P_t(1-P_t)dW^\lambda_t,
\end{align}
where we recall that $(W^\lambda_t)$ is a Brownian motion under the measure $\mathbb{P}^\lambda$. We remark that $(P_t)$ depends in this sense on both $\lambda^*$ and $(\lambda_t)$ when the controller deviates from the equilibrium.

Note that the representation of  $(P_t)=(P_t^{\lambda^*,\lambda^*})$ in \eqref{KL:P-def-deviation-weak} is analogous to \eqref{P_def_strong_sol} in the strong formulation. Moreover, it is directly seen that the value functions are the same for both formulations in the case of no deviation, i.e.,
\begin{align*}
 \mathcal{J}^{1}\left(\tau^*,\lambda^*(P^{\lambda^*,\lambda^*}),p\right)=\mathcal{\tilde{J}}^1\left(\tau^*,\lambda^*,p\right),\\
 \mathcal{J}^{2}\left(\tau^*,\lambda^*(P^{\lambda^*,\lambda^*}),p\right)=\mathcal{\tilde{J}}^2\left(\tau^*,\lambda^*,p\right).
\end{align*}
Hence, 
$u(p)=\mathcal{\tilde{J}}^1\left(\tau^*,\lambda^*,p\right)$ and
$v(p)=\mathcal{\tilde{J}}^2\left(\tau^*,\lambda^*,p\right)$, with $u(p)$ and $v(p)$ as in Theorem \ref{ver_theorem}. 
Using this it is directly checked that the proof of Theorem \ref{ver_theorem} can be adjusted so that it shows that $(b_1^*,b_2^*)$ corresponds to an equilibrium also in the present weak formulation.  Indeed, this requires only minor adjustments including that $(P_t)$ is here given by \eqref{KL:P-def-deviation-weak}, 
and that the deviation strategies are allowed to be processes in $\tilde{\mathbb{L}}$. 
Particularly, note that Proposition \ref{filter_prop} holds also in this case. 
\end{proof}

\noindent \textbf{Acknowledgment} The authors are grateful to Erik Ekström at Uppsala University for discussions regarding games of the kind studied in the present paper, and suggestions that lead to improvements of this manuscript.

\appendix

\section{Properties of $(P_t)$ in the strong formulation} \label{app:SDE}
\begin{proposition}\label{prop_strong_ex}
 The SDE \eqref{P_def_strong_sol} has a strong solution $(P_t) = \left(P^{\lambda,\lambda^*}_t\right)$ for any admissible pair of Markov strategies $(\lambda^*,\lambda) \in \mathbb{L}^2$. 
\end{proposition}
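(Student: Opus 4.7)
The plan is to construct a strong solution to \eqref{P_def_strong_sol} by combining boundary absorption at $\{0,1\}$ with a localized existence argument in the interior, exploiting the RCLL two-valued structure of $\lambda^*$ and $\lambda$.

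First, I would observe that the drift $b(p) = \lambda^*(p)p(1-p)(\theta\lambda(p)-\lambda^*(p)p)$ and the diffusion $\sigma(p) = \lambda^*(p)p(1-p)$ are bounded Borel-measurable on $[0,1]$ and both vanish at $p \in \{0,1\}$. Hence the boundary points are absorbing for any continuous solution, so it suffices to construct a strong solution up to the first hitting time of $\{0,1\}$ and then extend by keeping the value constant thereafter.

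Second, on each compact subinterval $[\epsilon, 1-\epsilon] \subset (0,1)$, the diffusion is uniformly bounded below by $\ubar{\lambda}\,\epsilon(1-\epsilon) > 0$. Factoring the SDE as $dP_t = \sigma(P_t)\bigl[dW_t + \mu(P_t)\,dt\bigr]$ with the bounded function $\mu(p) = \theta\lambda(p)-\lambda^*(p)p$ brings the equation into a form amenable to strong-existence theorems for 1D SDEs with bounded measurable drift and nondegenerate diffusion. Two concrete routes are: (a) a Zvonkin-type transformation that absorbs the drift into a driftless equation followed by a Yamada--Watanabe pathwise uniqueness argument; or (b) exploiting directly the two-valued structure of $(\lambda^*, \lambda)$ by decomposing $[\epsilon, 1-\epsilon]$ into the (at most countably many) maximal subintervals on which both $\lambda^*$ and $\lambda$ are constant---on each such subinterval the coefficients of \eqref{P_def_strong_sol} are polynomial in $p$ with strictly positive diffusion, so classical It\^o theory provides a unique strong solution adapted to the filtration generated by $(W, \theta)$ up to the first exit from the subinterval; these local solutions are then concatenated at successive exit times, where the next constant-coefficient region is uniquely determined by the RCLL property of $\lambda^*$ and $\lambda$.

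Third, letting $\epsilon \to 0$ along a sequence and using absorption at $\{0,1\}$ yields a strong solution on all of $[0,\infty)$. The main obstacle is precisely the localized construction in the second step: since $\sigma$ is only Borel-measurable (because $\lambda^*$ is merely RCLL and therefore discontinuous in general), standard strong-existence theorems that require Lipschitz or BV diffusion do not apply off the shelf. Making the piecewise concatenation rigorous requires ruling out pathological accumulation of transitions, namely that $(P_t)$ does not traverse infinitely many jump points of $(\lambda^*, \lambda)$ within finite time in a way that obstructs the construction. This is controlled by the one-dimensional diffusive structure: the occupation measure of $(P_t)$ on compacts of $(0,1)$ is absolutely continuous with respect to Lebesgue measure (by nondegeneracy of the quadratic variation), so the countable jump set of $(\lambda^*, \lambda)$ has zero occupation time, rendering the stochastic integral $\int_0^t \sigma(P_s)\,dW_s$ unambiguous and the concatenated process a genuine strong solution of \eqref{P_def_strong_sol}.
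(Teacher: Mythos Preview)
Your overall strategy---localize to $[\epsilon,1-\epsilon]$ where the diffusion is nondegenerate, obtain a strong solution there, then handle the boundary---matches the paper's architecture, but both routes you propose for the localized step have gaps. Route (b) fails as a construction: a nondegenerate one-dimensional diffusion does not pass cleanly from one interval to the next upon hitting a jump point of $\lambda^*$; it oscillates, re-entering both adjacent intervals infinitely often immediately after the hitting time, so there is no well-defined ``next region'' and the successive-exit-time concatenation does not yield a process on all of $[0,T]$. Your occupation-time remark is correct but is a statement \emph{about} solutions, not a construction of one. (A side note: a two-valued RCLL function on $[0,1]$ can have only finitely many jumps, since an accumulation point of jumps would destroy either right-continuity or the left limit; so your ``countably many'' concern is moot, yet this still does not rescue the concatenation.) Route (a) is closer in spirit, but a Zvonkin transform followed by the classical Yamada--Watanabe comparison requires a modulus-of-continuity condition on the diffusion, which the discontinuous $\sigma(p)=\lambda^*(p)p(1-p)$ does not satisfy.

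The paper supplies exactly the missing ingredient: on $[\epsilon,1-\epsilon]$ it combines weak existence (standard for bounded measurable coefficients) with Nakao's pathwise-uniqueness theorem for one-dimensional SDEs with bounded measurable diffusion bounded away from zero, and then the Yamada--Watanabe principle delivers a strong solution. For the boundary the paper goes further than your absorption device: via Feller's test (Lemma~\ref{lem_bound_cond}) it shows that $P_t$ never reaches $0$ or $1$ in finite time, which is stronger than needed for mere existence but is used elsewhere in the paper.
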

\begin{proof}
Consider the interval $I=[\epsilon,1-\epsilon]$, for a small arbitrary  constant $\epsilon>0$. Then the diffusion coefficient is uniformly bounded away from zero in $I$. Thus we obtain for both cases $\{\theta=0\}$ and $\{\theta=1\}$ that:
(i) a weak solution $(P_t)$ to \eqref{P_def_strong_sol} exists (cf. e.g., \cite[Ch. 5]{Karatzas2}), and 
(ii) a solution to \eqref{P_def_strong_sol} is pathwise unique in $I$ (see \cite{Nakao1972}). By Lemma \ref{lem_bound_cond}, we obtain that $(P_t)$ cannot reach $0$ or $1$ in finite time. Hence, \eqref{P_def_strong_sol} admits a strong solution $(P_t)$ by \cite[Corollary 3.23]{Karatzas2}. 
\end{proof}

\begin{lemma}\label{lem_bound_cond}
For any pair $(\lambda^*,\lambda)\in\mathbb{L}^2$ it holds for $(P_t) = \left(P^{\lambda,\lambda^*}_t\right)$ given by \eqref{P_def_strong_sol} that
\begin{align}
\tau_0&:=\inf\{t\geq 0:P_t=0\}=\infty, \label{tau_0_infty}\\
\tau_1&:=\inf\{t\geq 0:P_t=1\}=\infty. \label{tau_1_infty}
\end{align}
\end{lemma}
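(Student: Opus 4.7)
The plan is to remove the degeneracies of the diffusion coefficient at the boundaries $0$ and $1$ by an Itô transformation, and then exploit the fact that the transformed process has bounded coefficients. The natural candidate is the logit transformation $f(p):=\log(p/(1-p))$, which is a $C^\infty$ diffeomorphism from $(0,1)$ onto $\R$, so that $P_t$ reaches $\{0,1\}$ in finite time precisely when $Y_t:=f(P_t)$ explodes in finite time.

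Let $(P_t)$ be any solution to \eqref{P_def_strong_sol} and introduce the localizing exit times $\tau_n:=\inf\{t\geq 0:P_t\leq 1/n\}$ and $\sigma_n:=\inf\{t\geq 0:P_t\geq 1-1/n\}$. On $[0,\tau_n\wedge\sigma_n)$ the process $(P_t)$ takes values in a compact subset of $(0,1)$ and I can apply Itô's formula to $Y_t=f(P_t)$. Using $f'(p)=1/(p(1-p))$ and $f''(p)=(2p-1)/(p^2(1-p)^2)$, a direct computation shows that the factor $P_t(1-P_t)$ in both the drift and the diffusion of \eqref{P_def_strong_sol} cancels against $f'$, and the second-order term cancels with the remaining contribution from the drift, leaving the clean identity
\begin{align*}
dY_t=\left(\theta\lambda^*(P_t)\lambda(P_t)-\tfrac{1}{2}(\lambda^*(P_t))^2\right)dt+\lambda^*(P_t)\,dW_t.
\end{align*}
Since $\lambda,\lambda^*\in\{\ubar{\lambda},\bar{\lambda}\}$ and $\theta\in\{0,1\}$, both coefficients of this SDE are uniformly bounded by a constant depending only on $\bar{\lambda}$.

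Fix $T>0$. Stopping at $\rho_n:=\tau_n\wedge\sigma_n\wedge T$ and using the boundedness of the coefficients together with the Burkholder--Davis--Gundy inequality, one obtains a bound of the form $\E[\sup_{t\leq T}|Y_{t\wedge\rho_n}|^2]\leq C_T$ for a constant $C_T$ independent of $n$. On the other hand, on the event $\{\tau_n\wedge\sigma_n\leq T\}$ we have by continuity of $(P_t)$ that $|Y_{\rho_n}|\geq \log(n-1)$. Markov's inequality therefore gives $\mathbb{P}(\tau_n\wedge\sigma_n\leq T)\to 0$ as $n\to\infty$. Since $\tau_n\nearrow\tau_0$ and $\sigma_n\nearrow\tau_1$ a.s., this forces $\tau_0\wedge\tau_1\geq T$ a.s., and as $T>0$ was arbitrary, \eqref{tau_0_infty}--\eqref{tau_1_infty} follow.

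I expect the main obstacle to be bookkeeping rather than anything conceptually hard: one must be a little careful because the lemma is invoked in the proof of Proposition \ref{prop_strong_ex} before strong existence has been established, so the argument must apply to an arbitrary weak solution on its own filtered space. This is automatic here because the Itô computation and the BDG bound only use integrability of the coefficients against the driving Brownian motion, which holds for any weak solution on $[0,\rho_n]$.
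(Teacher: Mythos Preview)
Your proof is correct and, in fact, considerably more elementary than the paper's. The It\^o computation for $Y_t=\log\!\bigl(P_t/(1-P_t)\bigr)$ is right: the $P_t(1-P_t)$ factors cancel exactly as you say, and the residual $P_t$-dependence in the drift disappears because $-(\lambda^*)^2P_t+\tfrac12(2P_t-1)(\lambda^*)^2=-\tfrac12(\lambda^*)^2$. From there the BDG/Markov bound and the identification $|Y_{\rho_n}|=\log(n-1)$ on $\{\tau_n\wedge\sigma_n\le T\}$ are routine and valid for any weak solution, which is exactly the generality needed in Proposition \ref{prop_strong_ex}.

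The paper takes a different route. Instead of transforming away the degeneracy, it dominates $(P_t)$ from below (respectively above) by an auxiliary diffusion $(\tilde P_t)$ (resp.\ $(\hat P_t)$) with the most unfavourable drift, invokes a comparison theorem, and then applies Feller's test for explosion via the scale function and speed measure to show that $\tilde P$ never hits $0$ and $\hat P$ never hits $1$. Your approach exploits the specific algebraic structure of \eqref{P_def_strong_sol} (the common $p(1-p)$ factor in drift and diffusion) to reduce directly to a bounded-coefficient SDE; this avoids both the comparison step and the boundary classification machinery. The paper's method, by contrast, is the standard template for one-dimensional boundary inaccessibility and would still apply if the logit transform did not yield bounded coefficients; but in the present case your argument is shorter, self-contained, and sidesteps any subtleties about applying comparison theorems with non-Lipschitz (piecewise constant) coefficients.
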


\begin{proof}
In order to prove \eqref{tau_0_infty}, it is suffices to show that
\begin{align*}
\tilde{\tau}_0&:=\inf\{t\geq 0:\tilde{P}_t=0\} = \infty,
\end{align*}
where $\tilde{P}_t$ solves the SDE
\begin{align}\label{tilde_SDE}
d\tilde{P}_t=-\bar{\lambda}^2\tilde{P}^2_t(1-\tilde{P}_t)dt+\lambda^*(\tilde{P}_t)\tilde{P}_t(1-\tilde{P}_t)dW_t,
\end{align}
with $\tilde{P_0}=P_0=p$; indeed, it follows by comparison (see \cite[Chapter IX.3]{revuz2013continuous}) that $\tilde{\tau}_0\leq \tau_0 $ a.s. 
(The existence of a strong solution to \eqref{tilde_SDE} is given by arguments similar to those in the proof of Proposition \ref{prop_strong_ex}.)

Since $\lambda^*$ is RCLL in $[0,1]$ (and is piece-wise constant), there exists a $z\in (0,1)$ such that $\lambda^*(p)=\lambda^*(z)$ for $p\in (0,z]$. 
With some calculations we now obtain that the scale function of \eqref{tilde_SDE}, is for $a \leq z$ given by
\begin{align*}
s'(a)
&=\exp\left(-2\int_z^a\frac{-\bar{\lambda}^2p^2(1-p)}{(\lambda^*(p))^2p^2(1-p)^2}dp\right)\\
&= \exp\left(-2\left(\frac{\bar{\lambda}}{\lambda^*(z)}\right)^2\int^z_a\frac{1}{(1-p)}dp\right)\\
&=C(z)\frac{1}{(1-a)^{2\left(\frac{\bar{\lambda}}{\lambda^*(z)}\right)^2}},
\end{align*} 
where $C(z)>0$, and density of the speed measure for $p\leq z$ is given by 
\begin{align*}
m(p)=\frac{2}{\lambda^*(z)^2p^2(1-p)^2s'(p)}.
\end{align*}
Using that $s'(x)$ is increasing for $x\in(0,1)$ we have
\begin{align*}
\int_0^zs'(a)\left(\int_a^zm(p)dp\right)da& \geq  C(z) \frac{2}{\lambda^*(z)^2(1-z)^2s'(z)}\int_0^z\left(\int_a^z\frac{1}{p^2}dp\right)da = \infty.
\end{align*}
Hence, $\tilde{\tau}_0=\infty$ follows from Feller's test for explosion, and 
\eqref{tau_0_infty} follows. 


For reasons similar to the proof of the previous statement it is sufficient to prove that 
$\hat{\tau}_1:=\inf\{t\geq 0:\hat{P}_t=1\} = \infty$
where
\begin{align}
d\hat{P}_t=\bar{\lambda}^2\hat{P}_t(1-\hat{P}_t)dt+\lambda^*(\hat{P}_t)\hat{P}_t(1-\hat{P}_t)dW_t.\label{hat_SDE}
\end{align}
We fix a $z\in (0,1)$ such that $\lambda^*(p)=\lambda^*(z)$ for $p \in [z,1)$. For \eqref{hat_SDE} and $ a \in (z,1)$, we have
\begin{align*}
s'(a)& =\exp\left(-2\int_z^a\frac{\bar{\lambda}^2p(1-p)}{(\lambda^*(p))^2p^2(1-p)^2}dp\right)\\
& =D_1(z)\left(\frac{1-a}{a}\right)^{2\left(\frac{\bar{\lambda}}{\lambda^*(z)}\right)^2},
\end{align*}
where $D_1(z)>0$, and for $p \in (z,1)$, we have 
\begin{align*}
m(p)=\frac{2}{\lambda^*(z)^2p^2(1-p)^2s'(p)}.
\end{align*}
Hence, for some positive constants $D_2(z),D_3(z),D_4(z)$, we have 
\begin{align*}
\int^1_zs'(a)\left(\int_z^am(p)dp\right)da&\geq D_2(z)\int_z^1(1-a)^{2\left(\frac{\bar{\lambda}}{\lambda^*(z)}\right)^2}\left(\int^a_z\frac{1}{(1-p)^{2\left(\frac{\bar{\lambda}}{\lambda^*(z)}\right)^2+2}}dp\right)da\\
&=D_3(z)\int_z^1\frac{1}{1-a}da-D_4(z)=\infty.
\end{align*}
Hence, $\hat{\tau}_1=\infty$ follows by Feller's test for explosion, and \eqref{tau_1_infty} follows.

\end{proof}

\section{Proofs of Lemmas \ref{p_hat_ex_lemma} and \ref{lim_b_lemma}}\label{app:lemmas}

\begin{proof} (of Lemma \ref{p_hat_ex_lemma}.)
Observe that 
\begin{align*}
b^*_2(1-b^*_2)v_p(b^*_2-,b_1^*,b_2^*)&=-k_1\left(\alpha_1(\bar{\lambda})\left(\frac{1-b^*_2}{b^*_2}\right)^{\alpha_1(\bar{\lambda})}-\alpha_2(\bar{\lambda})\left(\frac{1-b_1^*}{b_1^*}\right)^{\alpha_1(\bar{\lambda})-\alpha_2(\bar{\lambda})}\left(\frac{1-b^*_2}{b^*_2}\right)^{\alpha_2(\bar{\lambda})}\right)\\
&\enskip+\alpha_2(\bar{\lambda})\frac{c-(\bar{\lambda}-\ubar{\lambda})^2}{r}\left(\frac{1-b_1^*}{b_1^*}\right)^{-\alpha_2(\bar{\lambda})}\left(\frac{1-b^*_2}{b^*_2}\right)^{\alpha_2(\bar{\lambda})}.
\end{align*}
First we consider the limit $b^*_2\to 1$. For the first part we have that
\begin{align*}
&k_1\alpha_1(\bar{\lambda})\left(\frac{1-b^*_2}{b^*_2}\right)^{\alpha_1(\bar{\lambda})}\\
&\enskip=\frac{\left(\frac{(\bar{\lambda}-\ubar{\lambda})^2}{r}-\frac{\bar{\lambda}-\ubar{\lambda}}{\alpha_1(\ubar{\lambda})\ubar{\lambda}}\right)\left(\frac{1-b^*_2}{b^*_2}\right)^{-\alpha_2(\bar{\lambda})}+\frac{c-(\bar{\lambda}-\ubar{\lambda})^2}{r}\left(\frac{1-b_1^*}{b_1^*}\right)^{-\alpha_2(\bar{\lambda})}}
{\left(\frac{1-b^*_2}{b^*_2}\right)^{\alpha_1(\bar{\lambda})-\alpha_2(\bar{\lambda})}-\left(\frac{1-b_1^*}{b_1^*}\right)^{\alpha_1(\bar{\lambda})-\alpha_2(\bar{\lambda})}}\alpha_1(\bar{\lambda})\left(\frac{1-b^*_2}{b^*_2}\right)^{\alpha_1(\bar{\lambda})}\rightarrow 0,
\end{align*}
as $b^*_2\nearrow 1$, since $\alpha_1(\bar{\lambda}),-\alpha_2(\bar{\lambda})>0$.
We consider the remaining term. We obtain
\begin{align*}
&k_1\alpha_2(\bar{\lambda})\left(\frac{1-b_1^*}{b_1^*}\right)^{\alpha_1(\bar{\lambda})-\alpha_2(\bar{\lambda})}\left(\frac{1-b^*_2}{b^*_2}\right)^{\alpha_2(\bar{\lambda})}+\alpha_2(\bar{\lambda})\frac{c-(\bar{\lambda}-\ubar{\lambda})^2}{r}\left(\frac{1-b_1^*}{b_1^*}\right)^{-\alpha_2(\bar{\lambda})}\left(\frac{1-b^*_2}{b^*_2}\right)^{\alpha_2(\bar{\lambda})}\\
&\enskip=\alpha_2(\bar{\lambda})\frac{\frac{(\bar{\lambda}-\ubar{\lambda})^2}{r}-\frac{\bar{\lambda}-\ubar{\lambda}}{\alpha_1(\ubar{\lambda})\ubar{\lambda}}}
{\left(\frac{1-b^*_2}{b^*_2}\right)^{\alpha_1(\bar{\lambda})-\alpha_2(\bar{\lambda})}\left(\frac{1-b_1^*}{b_1^*}\right)^{\alpha_2(\bar{\lambda})-\alpha_1(\bar{\lambda})}-1}\\&\quad+\alpha_2(\bar{\lambda})\left(\frac{1-b^*_2}{b^*_2}\right)^{\alpha_2(\bar{\lambda})}\left(
\frac{\frac{c-(\bar{\lambda}-\ubar{\lambda})^2}{r}\left(\frac{1-b_1^*}{b_1^*}\right)^{-\alpha_2(\bar{\lambda})}}
{\left(\frac{1-b^*_2}{b^*_2}\right)^{\alpha_1(\bar{\lambda})-\alpha_2(\bar{\lambda})}\left(\frac{1-b_1^*}{b_1^*}\right)^{\alpha_2(\bar{\lambda})-\alpha_1(\bar{\lambda})}-1}+\frac{c-(\bar{\lambda}-\ubar{\lambda})^2}{r}\left(\frac{1-b_1^*}{b_1^*}\right)^{-\alpha_2(\bar{\lambda})}\right)\\
&\enskip= (A1)+(B1).
\end{align*}
For $(A1)$ we obtain using $\alpha_1(\bar{\lambda})-\alpha_2(\bar{\lambda})>0$, that
\begin{align*}
(A1)=\alpha_2(\bar{\lambda})\frac{\frac{(\bar{\lambda}-\ubar{\lambda})^2}{r}-\frac{\bar{\lambda}-\ubar{\lambda}}{\alpha_1(\ubar{\lambda})\ubar{\lambda}}}
{\left(\frac{1-b^*_2}{b^*_2}\right)^{\alpha_1(\bar{\lambda})-\alpha_2(\bar{\lambda})}\left(\frac{1-b_1^*}{b_1^*}\right)^{\alpha_2(\bar{\lambda})-\alpha_1(\bar{\lambda})}-1}\rightarrow -\alpha_2(\bar{\lambda})\left(\frac{(\bar{\lambda}-\ubar{\lambda})^2}{r}-\frac{\bar{\lambda}-\ubar{\lambda}}{\alpha_1(\ubar{\lambda})\ubar{\lambda}}\right),
\end{align*}
as $b^*_2 \nearrow 1$. For $(B1)$ we have
\begin{align*}
(B1)&=\alpha_2(\bar{\lambda})\frac{c-(\bar{\lambda}-\ubar{\lambda})^2}{r}\left(\frac{1-b_1^*}{b_1^*}\right)^{-\alpha_2(\bar{\lambda})}\left(
\frac{\left(\frac{1-b^*_2}{b^*_2}\right)^{\alpha_1(\bar{\lambda})}\left(\frac{1-b_1^*}{b_1^*}\right)^{\alpha_2(\bar{\lambda})-\alpha_1(\bar{\lambda})}}
{\left(\frac{1-b^*_2}{b^*_2}\right)^{\alpha_1(\bar{\lambda})-\alpha_2(\bar{\lambda})}\left(\frac{1-b_1^*}{b_1^*}\right)^{\alpha_2(\bar{\lambda})-\alpha_1(\bar{\lambda})}-1}\right)\\
&\rightarrow 0,
\end{align*}
as $b^*_2 \nearrow 1$. Adding the limits gives us \eqref{lim_p_to_1}, and using \eqref{cond_lemma_employee} we thus obtain the first part of \eqref{lemma_ex_p_eq}.

For the second limit we find
\begin{align}\label{eq_limit_b}
\begin{split}
& \lim_{b^*_2\to b_1^*}b^*_2(1-b^*_2)v_p(b^*_2-,b_1^*,b_2^*)\\
& =\alpha_2(\bar{\lambda})\frac{c-(\bar{\lambda}-\ubar{\lambda})^2}{r}+\left(\frac{1-b_1^*}{b_1^*}\right)^{\alpha_1(\bar{\lambda})}(\alpha_1(\bar{\lambda})-\alpha_2(\bar{\lambda})) \lim_{b^*_2\to b_1^*}-k_1.
\end{split}
\end{align}
We note that $\alpha_1(\bar{\lambda})-\alpha_2(\bar{\lambda})>0$ and further investigate the limit by considering the denominator and numerator of $k_1$ separately. For the denominator of $k_1$ we have that
\begin{align*}
&\left(\frac{1-b^*_2}{b^*_2}\right)^{\alpha_1(\bar{\lambda})}-\left(\frac{1-b_1^*}{b_1^*}\right)^{\alpha_1(\bar{\lambda})-\alpha_2(\bar{\lambda})}\left(\frac{1-b^*_2}{b^*_2}\right)^{\alpha_2(\bar{\lambda})}\\
&\enskip=\left(\frac{1-b^*_2}{b^*_2}\right)^{\alpha_1(\bar{\lambda})}\left(1-\left(\frac{1-b_1^*}{b_1^*}\right)^{\alpha_1(\bar{\lambda})-\alpha_2(\bar{\lambda})}\left(\frac{1-b^*_2}{b^*_2}\right)^{\alpha_2(\bar{\lambda})-\alpha_1(\bar{\lambda})}\right)\nearrow 0,
\end{align*}
as $b^*_2\searrow b_1^*$; to see this use e.g., that $ x \mapsto \frac{1-x}{x}$ is decreasing for $x>0$. For the numerator of  $k_1$ we use \eqref{cond_lemma_employee} to find
\begin{align*}
&\frac{(\bar{\lambda}-\ubar{\lambda})^2}{r}-\frac{\bar{\lambda}-\ubar{\lambda}}{\alpha_1(\ubar{\lambda})\ubar{\lambda}}+\frac{c-(\bar{\lambda}-\ubar{\lambda})^2}{r}\left(\frac{1-b_1^*}{b_1^*}\right)^{-\alpha_2(\bar{\lambda})}\left(\frac{1-b^*_2}{b^*_2}\right)^{\alpha_2(\bar{\lambda})}\\
&\enskip\rightarrow \frac{c}{r}-\frac{\bar{\lambda}-\ubar{\lambda}}{\alpha_1(\ubar{\lambda})\ubar{\lambda}}>0.
\end{align*}
It follows that $\lim_{b^*_2\searrow b_1^*}k_1=-\infty$ and by \eqref{eq_limit_b} we obtain \eqref{lim_p_b} (from which the second part of \eqref{lemma_ex_p_eq} follows). Hence, statement (i) has been proved. 

Relying on the continuity of $v_p(b_2^*-,b_1^*,b_2^*)$ for $b_2^*\in (b_1^*,1)$ it follows immediately from \eqref{lemma_ex_p_eq} and the intermediate value theorem that we can choose $b^*_2$ so that 
$$v_p(b^*_2-,b_1^*,b_2^*)=\frac{\bar{\lambda}-\ubar{\lambda}}{b^*_2(1-b^*_2)\ubar{\lambda}}.$$
Hence, if we choose $b_2^*$ in this way then $p \mapsto v(p,b_1^*,b_2^*)$ satisfies \eqref{ODE_Employee} as well as \eqref{mainthmcond4} and hence statement (ii) holds.  
\end{proof} 

We need the following technical result  in the proof of  Lemma \ref{lim_b_lemma}. 
 
\begin{lemma}\label{c_1 lemma}
Let $c_1$ be given by \eqref{c_1_eq}, then we have
\begin{enumerate}[(a)]
\item $$
c_1 >-\left(\frac{1-b_1^*}{b_1^*}\right)^{-\alpha_1(\bar{\lambda})}\frac{b_1^*\bar{\lambda}-c}{b_1^*r},
$$
for $b_1^*\leq c/\bar{\lambda}$,
\item
$$
c_1 < \left(\frac{\alpha_1(\ubar{\lambda})(\bar{\lambda}-\ubar{\lambda})}{r(\alpha_1(\bar{\lambda})-\alpha_2(\bar{\lambda}))}-\frac{b_1^*\bar{\lambda}-c}{b_1^*r}\right)\left(\frac{1-b_1^*}{b_1^*}\right)^{-\alpha_1(\bar{\lambda})},
$$

for $b_1^* \geq c/\bar{\lambda}$.
\end{enumerate}
\end{lemma}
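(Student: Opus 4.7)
The plan is to clear the positive denominator in \eqref{c_1_eq} and reduce each inequality to a polynomial sign check. Throughout I abbreviate $\alpha_1 := \alpha_1(\bar{\lambda})$, $\alpha_2 := \alpha_2(\bar{\lambda})$, $\alpha_1' := \alpha_1(\ubar{\lambda})$, $\eta := (1-b_1^*)/b_1^*$, $\xi := (1-b_2^*)/b_2^*$, $A := \alpha_1'(\bar{\lambda}-\ubar{\lambda})/r > 0$ and $B := (b_1^*\bar{\lambda} - c)/(b_1^* r)$. From \eqref{eq_alfa} the map $\lambda \mapsto \alpha_1(\lambda)$ is decreasing and $\alpha_2 < 0 < \alpha_1 < \alpha_1'$, while $b_1^* < b_2^*$ forces $\eta > \xi > 0$. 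Rewriting the denominator of \eqref{c_1_eq} as
\begin{align*}
D = (\alpha_1'-\alpha_2)\eta^{\alpha_1-\alpha_2} - (\alpha_1'-\alpha_1)\xi^{\alpha_1-\alpha_2}
\end{align*}
and using $\alpha_1'-\alpha_2 > \alpha_1'-\alpha_1 > 0$ combined with $\eta^{\alpha_1-\alpha_2} > \xi^{\alpha_1-\alpha_2}$ gives $D > 0$, so multiplying each inequality through by $D$ is sign-preserving.

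For part (a), expanding $D\,(c_1 + \eta^{-\alpha_1}B)$ causes two $B\eta^{-\alpha_2}$ contributions to cancel, leaving
\begin{align*}
D\,(c_1 + \eta^{-\alpha_1}B) = A\xi^{-\alpha_2} + B(\alpha_1-\alpha_1')\xi^{\alpha_1-\alpha_2}\eta^{-\alpha_1}.
\end{align*}
The hypothesis $b_1^* \leq c/\bar{\lambda}$ gives $B \leq 0$, and since $\alpha_1-\alpha_1' < 0$ the second summand is $\geq 0$; the first is strictly positive, so (a) follows.

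For part (b), let $M$ denote the proposed upper bound, so $M = \eta^{-\alpha_1}\bigl(A/(\alpha_1-\alpha_2) - B\bigr)$. An analogous expansion yields
\begin{align*}
D\,(M - c_1) = -A\,E - B(\alpha_1-\alpha_1')\xi^{\alpha_1-\alpha_2}\eta^{-\alpha_1},
\end{align*}
where
\begin{align*}
E := \xi^{-\alpha_2} + \frac{\alpha_2-\alpha_1'}{\alpha_1-\alpha_2}\eta^{-\alpha_2} - \frac{\alpha_1-\alpha_1'}{\alpha_1-\alpha_2}\xi^{\alpha_1-\alpha_2}\eta^{-\alpha_1}.
\end{align*}
Under (b) one has $B \geq 0$ and $\alpha_1-\alpha_1' < 0$, so the $B$-term is $\geq 0$, and the proof reduces to showing $E < 0$.

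Showing $E < 0$ whenever $\xi < \eta$ is the main obstacle, since $E$ has mixed-sign coefficients and no obvious factorization. My plan is a boundary-plus-monotonicity argument: at $\xi = \eta$ the coefficients telescope via $\tfrac{\alpha_2-\alpha_1' - (\alpha_1-\alpha_1')}{\alpha_1-\alpha_2} = -1$, giving $E(\eta,\eta) = 0$; differentiation in $\xi$ yields
\begin{align*}
\partial_\xi E = \xi^{-\alpha_2-1}\bigl(-\alpha_2 - (\alpha_1-\alpha_1')(\xi/\eta)^{\alpha_1}\bigr),
\end{align*}
which is strictly positive because $-\alpha_2 > 0$ and $-(\alpha_1-\alpha_1') > 0$. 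Hence $\xi \mapsto E(\xi,\eta)$ is strictly increasing on $(0,\eta]$, so $\xi < \eta$ gives $E(\xi,\eta) < E(\eta,\eta) = 0$, completing part (b).
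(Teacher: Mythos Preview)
Your argument is correct. Both parts go through: the cancellation in (a) is exact, and in (b) the expansion is right; $E(\eta,\eta)=0$ together with $\partial_\xi E>0$ on $(0,\eta]$ does force $E(\xi,\eta)<0$ for $\xi<\eta$, which is all you need since the $B$-term is nonnegative and $-AE$ is strictly positive.

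The paper proceeds differently. Rather than clearing the denominator, it works directly with the rational expression $c_1=N/D$ as a function of $b_2^*$. For (a) it observes that, under $B\le 0$, the numerator $N$ is positive and decreasing in $b_2^*$ while $D$ is positive and increasing, so $c_1$ is strictly decreasing in $b_2^*$ and the bound is its limiting value as $b_2^*\nearrow 1$. For (b) it splits $c_1=A\xi^{-\alpha_2}/D+(\alpha_2-\alpha_1')B\eta^{-\alpha_2}/D$ and bounds each summand by the same monotonicity-in-$b_2^*$ reasoning, evaluating the first at $b_2^*=b_1^*$ and the second at $b_2^*\to 1$. Your approach trades this ratio-monotonicity for an algebraic simplification after multiplying through by $D$; for (a) this is genuinely shorter, since the sign falls out with no limiting argument at all. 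For (b) you still end up needing a monotonicity argument, but on the polynomial-type quantity $E$ rather than on a quotient, which is slightly cleaner to differentiate. Both routes ultimately exploit the same ordering $\xi<\eta$ and the same sign facts $\alpha_2<0<\alpha_1<\alpha_1'$.
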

\begin{proof}
Let us prove (a) by showing that showing that $c_1$ is strictly decreasing in $b^*_2$ (recall that $b^*_2\in (b_1^*,1)$); the result then follows by taking $b^*_2=1$ in $c_1$.  
It holds that the denominator of $c_1$ is positive and strictly increasing in $b^*_2$. To see this use e.g., that $\alpha_1(\bar{\lambda})-\alpha_1(\ubar{\lambda})<0$ and that $\frac{1-x}{x}$ is strictly decreasing for $x>0$, which implies that
\begin{align*}
&(\alpha_1(\bar{\lambda})-\alpha_1(\ubar{\lambda}))\left(\frac{1-b^*_2}{b^*_2}\right)^{\alpha_1(\bar{\lambda})-\alpha_2(\bar{\lambda})}-(\alpha_2(\bar{\lambda})-\alpha_1(\ubar{\lambda}))\left(\frac{1-b_1^*}{b_1^*}\right)^{\alpha_1(\bar{\lambda})-\alpha_2(\bar{\lambda})}\\&\enskip\geq \left(\frac{1-b_1^*}{b_1^*}\right)^{\alpha_1(\bar{\lambda})-\alpha_2(\bar{\lambda})}(\alpha_1(\bar{\lambda})-\alpha_2(\bar{\lambda}))>0.
\end{align*}
Additionally, $-\alpha_2(\bar{\lambda})>0$ and $b_1^*\leq c/\bar{\lambda}$ implies that the numerator is positive and decreasing in $b^*_2$. We conclude that $c_1$ is strictly decreasing on $b_2^*$.

In order to prove (b) we write 
\begin{align*}
c_1=\frac{\frac{\alpha_1(\ubar{\lambda})(\bar{\lambda}-\ubar{\lambda})}{r}\left(\frac{1-b_2^*}{b_2^*}\right)^{-\alpha_2(\bar{\lambda})}}{D(b_1^*,b_2^*)}+\frac{(\alpha_2(\bar{\lambda})-\alpha_1(\ubar{\lambda}))\frac{b_1^*\bar{\lambda}-c}{b_1^*r}\left(\frac{1-b_1^*}{b_1^*}\right)^{-\alpha_2(\bar{\lambda})}}{D(b_1^*,b_2^*)}
\end{align*}
where $D(b_1^*,b_2^*)$ denotes the denominator of $c_1$. Note that $b_1^* \geq c/\bar{\lambda}$ implies that the second expression is non-positive. Thus, by similar arguments to (a) the result follows by taking $b_2^*=b_1^*$ in the first and $b_2^*=1$ in the second expression.
\end{proof}

\begin{proof} (of Lemma \ref{lim_b_lemma}.) We find with some work (use e.g.,\eqref{asfqwfvwfqwg}) that 
\begin{align*}
u_p(b_1^*+,b_1^*,b_2^*)&=\frac{c}{b_1^*r}-\frac{1}{1-b_1^*}\left(\alpha_1(\bar{\lambda})c_1\left(\frac{1-b_1^*}{b_1^*}\right)^{\alpha_1(\bar{\lambda})}+c_2\alpha_2(\bar{\lambda})\left(\frac{1-b_1^*}{b_1^*}\right)^{\alpha_2(\bar{\lambda})}\right)\\
&=\frac{c}{b_1^*r}+\frac{\alpha_2(\bar{\lambda})(b_1^*\bar{\lambda}-c)}{(1-b_1^*)b_1^*r}-\frac{\alpha_1(\bar{\lambda})-\alpha_2(\bar{\lambda})}{1-b_1^*}c_1\left(\frac{1-b_1^*}{b_1^*}\right)^{\alpha_1(\bar{\lambda})}.
\end{align*}
Suppose $b_1^*\leq c/\bar{\lambda}$. Thus, Lemma \ref{c_1 lemma}(a) implies
\begin{align*}
u_p(b_1^*+,b_1^*,b_2^*)&<\frac{c}{b_1^*r}+\frac{\alpha_2(\bar{\lambda})(b_1^*\bar{\lambda}-c)}{(1-b_1^*)b_1^*r}+\frac{(\alpha_1(\bar{\lambda})-\alpha_2(\bar{\lambda}))(b_1^*\bar{\lambda}-c)}{(1-b_1^*)b_1^*r}\\
&=\frac{(1-b_1^*)c-\alpha_1(\bar{\lambda})c}{(1-b_1^*)b_1^*r}+\frac{\alpha_1(\bar{\lambda})\bar{\lambda}}{(1-b_1^*)r}\rightarrow -\infty,
\end{align*}
as $b_1^*\searrow 0$, since $\alpha_1(\bar{\lambda})>1$, and the first result follows.

Now suppose $b_1^* \geq c/\bar{\lambda}$. Then Lemma \ref{c_1 lemma}(b) implies
\begin{align}
u_p(b_1^*+,b_1^*,b_2^*)&>\frac{c}{b_1^*r}+\frac{\alpha_2(\bar{\lambda})(b_1^*\bar{\lambda}-c)}{b_1^*(1-b_1^*)r}-\frac{\alpha_1(\ubar{\lambda})(\bar{\lambda}-\ubar{\lambda})}{(1-b_1^*)r}+\frac{(\alpha_1(\bar{\lambda})-\alpha_2(\bar{\lambda}))(b_1^*\bar{\lambda}-c)}{(1-b_1^*)b_1^*r}\nonumber\\
&=\frac{c}{b_1^*r}+\frac{\left(\alpha_1(\bar{\lambda})\left(\bar{\lambda}-\frac{c}{b_1^*}\right)-\alpha_1(\ubar{\lambda})(\bar{\lambda}-\ubar{\lambda})\right)}{(1-b_1^*)r}.
\label{lower_bound_up}
\end{align}
Fix a sufficiently small $\epsilon>0$. Then, $\alpha_1(\bar{\lambda})>1$ implies that there exists a $\bar{b}\in (0,1)$ such that $\alpha_1(\bar{\lambda})\left(\bar{\lambda}-\frac{c}{b_1^*}\right)>\bar{\lambda}-c+\epsilon$ for any $b_1^*\in(\bar{b},1)$. Thus, \eqref{existence_thm_cond} implies that for $b_1^*\in(\bar{b},1)$, we have that
\begin{align*}
u_p(b_1^*+,b_1^*,b_2^*)&>\frac{c}{b_1^*r}+\frac{\left(\bar{\lambda}-c+\epsilon-\alpha_1(\ubar{\lambda})(\bar{\lambda}-\ubar{\lambda})\right)}{(1-b_1^*)r}\\
&\geq \frac{c}{b_1^*r}+\frac{\epsilon}{(1-b_1^*)r}\to\infty,
\end{align*}
for $b_1^*\nearrow 1$. Hence, the second result follows.
\end{proof}

\section{Results for the proof of Theorem \ref{thm_eq_cand_ex}}\label{app:ODE-exist} 
Throughout this section we consider the setting of the proof of Theorem \ref{thm_eq_cand_ex}. 
Particularly, we here consider a pair $(b_1^*,b_2^*)$ such that \eqref{mainthmcond2} and \eqref{mainthmcond4} hold. 
We also rely on condition \eqref{existence_thm_cond}.

\begin{lemma}\label{v<c/r}
It holds that $v(p)<c/r$ for $p\in (b^*_1,1)$.
\end{lemma}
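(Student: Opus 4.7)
Plan. I would introduce the auxiliary function $w(p)=v(p)-c/r$ and prove $w(p)<0$ on $(b_1^*,1)$ by handling the two regions separately, using the explicit form of $v$ from \eqref{general-sol-v} together with a maximum-principle argument on the ODE \eqref{ODE_Employee}.

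First I would treat the region $(b_2^*,1)$ by direct computation. Using the constants determined in Section \ref{sec:obs-cont-thres}, namely $k_4=0$ and
\[
k_3=-\frac{\bar{\lambda}-\ubar{\lambda}}{\alpha_1(\ubar{\lambda})\ubar{\lambda}}\Bigl(\tfrac{1-b_2^*}{b_2^*}\Bigr)^{-\alpha_1(\ubar{\lambda})}<0,
\]
the solution on $(b_2^*,1)$ from \eqref{general-sol-v} gives
\[
w(p)=v(p)-\frac{c}{r}=k_3\Bigl(\tfrac{1-p}{p}\Bigr)^{\alpha_1(\ubar{\lambda})}<0,\qquad p\in(b_2^*,1),
\]
since $\alpha_1(\ubar{\lambda})>0$ and $(1-p)/p>0$. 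In particular $w(b_2^*)=-\tfrac{\bar{\lambda}-\ubar{\lambda}}{\alpha_1(\ubar{\lambda})\ubar{\lambda}}<0$, which we pick up via continuity of $v$ at $b_2^*$.

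Next I would handle $(b_1^*,b_2^*)$ by a strict maximum principle argument. Subtracting $c/r$ from the first ODE in \eqref{ODE_Employee} shows that $w$ satisfies
\[
\frac{\bar{\lambda}^2p^2(1-p)^2}{2}w_{pp}(p)+\bar{\lambda}^2p(1-p)^2w_p(p)-rw(p)=(\bar{\lambda}-\ubar{\lambda})^2>0,\qquad p\in(b_1^*,b_2^*).
\]
Suppose, toward a contradiction, that $w(p_0)\geq 0$ for some $p_0\in(b_1^*,b_2^*)$. Since the boundary values $w(b_1^*)=-c/r<0$ and $w(b_2^*)<0$ are both strictly negative, the maximum of $w$ on $[b_1^*,b_2^*]$ must be attained at some interior point $p^*\in(b_1^*,b_2^*)$ with $w(p^*)\geq 0$. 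At such a point $w_p(p^*)=0$ and $w_{pp}(p^*)\leq 0$, so the left-hand side of the displayed equation at $p^*$ is $\leq -rw(p^*)\leq 0$, contradicting the strict positivity of $(\bar{\lambda}-\ubar{\lambda})^2$. Hence $w<0$ throughout $(b_1^*,b_2^*)$, and combined with the first step we obtain $v(p)<c/r$ on $(b_1^*,1)$.

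I do not expect any serious obstacle: the right-region bound is a one-line consequence of the sign of $k_3$, and the left-region bound is a textbook subsolution/maximum-principle argument, made strict by the fact that the inhomogeneous term $(\bar{\lambda}-\ubar{\lambda})^2$ is strictly positive (so that even the borderline case $w(p^*)=0$ is ruled out). The only small point to keep straight is continuity of $v$ at $b_2^*$, which is built into the regularity requirement $v\in\mathcal{C}(0,1)$ in \eqref{ODE_Employee} and so transfers the negative value $w(b_2^*)$ to the boundary of the first region.
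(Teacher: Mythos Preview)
Your proof is correct. Both you and the paper use a maximum-principle style contradiction, but the decomposition differs. The paper argues directly on the whole interval $(b_1^*,1)$ in two passes: first it shows $v\le c/r$ by locating a local maximum strictly above $c/r$ and deriving $v_{pp}>0$ there from the ODE; then it upgrades to strict inequality by showing that any touching point $v(\tilde p)=c/r$ forces either $v_{pp}(\tilde p-)>0$ or, after propagating $v\equiv c/r$ down to $b_2^*$, $v_{pp}(b_2^*-)>0$. This argument never opens the explicit formula and so handles the junction $b_2^*$ purely through the $\mathcal{C}^1$ regularity in \eqref{ODE_Employee}.

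Your route is more explicit: on $(b_2^*,1)$ you read off $w<0$ directly from the sign of $k_3$ (legitimate here, since in the setting of Appendix~\ref{app:ODE-exist} condition \eqref{mainthmcond4} is in force and the constants from Section~\ref{sec:obs-cont-thres} apply), and then on $(b_1^*,b_2^*)$ you run a clean strict-maximum-principle argument with two negative boundary values and a strictly positive forcing term. The advantage of your approach is that the right-hand region is disposed of in one line and the junction at $b_2^*$ never needs special handling; the advantage of the paper's approach is that it does not rely on the explicit constants and would transfer more readily if the solution formula were unavailable.
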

\begin{proof}
Let us first prove $v(p)\leq c/r$ by contradiction. Assume there exists a $\tilde{p}\in (b^*_1,1)$ such that $v(\tilde{p})>c/r$. Then $v\in \mathcal{C}^{1}(b_1^*,1)$ and $v(1)=c/r$ implies that there exists $\hat{p}\in (\tilde{p},1)$ with $v(\hat{p})>c/r$ such that $v$ attains a local maximum at $\hat{p}$. 
Using the ODE in \eqref{ODE_Employee} we find $v_{pp}(\hat{p}-)\geq v_{pp}(\hat{p}+)>0$, which is a contradiction.

We continue to prove $v(p)< c/r$ by contradiction. For this purpose,
assume that $v(\tilde{p})= c/r$ for some $\tilde{p}\in(b^*_1,1)$. Then by $v\leq c/r$ we have that $v_p(\tilde{p})=0$. We have two cases:
\begin{itemize}
\item If $\tilde{p}\in(b_1^*,b_2^*]$ then the ODE \eqref{ODE_Employee} implies that $v_{pp}(\tilde{p}-)>0$, which contradicts $v\leq c/r$.
\item If $\tilde{p}\in (b_2^*,1)$, using the ODE we find $v(p)=c/r$ for $p\in [b^*_2,\tilde{p}]$ (cf. also \eqref{general-sol-v}). 
Since $v\in \mathcal{C}^1(b_1^*,1)$, we obtain $v_{pp}(b^*_2-)>0$ as above, which again contradicts $v\leq c/r$.
\end{itemize}
\end{proof}

\begin{lemma}\label{v>0}
It holds that 
(a) $v_p(b^*_1+)>0$, and 
(b) $v_p(b^*_2)>0$.
\end{lemma}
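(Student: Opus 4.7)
\textbf{Plan for Lemma \ref{v>0}.}

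Part (b) is immediate from hypothesis \eqref{mainthmcond4}: since $b_2^*\in(0,1)$ and $\bar{\lambda}>\ubar{\lambda}>0$, the identity $b_2^*(1-b_2^*)v_p(b_2^*)=(\bar{\lambda}-\ubar{\lambda})/\ubar{\lambda}$ gives $v_p(b_2^*)=(\bar{\lambda}-\ubar{\lambda})/[b_2^*(1-b_2^*)\ubar{\lambda}]>0$ at once. The work is entirely in (a).

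For (a), my plan is a two-step argument: first establish the auxiliary positivity $v\geq 0$ on $[b_1^*,b_2^*]$ via a minimum principle on the ODE, and then rule out $v_p(b_1^*+)=0$. Writing $\mathcal{L}v:=\tfrac{\bar{\lambda}^2p^2(1-p)^2}{2}v_{pp}+\bar{\lambda}^2p(1-p)^2v_p-rv$, the first line of \eqref{ODE_Employee} gives $\mathcal{L}v=(\bar{\lambda}-\ubar{\lambda})^2-c\leq 0$ by \eqref{existence_thm_cond}. From \eqref{v_jump} together with \eqref{cond_lemma_employee} and $\alpha_1(\ubar{\lambda})>1$ (so $(\bar{\lambda}-\ubar{\lambda})/[\alpha_1(\ubar{\lambda})\ubar{\lambda}]<(\bar{\lambda}-\ubar{\lambda})/\ubar{\lambda}<c/r$) one obtains $v(b_2^*)=c/r-(\bar{\lambda}-\ubar{\lambda})/[\alpha_1(\ubar{\lambda})\ubar{\lambda}]>0$, while $v(b_1^*)=0$. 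If $v$ attained a negative minimum at some interior $p^*\in(b_1^*,b_2^*)$, then $v_p(p^*)=0$ and $v_{pp}(p^*)\geq 0$ would force $\mathcal{L}v(p^*)\geq -rv(p^*)>0$, contradicting $\mathcal{L}v\leq 0$. Hence $v\geq 0$ on $[b_1^*,b_2^*]$, and consequently $v_p(b_1^*+)\geq 0$.

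The main step is then to exclude $v_p(b_1^*+)=0$. Supposing it, evaluation of the ODE at $p=b_1^*$ with $v(b_1^*)=v_p(b_1^*+)=0$ gives $v_{pp}(b_1^*+)=2[(\bar{\lambda}-\ubar{\lambda})^2-c]/[\bar{\lambda}^2(b_1^*)^2(1-b_1^*)^2]\leq 0$. When $(\bar{\lambda}-\ubar{\lambda})^2<c$ this is strict and a Taylor expansion at $b_1^*$ yields $v(p)<0$ for $p$ slightly above $b_1^*$, contradicting $v\geq 0$. The boundary case $(\bar{\lambda}-\ubar{\lambda})^2=c$ allowed by \eqref{existence_thm_cond} is the subtler one and the main obstacle: there the constant term in \eqref{general-sol-v} vanishes and the ODE is homogeneous, so the conditions $v(b_1^*)=0=v_p(b_1^*+)$ translate into a $2\times 2$ homogeneous linear system for $(k_1,k_2)$ whose matrix has determinant proportional to $(\alpha_2(\bar{\lambda})-\alpha_1(\bar{\lambda}))(\tfrac{1-b_1^*}{b_1^*})^{\alpha_1(\bar{\lambda})+\alpha_2(\bar{\lambda})}\neq 0$; hence $k_1=k_2=0$ and $v\equiv 0$ on $(b_1^*,b_2^*)$, contradicting $v(b_2^*)>0$ shown above. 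Either way $v_p(b_1^*+)>0$, finishing (a).
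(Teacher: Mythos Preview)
Your proof is correct. For (b) you take a shortcut the paper does not: you read off $v_p(b_2^*)>0$ directly from hypothesis \eqref{mainthmcond4}, whereas the paper says (b) follows by the same minimum-principle argument as (a). Your route is cleaner here.

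For (a) the two proofs share the same core idea --- a minimum principle for the ODE together with a separate treatment of the degenerate case $(\bar\lambda-\ubar\lambda)^2=c$ --- but are organised differently. The paper assumes $v_p(b_1^*+)\leq 0$ and produces a negative local minimum of $v$ somewhere in the full interval $(b_1^*,1)$, using only the boundary value $v(1)=c/r$; in the degenerate case it pushes past $b_2^*$ and finds the minimum on $(b_2^*,1)$. You instead work on the subinterval $[b_1^*,b_2^*]$, first proving $v\geq 0$ there (minimum principle with boundary values $v(b_1^*)=0$ and $v(b_2^*)>0$), then excluding $v_p(b_1^*+)=0$. The price you pay is that you need the explicit value $v(b_2^*)=c/r-(\bar\lambda-\ubar\lambda)/[\alpha_1(\ubar\lambda)\ubar\lambda]$ from \eqref{v_jump} and the right inequality in \eqref{cond_lemma_employee} to secure $v(b_2^*)>0$; the paper's version is more self-contained in that it uses only $v(1)=c/r$ and \eqref{existence_thm_cond}. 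Both are valid; yours trades a bit more external input for a shorter argument confined to $(b_1^*,b_2^*)$.
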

\begin{proof}
We will only prove the first statement, since the second statement follows using analogues arguments. Suppose that $v_p(b^*_1+)\leq 0$. We will show that this implies that $v$ has a local minimum below zero, i.e., there exists a point $\tilde{p} \in (b^*_1,1)$ such that $v_p(\tilde{p})=0, v_{pp}(\tilde{p}+)\geq 0$ and $v(\tilde{p})< 0$. 
This contradicts the ODE in \eqref{ODE_Employee} since $c-(\lambda^*(\tilde{p}+)-\ubar{\lambda})^2\geq 0$ (by \eqref{existence_thm_cond}). 
We have three cases:
\begin{itemize}
\item If $v_p(b^*_1+)<0$, then $v(1)=\frac{c}{r}$ and continuity immediately imply that $v$ has a local minimum below zero.
\item If $v_p(b^*_1+)=0,c-(\bar{\lambda}-\ubar{\lambda})^2>0$, then the ODE in \eqref{ODE_Employee} implies that $v_{pp}(b^*_1+)<0$. Analogously to the first case this implies that $v$ has a local minimum below zero. 
\item If $v_p(b^*_1+)=0,c-(\bar{\lambda}-\ubar{\lambda})^2=0$, then we find using the ODE that
$v(p)=0$ for $p\in [b^*_1,b^*_2)$. Using also $v\in \mathcal{C}^1(b^*_1,1)$ and the ODE we conclude that $v(b^*_2)=v_p(b^*_2)=0$ and $v_{pp}(b^*_2+)<0$. With $v(1)=\frac{c}{r}$ this implies that $v$ has a local minimum below zero.
\end{itemize}
\end{proof}

\begin{lemma}\label{inc_v}
It holds that $v_p(p)>0$ for $p\in (b^*_1,1)$.
\end{lemma}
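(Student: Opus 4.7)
The plan is to argue by contradiction. Suppose $v_p$ is not strictly positive on $(b_1^*,1)$. Since $v\in\mathcal{C}^1(b_1^*,1)$, the function $v_p$ is continuous on $(b_1^*,1)$, and combined with $v_p(b_1^*+)>0$ from Lemma \ref{v>0}(a), one may set $\tilde p$ to be the smallest point in $(b_1^*,1)$ at which $v_p$ vanishes. Then $v_p>0$ on $(b_1^*,\tilde p)$, which gives $v(\tilde p)>v(b_1^*)=0$; moreover, computing the left derivative directly shows $v_{pp}(\tilde p-)\leq 0$, and since Lemma \ref{v>0}(b) gives $\tilde p\neq b_2^*$, the ODE \eqref{ODE_Employee} implies that $v_{pp}$ is continuous at $\tilde p$ and thus $v_{pp}(\tilde p)\leq 0$.

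Consider first the case $\tilde p\in (b_1^*,b_2^*)$. Using $v_p(\tilde p)=0$ in the first ODE of \eqref{ODE_Employee} gives $rv(\tilde p)\leq c-(\bar\lambda-\ubar\lambda)^2$. Let $\tilde q\in [\tilde p,b_2^*]$ be a point at which $v$ attains its minimum. Since $v_p(b_2^*)>0$, the minimum is not at $b_2^*$. If $\tilde q=\tilde p$, then $v\geq v(\tilde p)$ on a right neighborhood of $\tilde p$, which combined with $v_p(\tilde p)=0$ and $v_{pp}(\tilde p)\leq 0$ forces $v_{pp}(\tilde p)=0$; the ODE then gives $v(\tilde p)=(c-(\bar\lambda-\ubar\lambda)^2)/r$, and uniqueness of solutions to the linear second-order ODE (with initial data $(v(\tilde p),0)$ at $\tilde p$, matching the constant solution) propagates this value across the full sub-interval $(b_1^*,b_2^*)$, contradicting $v(b_1^*)=0$ and Lemma \ref{v>0}(a). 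Otherwise $\tilde q\in (\tilde p,b_2^*)$ is an interior minimum, so $v_p(\tilde q)=0$ and $v_{pp}(\tilde q)\geq 0$, and the ODE yields $rv(\tilde q)\geq c-(\bar\lambda-\ubar\lambda)^2\geq rv(\tilde p)\geq rv(\tilde q)$; the resulting chain of equalities forces $v_{pp}(\tilde p)=0$ and the same uniqueness argument applies.

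Consider next the case $\tilde p\in (b_2^*,1)$. The second ODE in \eqref{ODE_Employee} together with $v_p(\tilde p)=0$ gives $rv(\tilde p)\leq c$. A minimum $\tilde q$ of $v$ on $[\tilde p,1]$ cannot be attained at $p=1$ because $v(\tilde p)<c/r=v(1)$ by Lemma \ref{v<c/r}. The case $\tilde q=\tilde p$ forces, as in the previous paragraph, $v_{pp}(\tilde p)=0$ and hence $v(\tilde p)=c/r$ by the ODE, contradicting Lemma \ref{v<c/r}; while the case $\tilde q\in (\tilde p,1)$ gives $v_p(\tilde q)=0$ and $v_{pp}(\tilde q)\geq 0$, so the ODE forces $rv(\tilde q)\geq c$, again contradicting Lemma \ref{v<c/r}.

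The step I expect to require the most care is the degenerate case $v_{pp}(\tilde p)=0$ within Case 1, where one must invoke uniqueness of the linear second-order ODE to propagate a constant solution across the full sub-interval $(b_1^*,b_2^*)$ and then use the boundary value $v(b_1^*)=0$ together with $v_p(b_1^*+)>0$ from Lemma \ref{v>0}(a) to close the argument.
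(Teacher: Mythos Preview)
Your proof is correct and follows essentially the same approach as the paper: argue by contradiction, locate the first zero $\tilde p$ of $v_p$, and use the ODE \eqref{ODE_Employee} at critical points together with the boundary information from Lemmas \ref{v>0} and \ref{v<c/r} to derive a contradiction. Your organization is slightly cleaner than the paper's---you observe $v_{pp}(\tilde p)\leq 0$ up front (which immediately disposes of the paper's separate ``$v_{pp}(p_1)>0$'' case) and then argue via the minimum of $v$ on the remaining sub-interval, and you also write out the $(b_2^*,1)$ case explicitly using Lemma \ref{v<c/r} rather than leaving it as ``analogous''---but the substantive ideas are the same.
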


\begin{proof}
We show that $v_p(p)>0$ for $p\in (b^*_1,b^*_2)$ by contradiction. The remaining case can be proved using analogues methods. 
To this end, assume that $p_1\in (b^*_1,b^*_2)$ is the smallest point such that $v_p(p_1)=0$. We consider three cases:
\begin{itemize}

\item If $v_{pp}(p_1)=0$, then the ODE in \eqref{ODE_Employee} implies that $v$ is 
 constant on $(p_1,b^*_2)$, which is a contradiction to $v\in \mathcal{C}^1(b^*_1,1)$ and Lemma \ref{v>0}(b).

\item If $v_{pp}(p_1)>0$, then $p_1$ is a local minimum and Lemma \ref{v>0}(a) implies that $p_1$ cannot be the first point with $v_p=0$.

\item Consider the case $v_{pp}(p_1)<0$. Then since $v_p(b^*_2)>0$ and $v\in\mathcal{C}^2(b^*_1,b^*_2)$, we see that there must exist a second $p_2\in(p_1,b^*_2)$ such that $v_p(p_2)=0$ and $v(p_2)$ is a local minimum. Let $p_2$ be the first such a point. Then it is easy to see, that $v_{pp}(p_2)\geq 0$ and $v(p_2)<v(p_1)$. However, using $v_p(p_1)=v_p(p_2)=0$, $v_{pp}(p_1)<0 \leq v_{pp}(p_2)$ and the ODE we find the contradiction
\begin{align*}
v(p_1)<\frac{c-(\bar{\lambda}-\ubar{\lambda})^2}{r}\leq v(p_2).
\end{align*}

\end{itemize}

\end{proof}

\begin{lemma}\label{dec_lem}
Let $f(p):=p(1-p)v_p(p)$. Then
\begin{enumerate}[(a)]
\item $f'(p_1)<0$ for any $p_1\in (b^*_1,b^*_2)\cup (b^*_2,1)$ satisfying $rv(p_1)\leq c-(\lambda^*(p_1)-\ubar{\lambda})^2$, 
\item $f'(b_2^*-)<0$,
\item $f''(p_2)>0$ for any $p_2\in (b^*_1,b^*_2)$ satisfying $f'(p_2)=0$.
\end{enumerate}
\end{lemma}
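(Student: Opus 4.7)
The plan is to derive a single compact expression for $f'(p)$ on each of the two regions $(b_1^*,b_2^*)$ and $(b_2^*,1)$ by using the ODE \eqref{ODE_Employee} to eliminate $v_{pp}$, and then read off parts (a)--(c) from that expression. Solving \eqref{ODE_Employee} for $v_{pp}(p)$, multiplying by $p(1-p)$, and combining with $f'(p)=(1-2p)v_p(p)+p(1-p)v_{pp}(p)$, the terms $(1-2p)v_p$ and $-2(1-p)v_p$ combine to $-v_p$, yielding
\[
f'(p)=-v_p(p)+\frac{2\bigl(rv(p)-c+(\lambda^*(p)-\ubar{\lambda})^2\bigr)}{\lambda^*(p)^2\,p(1-p)},
\]
valid for $p\in(b_1^*,b_2^*)$ (with $\lambda^*=\bar{\lambda}$) and for $p\in(b_2^*,1)$ (with $\lambda^*=\ubar{\lambda}$). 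From this representation part (a) is immediate: the hypothesis makes the second term non-positive, while Lemma \ref{inc_v} gives $-v_p(p_1)<0$.

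For part (b) the plan is a direct computation at $b_2^*$. Condition \eqref{mainthmcond4} gives $v_p(b_2^*)=(\bar{\lambda}-\ubar{\lambda})/(b_2^*(1-b_2^*)\ubar{\lambda})$, and the explicit form \eqref{v_jump} obtained in Section \ref{sec:obs-cont-thres} gives $v(b_2^*)=c/r-(\bar{\lambda}-\ubar{\lambda})/(\alpha_1(\ubar{\lambda})\ubar{\lambda})$, so $rv(b_2^*)-c=-r(\bar{\lambda}-\ubar{\lambda})/(\alpha_1(\ubar{\lambda})\ubar{\lambda})$. Substituting into the boxed formula for $f'(b_2^*-)$ and factoring out the positive quantity $(\bar{\lambda}-\ubar{\lambda})/(b_2^*(1-b_2^*)\bar{\lambda}^2\ubar{\lambda})$, the remaining bracket reduces (by a short manipulation) to
\[
-\bigl[(\bar{\lambda}-\ubar{\lambda})^2+\ubar{\lambda}^2\bigr]-\frac{2r}{\alpha_1(\ubar{\lambda})},
\]
which is manifestly negative. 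Hence $f'(b_2^*-)<0$, with no further parameter conditions needed.

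For part (c) the plan exploits a cancellation. The stationarity $f'(p_2)=0$ rewrites as $2(rv(p_2)-c+(\bar{\lambda}-\ubar{\lambda})^2)=\bar{\lambda}^2 p_2(1-p_2)v_p(p_2)$; feeding this back into the ODE yields $v_{pp}(p_2)=-(1-2p_2)v_p(p_2)/(p_2(1-p_2))$. Differentiating the boxed expression for $f'$ gives
\[
f''(p)=-v_{pp}(p)+\frac{2rv_p(p)}{\bar{\lambda}^2 p(1-p)}-\frac{2\bigl(rv(p)-c+(\bar{\lambda}-\ubar{\lambda})^2\bigr)(1-2p)}{\bar{\lambda}^2 \bigl(p(1-p)\bigr)^2}.
\]
At $p_2$ the stationarity identity converts the last term into $(1-2p_2)v_p(p_2)/(p_2(1-p_2))$, which exactly cancels $-v_{pp}(p_2)$, leaving $f''(p_2)=2rv_p(p_2)/(\bar{\lambda}^2 p_2(1-p_2))$; positivity then follows from Lemma \ref{inc_v}.

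I expect the main obstacle to be the bookkeeping for part (b): the inequality must be derived without any additional parameter assumption beyond what is already in force, and the cleanest route is the explicit evaluation of $v(b_2^*)$ via \eqref{v_jump} combined with \eqref{mainthmcond4}, after which algebraic simplification reveals the structural negativity $-[(\bar{\lambda}-\ubar{\lambda})^2+\ubar{\lambda}^2]<0$. Parts (a) and (c) are straightforward consequences of the master formula for $f'$ once it has been derived.
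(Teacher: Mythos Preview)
Your proposal is correct and follows essentially the same route as the paper: derive the master identity
\[
f'(p)=-v_p(p)+\frac{2\bigl(rv(p)-c+(\lambda^*(p)-\ubar{\lambda})^2\bigr)}{\lambda^*(p)^2\,p(1-p)},
\]
then read off (a) from Lemma~\ref{inc_v}, evaluate (b) explicitly via \eqref{v_jump} and \eqref{mainthmcond4}, and for (c) differentiate and use $f'(p_2)=0$ together with Lemma~\ref{inc_v}. One small slip in your narrative for (c): after substituting the stationarity identity, the third term becomes $-\frac{(1-2p_2)v_p(p_2)}{p_2(1-p_2)}$ (with a minus sign), which then cancels $-v_{pp}(p_2)=+\frac{(1-2p_2)v_p(p_2)}{p_2(1-p_2)}$; your final conclusion $f''(p_2)=\frac{2rv_p(p_2)}{\bar{\lambda}^2 p_2(1-p_2)}>0$ is correct. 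The paper packages this step as the general identity $f''(p)=\frac{2rv_p(p)}{\bar{\lambda}^2 p(1-p)}-\frac{2f'(p)}{p}$, from which (c) is immediate.
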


\begin{proof}
Let us prove the statement in (a). With the help of the ODE, we observe that

\begin{align}
f'(p)&=v_{pp}p(1-p)+(1-p)v_p-pv_p\notag \\
&=\frac{2(rv-c+(\lambda^*(p)-\ubar{\lambda})^2)}{p(1-p)(\lambda^*)^2(p)}-v_p.\label{dec_lem_help}
\end{align}
Hence, (a) follows from Lemma \ref{inc_v}. Let us prove (b). Using, \eqref{KL:aksldawrv}, \eqref{v_jump}, \eqref{dec_lem_help} and $v\in \mathcal{C}^1(b_1^*,1)$, we find
\begin{align*}
f'(b_2^*-)&=\frac{1}{b_2^*(1-b_2^*)}\left(-\frac{2r(\bar{\lambda}-\ubar{\lambda})}{\alpha_1(\ubar{\lambda})\bar{\lambda}^2\ubar{\lambda}}+\frac{2(\bar{\lambda}-\ubar{\lambda})^2}{\bar{\lambda}^2}-\frac{\bar{\lambda}-\ubar{\lambda}}{\ubar{\lambda}}\right)\\¨
&<\frac{\bar{\lambda}-\ubar{\lambda}}{b_2^*(1-b_2^*)}\left(\frac{2\ubar{\lambda}(\bar{\lambda}-\ubar{\lambda})-\bar{\lambda^2}}{\bar{\lambda}^2\ubar{\lambda}}\right)<0.
\end{align*}
We continue to prove (c). Using \eqref{dec_lem_help}, we find
 \begin{align*}
 f''(p)&=\frac{2rv_p}{p(1-p)\bar{\lambda}^2}-\frac{2(rv-c+(\bar{\lambda}-\ubar{\lambda})^2)(1-2p)}{p^2(1-p)^2\bar{\lambda}^2}-v_{pp}\notag\\
 &=\frac{2rv_p}{p(1-p)\bar{\lambda}^2}-\frac{2(rv-c+(\bar{\lambda}-\ubar{\lambda})^2)(1-2p)}{p^2(1-p)^2\bar{\lambda}^2}+\frac{2v_p}{p}-\frac{2(rv-c+(\bar{\lambda}-\ubar{\lambda})^2)}{p^2(1-p)^2\bar{\lambda}^2}
 \notag\\&=\frac{2rv_p}{p(1-p)\bar{\lambda}^2}-\frac{4(rv-c+(\bar{\lambda}-\ubar{\lambda})^2)}{p^2(1-p)\bar{\lambda}^2}+\frac{2v_p}{p}\\
 &=\frac{2rv_p}{p(1-p)\bar{\lambda}^2}-\frac{2f'(p)}{p}.
\end{align*}
Using $f'(p_2)=0$ and Lemma \ref{inc_v} we thus obtain $f''(p_2)=\frac{2rv_p(p_2)}{p_2(1-p_2)\bar{\lambda}^2}>0$. 
\end{proof}
\begin{proposition}\label{dec_f}
It holds that $f(p)=p(1-p)v_p(p)$ is strictly decreasing for $p\in (b^*_1,1)$. 
\end{proposition}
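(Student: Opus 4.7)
The plan is to split $(b_1^*,1)$ at $b_2^*$, show $f'<0$ on each of the two open subintervals, and then upgrade to strict decrease on the whole interval using continuity of $f$ at $b_2^*$ (which follows from $v\in\mathcal{C}^1(b_1^*,1)$).

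On the upper piece $(b_2^*,1)$ the equilibrium control is $\lambda^*(p)=\ubar{\lambda}$, so $(\lambda^*(p)-\ubar{\lambda})^2=0$ and the hypothesis of Lemma \ref{dec_lem}(a) reduces to $rv(p)\leq c$, which is exactly the content of Lemma \ref{v<c/r}. Hence Lemma \ref{dec_lem}(a) yields $f'(p)<0$ throughout $(b_2^*,1)$, and $f$ is strictly decreasing on that interval.

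The subtle piece is $(b_1^*,b_2^*)$, where $\lambda^*(p)=\bar{\lambda}$ and the hypothesis of Lemma \ref{dec_lem}(a) would require the stronger bound $rv(p)\leq c-(\bar{\lambda}-\ubar{\lambda})^2$, which is not available. I would argue by contradiction using only Lemma \ref{dec_lem}(b) and (c). Suppose the zero set $Z:=\{p\in(b_1^*,b_2^*):f'(p)=0\}$ is non-empty, and set $p^*:=\sup Z$. Since $v\in\mathcal{C}^2(b_1^*,b_2^*)$, $f'$ is continuous on $(b_1^*,b_2^*)$ with a left-limit at $b_2^*$, so $Z$ is closed in $(b_1^*,b_2^*]$. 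If $p^*\in(b_1^*,b_2^*)$, then $f'(p^*)=0$ and Lemma \ref{dec_lem}(c) gives $f''(p^*)>0$, so $f'>0$ on some right neighbourhood of $p^*$; combined with $f'(b_2^*-)<0$ from Lemma \ref{dec_lem}(b) and the intermediate value theorem, this produces a zero of $f'$ in $(p^*,b_2^*)$, contradicting the definition of $p^*$. If instead $p^*=b_2^*$, continuity forces $f'(b_2^*-)=0$, again contradicting Lemma \ref{dec_lem}(b). Hence $Z=\emptyset$, and since $f'$ is continuous and $f'(b_2^*-)<0$, one has $f'<0$ throughout $(b_1^*,b_2^*)$. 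A standard real-analysis fact (a continuous function with negative derivative everywhere except possibly at a single interior point is strictly decreasing) then gives strict decrease of $f$ on all of $(b_1^*,1)$.

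I expect the main obstacle to be precisely the handling of $(b_1^*,b_2^*)$: without the pointwise bound $rv\leq c-(\bar{\lambda}-\ubar{\lambda})^2$, one cannot apply Lemma \ref{dec_lem}(a) directly, and the conclusion must be extracted from the combination of the one-sided bound at $b_2^*-$ (Lemma \ref{dec_lem}(b)) and the strict second-order information at putative zeros (Lemma \ref{dec_lem}(c)). The only routine checks are that $f'$ is indeed continuous up to $b_2^*$ from the left and that the sup of $Z$ lies in the closed interval $(b_1^*,b_2^*]$, both of which are immediate from the regularity of $v$ stated in \eqref{ODE_Employee}.
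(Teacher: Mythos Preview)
Your proposal is correct and follows essentially the same route as the paper: both treat $(b_2^*,1)$ via Lemma~\ref{dec_lem}(a) combined with Lemma~\ref{v<c/r}, and both handle $(b_1^*,b_2^*)$ by a contradiction argument built on Lemma~\ref{dec_lem}(b) and~(c).

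The only notable difference is the orientation of the contradiction on $(b_1^*,b_2^*)$. The paper first observes $f'(b_1^*+)<0$ (from Lemma~\ref{dec_lem}(a), since $rv(b_1^*)=0\le c-(\bar\lambda-\ubar\lambda)^2$), takes a zero $\tilde p$ of $f'$, and uses Lemma~\ref{dec_lem}(c) to argue that $f'\ge 0$ on $(\tilde p,b_2^*)$, which then clashes with $f'(b_2^*-)<0$. You instead anchor only at the right endpoint via $f'(b_2^*-)<0$, take the \emph{supremum} of the zero set, and use Lemma~\ref{dec_lem}(c) together with the intermediate value theorem to manufacture a larger zero. Your version is slightly more economical in that it never invokes Lemma~\ref{dec_lem}(a) on the lower interval (and hence does not need the inequality $c\ge(\bar\lambda-\ubar\lambda)^2$ at that step), while the paper's version makes the sign of $f'$ near $b_1^*$ explicit. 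Either direction yields the result.
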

\begin{proof}
By Lemma \ref{dec_lem}(a), the statement holds for $p\in (b^*_2,1)$, since $v< \frac{c}{r}$ (Lemma \ref{v<c/r}) and 
$\lambda^*-\ubar{\lambda}=0$ on $(b^*_2,1)$. We prove $f'(p)<0$ for $p\in(b_1^*,b^*_2)$ by contradiction. 
Note that $f'(b_1^*+)<0$, by $0=r v(b^*_1)\leq c-(\bar{\lambda}-\ubar{\lambda})^2$ and Lemma \ref{dec_lem}(a). 
For this purpose, let $\tilde{p}\in (b^*_1,b^*_2)$ be a point such that $f'(\tilde{p})=0$, which is then a local minimum (by Lemma \ref{dec_lem}(c)).  
Hence, we obtain $f'(p)\geq 0$ for $p\in(\tilde{p},b_2^*)$ (cf. Lemma \ref{dec_lem}(c)).
This is a contradiction to Lemma \ref{dec_lem}(b).
\end{proof}
\begin{lemma}\label{prop_b_uppar_bound}
It holds that $b^*_1<c/\bar{\lambda}$.
\end{lemma}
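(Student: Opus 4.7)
The weak inequality $b_1^* \leq c/\bar\lambda$ has already been derived inside the proof of Theorem \ref{ver_theorem} (see equation \eqref{parameters-conditions-forNE}), so the only thing to prove is that equality cannot occur. The plan is a short contradiction argument based on expanding $u$ to third order at $b_1^*+$.

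First I would record the relevant facts at the left endpoint: the boundary condition in \eqref{u_ODE} gives $u(b_1^*)=0$, the smooth-fit condition \eqref{mainthmcond2} gives $u_p(b_1^*+)=0$, and evaluating the ODE \eqref{u_ODE} at $p=b_1^*+$ therefore yields the explicit formula
\begin{align*}
u_{pp}(b_1^*+)=\frac{2(c-b_1^*\bar\lambda)}{\bar\lambda^2 (b_1^*)^2(1-b_1^*)^2}.
\end{align*}
Combined with \eqref{mainthmcond1} ($u\geq 0$) and the fact that $u$ touches zero with zero derivative at $b_1^*$, one must have $u_{pp}(b_1^*+)\geq 0$; this reproduces $b_1^*\leq c/\bar\lambda$.

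Next, suppose for contradiction that $b_1^*=c/\bar\lambda$. Then the display above gives $u_{pp}(b_1^*+)=0$, so $u$ vanishes to at least order three at $b_1^*$ from the right. I would then differentiate the first ODE of \eqref{u_ODE} once in $p$ and evaluate at $b_1^*+$; using $u_p(b_1^*+)=u_{pp}(b_1^*+)=0$, everything cancels except the terms involving $u_{ppp}$ and the derivative of the non-homogeneity $p\bar\lambda-c$, yielding
\begin{align*}
\frac{\bar\lambda^2 (b_1^*)^2(1-b_1^*)^2}{2}u_{ppp}(b_1^*+) + \bar\lambda = 0,
\end{align*}
hence $u_{ppp}(b_1^*+)<0$.

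Finally, Taylor expanding $u$ to third order at $b_1^*$ gives $u(p)=\tfrac{1}{6}u_{ppp}(b_1^*+)(p-b_1^*)^3+o((p-b_1^*)^3)<0$ for $p$ slightly larger than $b_1^*$, contradicting \eqref{mainthmcond1}. Therefore the case $b_1^*=c/\bar\lambda$ is impossible and the strict inequality $b_1^*<c/\bar\lambda$ holds. The only delicate point is the third-derivative computation; it is routine, but one must be careful that the $u_{pp}$ terms produced by differentiating the ODE really vanish at $b_1^*+$ under the standing equality assumption, which they do because we have already forced $u_{pp}(b_1^*+)=0$.
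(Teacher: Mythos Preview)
Your Taylor-expansion argument is locally correct, but it is circular in the logical flow of the paper. The lemma sits in Appendix~\ref{app:ODE-exist}, whose standing assumptions are explicitly only \eqref{mainthmcond2} and \eqref{mainthmcond4} (see the first paragraph of that appendix). Condition \eqref{mainthmcond1}, i.e.\ $u\ge 0$, is \emph{not} available here: it is the conclusion of Proposition~\ref{u>0_prop}, and Proposition~\ref{u>0_prop} in turn invokes Lemma~\ref{prop_b_uppar_bound} in its very first line. Both your reduction to the weak inequality (citing \eqref{parameters-conditions-forNE}, which was derived in the proof of Theorem~\ref{ver_theorem} \emph{from} \eqref{mainthmcond1}) and your final contradiction step rely on $u\ge 0$, so the argument as written assumes what it is ultimately used to prove.

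The paper avoids this by arguing directly from the explicit solution formula: assuming $b_1^*\ge c/\bar\lambda$, it uses Lemma~\ref{c_1 lemma}(b) to bound the constant $c_1$ and hence obtain the explicit strict lower bound \eqref{lower_bound_up} for $u_p(b_1^*+)$; that lower bound is then shown to be nonnegative for all $b_1^*\ge c/\bar\lambda$ using only the parameter condition \eqref{existence_thm_cond}, contradicting $u_p(b_1^*+)=0$. This route uses only \eqref{mainthmcond2} and the explicit form of $u$, never $u\ge 0$. If you want to salvage your approach, you would need an independent reason (not \eqref{mainthmcond1}) why $u$ cannot dip below zero immediately to the right of $b_1^*$; absent that, the paper's computational route is the natural fix.
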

\begin{proof}
We prove the statement by contradiction. To this end assume $b^*_1 \geq c/\bar{\lambda}$. Then we can use the calculations in the proof of Lemma \ref{lim_b_lemma} to arrive at \eqref{lower_bound_up}, which with \eqref{mainthmcond2} gives us
\begin{align}
0=u_p(b^*_1)&>\frac{c}{b_1^*r}+\frac{b_1^*\left(\alpha_1(\bar{\lambda})\left(\bar{\lambda}-\frac{c}{b_1^*}\right)-\alpha_1(\ubar{\lambda})(\bar{\lambda}-\ubar{\lambda})\right)}{b_1^*(1-b_1^*)r}\notag\\
&=\frac{(1-\alpha_1(\bar{\lambda}))c+b_1^*\left(\alpha_1(\bar{\lambda})\bar{\lambda}-\alpha_1(\ubar{\lambda})(\bar{\lambda}-\ubar{\lambda})-c\right)}{b_1^*(1-b_1^*)r}.\label{lower_bound_up_help}
\end{align}
Related to the numerator we introduce the function
\begin{align*}
f(b)=(1-\alpha_1(\bar{\lambda}))c+b\left(\alpha_1(\bar{\lambda})\bar{\lambda}-\alpha_1(\ubar{\lambda})(\bar{\lambda}-\ubar{\lambda})-c\right).
\end{align*}
It is easily verified that $f$ is increasing and that $f\left(c/\bar{\lambda}\right)\geq 0$ (use e.g., \eqref{existence_thm_cond}). This is a contradiction to $u_p(b^*_1)=0$ and the statement follows.
\end{proof}

\begin{proposition}\label{u>0_prop}
It holds that $u(p)\geq 0$ for $p\in [0,1]$.
\end{proposition}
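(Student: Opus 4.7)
The plan is to argue by contradiction. Suppose there exists $p^\star\in[0,1]$ with $u(p^\star)<0$. Since $u\equiv 0$ on $[0,b_1^*]$ and $u(1)=(\ubar{\lambda}-c)/r>0$ (because $\ubar{\lambda}>c$), any such $p^\star$ must lie in $(b_1^*,1)$.

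First I would show that $u>0$ on some right-neighborhood of $b_1^*$. Combining $u(b_1^*)=0$, the smooth-fit condition \eqref{mainthmcond2}, and the ODE in \eqref{u_ODE} at $b_1^*+$ yields
\[
u_{pp}(b_1^*+)=\frac{2(c-b_1^*\bar{\lambda})}{\bar{\lambda}^2(b_1^*)^2(1-b_1^*)^2},
\]
which is strictly positive by Lemma \ref{prop_b_uppar_bound}; a second-order Taylor expansion then yields $u(p)>0$ just to the right of $b_1^*$. By continuity, together with $u(b_1^*)=0$, $u(1)>0$, and the hypothesized existence of a negative value, there is a connected component $(q_1,q_2)\subset(b_1^*,1)$ of $\{u<0\}$ with $u(q_1)=u(q_2)=0$. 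The minimum of $u$ on this component is attained at some $p_0\in(q_1,q_2)$; $\mathcal{C}^1$-regularity gives $u_p(p_0)=0$, and the local-minimum condition gives $u_{pp}(p_0)\geq 0$ (one-sided if $p_0=b_2^*$). Substituting into the ODE yields $r u(p_0)\geq p_0\lambda^*(p_0)-c$, and since $u(p_0)<0$, this forces $p_0<c/\lambda^*(p_0)$.

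To close the argument I would exploit the maximum principle for the auxiliary function $F:=u-\psi_*$, where $\psi_*(p):=(p\lambda^*(p)-c)/r$ is the piecewise particular solution of the inhomogeneous ODE: on each smooth piece of $(b_1^*,b_2^*)$ and $(b_2^*,1)$, $F$ solves the homogeneous linear equation $F''=c(p)F$ with $c(p):=2r/(\lambda^{*2}(p)p^2(1-p)^2)>0$, and no nontrivial solution of this equation can attain an interior strict negative minimum because $F''=cF<0$ at such a point would contradict $F''\geq 0$. On $(b_2^*,1)$ one also has $F(1)=F_p(1)=0$, which by the classical one-zero property forces $F$ to have constant sign on $(b_2^*,1)$; combined with $F(b_1^*)=(c-b_1^*\bar{\lambda})/r>0$, the corresponding sign tracking on $(b_1^*,b_2^*)$, and the $\mathcal{C}^1$-matching of $u$ at $b_2^*$, one obtains bounds $u\geq \psi_*$ on each smooth piece that, evaluated at $p_0$, contradict $u(p_0)<0$ given the location $p_0<c/\lambda^*(p_0)$ established above.

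I expect the main obstacle to be threading these maximum-principle arguments across the switching point $b_2^*$ at which $\psi_*$ jumps downward by $b_2^*(\bar{\lambda}-\ubar{\lambda})/r>0$ while $u$ remains $\mathcal{C}^1$; this forces $F$ to jump upward at $b_2^*$, and the case analysis depending on whether $(q_1,q_2)$ lies entirely in $(b_1^*,b_2^*)$, entirely in $(b_2^*,1)$, or straddles $b_2^*$ requires careful handling of the explicit jump size together with the sign of $c_3$ (and hence of $F$) on $(b_2^*,1)$.
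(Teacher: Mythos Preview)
Your opening (localizing to $(b_1^*,1)$, the second-order expansion at $b_1^*$ using Lemma \ref{prop_b_uppar_bound}, and the observation that at an interior negative minimum $p_0$ one has $ru(p_0)\geq p_0\lambda^*(p_0)-c$, hence $p_0<c/\lambda^*(p_0)$) is correct and matches the paper's setup. The problem is the closing step: the maximum-principle argument for $F=u-\psi_*$ can at best deliver $u\geq\psi_*$, and this does \emph{not} contradict $u(p_0)<0$. Indeed, $p_0<c/\lambda^*(p_0)$ says precisely that $\psi_*(p_0)<0$, so the inequality $u(p_0)\geq\psi_*(p_0)$ is perfectly compatible with $u(p_0)<0$. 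In other words, you have localized the putative negative minimum to the region where the lower barrier $\psi_*$ is itself negative, and then tried to use that very barrier to rule it out; no contradiction arises. (Also note $F_p(b_1^*+)=-\bar\lambda/r<0$, so $F$ immediately decreases from its positive value at $b_1^*$; even proving $F\geq0$ on $(b_1^*,b_2^*)$ is not automatic from the maximum principle alone.)

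The paper closes the argument differently. It does not work with a minimum of $u$ but with the \emph{first} zero $\tilde p\in(b_1^*,1]$ of $u$ and shows $u_p(\tilde p)>0$, which contradicts $\tilde p$ being a first return to $0$ from the positive side. The key observation is that, once $u(\tilde p)=0$, the restriction of $u$ to $(\tilde p,1)$ solves the same boundary value problem as $u(\cdot;\tilde p,b_2^*)$ (respectively, the explicit one-piece problem on $(\tilde p,1)$ when $\tilde p\geq b_2^*$), so that the algebraic lower bound \eqref{lower_bound_up} from the proof of Lemma \ref{lim_b_lemma} applies with $\tilde p$ in place of $b_1^*$; together with \eqref{existence_thm_cond} and $\tilde p>c/\bar\lambda$ this forces $u_p(\tilde p)>0$. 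If you want to rescue your approach, you would need an additional barrier that is nonnegative on the region $\{p<c/\lambda^*(p)\}$, not $\psi_*$ itself.
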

\begin{proof}
 We prove that $u(p)> 0$ for $p\in (b^*_1,1]$ by contradiction. By definition, $u(p)=0$ for $p\leq b^*_1$.
Lemma \ref{prop_b_uppar_bound} establishes $b^*_1< c/\bar{\lambda}$. Thus the ODE \eqref{u_ODE} and $u_p(b^*_1+)=0$ imply that $u(p)$ is strictly increasing and convex on
$$
\{p\in(b^*_1,1):ru(\hat{p})+c-\hat{p}\lambda^*(\hat{p})>0, \forall \hat{p}\in (b^*_1,p) \},
$$ 
which is non-empty. Suppose, in order to obtain a contradiction, that $\tilde{p}\in (b^*_1,1]$ is the smallest point such that $u(\tilde{p})=0$. It can then be verified that $\tilde{p} > c/\bar{\lambda}$. 

Let us consider the case $\tilde{p}<b^*_2$. Recall that \eqref{lower_bound_up} holds for any $c/\bar{\lambda} \leq b_1^*<b_2^*<1$. 
Hence, using $\tilde p$ instead of $b^*_1$ in \eqref{lower_bound_up} and the same reasoning that lead to \eqref{lower_bound_up_help} gives 
\begin{align*}
u_p(\tilde p+; \tilde p, b_2^*) > \frac{(1-\alpha_1(\bar{\lambda}))c+\tilde{p}\left(\alpha_1(\bar{\lambda})\bar{\lambda}-\alpha_1(\ubar{\lambda})(\bar{\lambda}-\ubar{\lambda})-c\right)}{\tilde{p}(1-\tilde{p})r}.
\end{align*}
Note that that we have $u_p(\tilde p+; \tilde p, b_2^*)=u_p(\tilde p; b_1^*, b_2^*):=u_p(\tilde p)$, since both functions 
$u(\cdot; \tilde p, b_2^*)$
and 
$u(\cdot; b_1^*, b_2^*)$ satisfy the ODE in \eqref{u_ODE} on $(\tilde p,1)$ with the same boundary conditions, in particular 
$u(\tilde p; \tilde p, b_2^*)=0$ (by \eqref{u_ODE}) and
$u(\tilde p; b_1^*, b_2^*)=0$ (by the contradiction assumption).  
Hence, using $\tilde{p} > c/\bar{\lambda}$ and arguments analogous to those after \eqref{lower_bound_up_help} we find that $u_p(\tilde{p})> 0$.  
However, $u_p(\tilde{p})> 0$ is a contradiction to the definition of $\tilde{p}$ being the smallest point in $(b^*_1,1]$ where $u(\tilde{p})=0$.

Let us consider the case $\tilde{p}\geq b^*_2$. Then the contradiction $u_p(\tilde{p})> 0$ is obtained in a similar way. More precisely, using the ODE (cf. \eqref{value_employer}) with the boundary conditions $u(\tilde{p})=0$ and $u(1)=\frac{\ubar{\lambda}-c}{r}$, we obtain
\begin{align*}
u(p)=c_3p\left(\frac{1-p}{p}\right)^{\alpha_1(\ubar{\lambda})}+\frac{p\ubar{\lambda}-c}{r},\quad \text{for }p>\tilde{p},
\end{align*}
where 
\begin{align*}
c_3=-\frac{\tilde{p}\ubar{\lambda}-c}{\tilde{p}r}\left(\frac{1-\tilde{p}}{\tilde{p}}\right)^{-\alpha_1(\ubar{\lambda})}.
\end{align*}
Using also $\tilde{p} > c/\bar{\lambda}$, \eqref{existence_thm_cond}, and arguments analogous to those after \eqref{lower_bound_up_help}, 
we find with some work that 
\begin{align*}
u_p(\tilde{p})=\frac{c}{\tilde{p}r}+\frac{\alpha_1(\ubar{\lambda})(\tilde{p}\ubar{\lambda}-c)}{\tilde{p}(1-\tilde{p})r}>0.
\end{align*}

\end{proof}

\bibliographystyle{abbrv}
\bibliography{Bibl}

\end{document}